\tikzstyle{myedgestyle} = [-latex]
\newtheorem{thm}{Theorem}[section]
\newtheorem{lem}[thm]{Lemma}
\newtheorem{cor}[thm]{Corollary}
\newtheorem{mydef}{Definition}[section]
\newtheorem{rem}{Remark}[section]
\theoremstyle{remark}
\newcommand\bb[1]{\mathbf{#1}}
\newcommand\ddfrac[2]{\frac{\displaystyle #1}{\displaystyle #2}}
\newcommand\bint[1]{\displaystyle\int #1}
\def\dt{{\Delta t}}
\begin{document}

\author{  Sr\dj{}an Trifunovi\'c\thanks{School of Mathematical Sciences,
    Shanghai Jiao Tong University, Shanghai 200240, China,
    email: sergej1922@gmail.com, tarathis@sjtu.edu.cn} \and Ya-Guang Wang\thanks{School of Mathematical Sciences,
    MOE-LSC and SHL-MAC, Shanghai Jiao Tong University, Shanghai 200240,  China,
    email: ygwang@sjtu.edu.cn} }

\title{Existence of a weak solution to the fluid-structure interaction problem in 3D}

\maketitle
\abstract{{\footnotesize We study a nonlinear fluid-structure interaction problem in which the fluid is described by the three-dimensional incompressible Navier-Stokes equations, and the elastic structure is modeled by the nonlinear plate equation which includes a generalization of Kirchhoff, von K\'{a}rm\'{a}n and Berger plate models. The fluid and the structure are fully coupled via kinematic and dynamic boundary conditions. The existence of a weak solution is obtained by designing a hybrid approximation scheme that successfully deals with the nonlinearities of the system. We combine time-discretization and operator splitting to create two sub-problems, one piece-wise stationary for the fluid and one in the Galerkin basis for the plate. To guarantee the convergence of approximate solutions to a weak solution, a sufficient condition is given on the number of time discretization sub-intervals in every step in a form of dependence with number of the Galerkin basis functions and nonlinearity order of the plate equation.}}
\\ \\
{\footnotesize \textbf{Keywords and phrases:} {fluid-structure interaction, incompressible viscous fluid, nonlinear plate, three space variables, weak solution}
\\ \\
{\footnotesize \textbf{AMS Mathematical Subject classification (2010):} {35Q30, 35M13, 35D30, 74F10}
\section{Introduction}

In recent years there have been many works in the study of mathematical theory  of fluid-structure interaction (FSI) problems. These problems  arise from research fields like hydroelasticity, aeroelasticity, biomechanics, blood flow modeling and so on.
Chambolle et. al. in \cite{time} obtained the existence of a weak solution to the problem of viscous incompressible fluid and viscoelastic plate interaction in 3D, and Grandmont (\cite{grandmont3}) studied the limiting problem when viscoelasticity coefficient tends to zero. In [23 - 27] Muha and {\v C}ani{\'c} obtained the existence of weak solutions to several FSI problems by using the time discretization via operator splitting method. In \cite{BorSun} they studied the interaction in 2D case, and in \cite{multilayer,BorSunNonLinear} the 3D cylindrical case where the structure is described by linear and nonlinear Koiter shell equations, respectively. In \cite{multilayer2}, they observed a multi-layer structure of blood vessel in 2D,  and in \cite{BorSunNavierSlip}, they studied the 2D model with Navier-slip condition on the fluid-structure interface, where some additional difficulties due to the loss of the trace regularity of unknowns were successfully tackled.

In this paper, we study the interaction problem of viscous incompressible fluid and an elastic structure, which is modeled by a nonlinear plate law that precisely describes certain nonlinear phenomena in plate dynamics. This plate model is an extension of the one studied by Chueshov in \cite{igor,igor2} in the sense that certain higher order derivatives of displacement are allowed to be in the nonlinear term of the plate equation. In his work, Chueshov studied a plate model which is a generalization of Kirchhoff, von K\'{a}rm\'{a}n and Berger plate models, and got the well-posedness of the fluid-structure interaction problem where fluid is described by linearized compressible Navier-Stokes or linearized Euler equations, respectively. Unfortunately, the methods used in \cite{igor,igor2} strongly rely on the linearity of the fluid equations. We need to develop a different approach for our nonlinear problem, which was inspired from the approaches for nonlinear plates and fluid-structure interaction problems. For this reason we constructed a novel \textbf{hybrid approximation scheme} that deals with the nonlinearities both in fluid and plate equations by using the time discretization via the operator splitting method and the Galerkin approximation for the plate. We then prove that when the number of time discretization sub-intervals in every step is sufficiently large (compared to the number of Galerkin basis functions, corresponding eigenvalues and some other parameters), we obtain the convergence of a subsequence of approximate solutions to a weak solution of the original nonlinear problem.

\section{Preliminaries and main result}
In this section, we will first describe the model and derive the energy equality in the classical sense. After that, we introduce the domain transformation (the LE mapping) which is used in redefining the problem on a fixed domain. At the end of the section, we derive the equation in a weak form, give the definition of weak solutions and state the main result.
\subsection{Model description}
Here we deal with the incompressible, viscous fluid interacting
with nonlinear plate. The plate displacement is described by a scalar function $\eta:\Gamma\to \mathbb{R}$, where $\Gamma \subset \mathbb{R}^2$ is a connected bounded domain with Lipschitz boundary. The fluid fills the domain between side walls, the plate and the bottom, i.e.
\begin{eqnarray*}
\Omega^\eta(t) = \{ (X,z) : X \in \Gamma, -1< z < \eta(t,X)\},
\end{eqnarray*}
We will denote the graph of $\eta$ as $\Gamma^\eta (t) = \{ (X,z) : X \in \Gamma, z=\eta(t, X) \}$ and the
side wall of the domain as $W = \{(X,z): X \in \partial \Gamma, -1<z<0\}$, where the plate boundary is assumed to be fixed as $z=0$ for all $x \in \partial \Gamma$. The entire rigid part of the boundary  $\partial \Omega^\eta(t)$ will be denoted as $\Sigma =
(\Gamma \times \{-1\}) \cup W $.\\

The plate displacement $\eta(t,X)$ will be described by
the nonlinear plate law:
\begin{eqnarray}\label{thefirstplate}
\partial_t^2 \eta+\mathcal{F}(\eta)+\Delta^2 \eta =f,
\end{eqnarray}
where $f$ is the force density in vertical direction that comes
from the fluid and $\mathcal{F}$ is a nonlinear function corresponding the nonlinear elastic force in various plate models (see assumptions given in \eqref{assumption0}-\eqref{assumption2}). The plate is considered to be clamped
\begin{eqnarray}\label{clamped}
\eta(t, x)=0,~~ \partial_\nu \eta(t, x)=0, ~~\text{for all} ~~x \in
\partial\Gamma, t \in (0,T),
\end{eqnarray}
where $\nu$ is the normal vector on $\partial \Gamma$, and supplemented with initial data
\begin{eqnarray}\label{initialdata1}
 \eta(0,\cdot) = \eta_0,~~\partial_t \eta(0,\cdot) = v_0.
\end{eqnarray}

The \textbf{fluid motion} is described by the Navier-Stokes
equations
\begin{eqnarray}\label{NEq}
\begin{rcases}
\partial_t \mathbf{u} + \mathbf{u} \cdot \nabla \mathbf{u} =  \nabla \cdot
\sigma , \\
~~~~~~~~~~\nabla \cdot \mathbf{u} =  0 ,
\end{rcases}
\text{ in } \Omega^\eta(t),~ t\in (0,T).
\end{eqnarray}
where $\bb{u}$ is the velocity of the
fluid, $\mathbf{\sigma} = -pI + 2\mu \mathbf{D}(\mathbf{u})$, $\mu$ is the kinematic
viscosity coefficient and $\mathbf{D}(\mathbf{u})= \frac{1}{2}(\nabla \mathbf{u}+ (\nabla
\mathbf{u})^{\tau})$. The boundary condition on $\Sigma$ for the velocity $\bb{u}$ is no-slip
\begin{equation}
\mathbf{u}=0, ~~\text{on}~~ \Sigma,
\end{equation}
and we are supplemented with the initial data
\begin{eqnarray}\label{initialdata2}
\mathbf{u}(0, \cdot) = \mathbf{u}_0.
\end{eqnarray}

The \textbf{coupling} between the fluid and plate is defined
by two sets of boundary conditions on $\Gamma^\eta(t)$. In the
Lagrangian framework, with $ X \in \Gamma$ and $t\in (0,T)$, they
read as:
\begin{itemize}
\item The kinematic condition:
\begin{eqnarray}\label{kincond}
\partial_t \eta(t,X) \mathbf{e_3}= \mathbf{u}(t,X,\eta(t,X))
\end{eqnarray}
where  $\mathbf{e_3}=(0,0,1)$.
\item The dynamic condition:
\begin{eqnarray}\label{dyncond}
f(t,X) = -S^\eta (\mathbf{\sigma} \nu^\eta) \cdot \mathbf{e_3}.
\end{eqnarray}

Here, $\nu^\eta$ is the unit normal vector on the boundary $\Gamma^\eta$, and $S^\eta(t,X)$ is the Jacobian of the transformation from Eulerian to Lagrangian coordinates of the plate
\begin{eqnarray*}
S^\eta(t,X)=\sqrt{1+\partial_x \eta(t,X)^2+\partial_y
\eta(t,X)^2},
\end{eqnarray*}
with $X=(x,y)$. 
\end{itemize}

With $\eqref{dyncond}$, the plate equation $\eqref{thefirstplate}$ then becomes:
\begin{eqnarray}\label{plateeq}
\partial_{t}^2\eta+\mathcal{F}(\eta)+\Delta^2 \eta =-S^\eta (\mathbf{\sigma} \nu^\eta) \cdot \mathbf{e_3},
\end{eqnarray}
The initial data given in $\eqref{initialdata1}$ and $\eqref{initialdata2}$ are assumed to satisfy the following compatibility conditions:\

\begin{align}
\mathbf{u_0} \in L^2(\Omega^{\eta_0})^3, \nabla \cdot \mathbf{u_0} = 0, &\text{ in }
\Omega^{\eta_0},\nonumber \\
\mathbf{u_0} \cdot \nu =0, &\text{ on } \Sigma, \nonumber \\
\mathbf{u_0}(X,\eta_0(X)) = v_0(X) \cdot \mathbf{e_3}, &\text{ on } \Gamma,\label{initialconditions} \\
\partial_\nu\eta_0(X)=\eta_0(X) = v_0(X)=0, &\text{ on } \partial \Gamma, \nonumber \\
\eta_0(X)>-1, &\text{ on } \Gamma, \nonumber
\end{align}
where $\Omega^{\eta_0}=\Omega^\eta(0)$.

\subsection{The energy identity for smooth solutions}
In order to define the weak formulation of the problem $\eqref{thefirstplate}$-$\eqref{initialconditions}$, we need to choose the functional setting. Our bounds will come from the energy inequality of the approximated system, and it will dictate the regularity of solutions.

We can derive it for smooth solutions in the following way. We multiply $\eqref{plateeq}$ with $\partial_t \eta$ and integrate over $\Gamma$. Then we multiply the equation $\eqref{NEq}$ with $\bb{u}$, and integrate over $\Omega^\eta(t)$. We sum these two equalities, integrate it over $(0,t)$, and by partial integration we obtain:
\begin{eqnarray}\label{eneq}
E(t)+\mu \int_0^t ||\bb{D}(\bb{\tau})||_{L^2(\Omega^\eta(\tau))}^2 d\tau+\int_0^t(\mathcal{F}(\eta(\tau)), \partial_t \eta(\tau)) d\tau = E(0),
\end{eqnarray}
where
\begin{eqnarray*}
E(t) := \frac{1}{2}\Big(||\bb{u}(t)||_{L^2(\Omega^\eta(t))}^2+||\eta(t)||_{H^2(\Gamma)}^2+||\partial_t \eta(t)||_{L^2(\Gamma)}^2\Big)
\end{eqnarray*}
Even though the term in $\eqref{eneq}$ with $\mathcal{F}$ isn\rq{}t necessarily positive, in the  case we will study, we will be able to bound it from bellow in a suitable way (see (A3) on page 6). Therefore, we will use this equality as an inspiration for the choice of function spaces in the following discussion.

\subsection{LE mapping of the domain and functional setting} 

In order to define the problem on the fixed domain
\begin{eqnarray*}
\Omega =  \{ (X,z) : X \in \Gamma, -1< z < 0 \},
\end{eqnarray*}
we define a family of the following Lagrangian Eulerian (LE) transformations:
\begin{eqnarray*}
A_\eta(t) : &&\Omega \to \Omega^\eta(t),\\
&&(X,z) \mapsto (X,(z+1)\eta(t,X)+z).
\end{eqnarray*}
This mapping is a bijection and its Jacobian, defined by
\begin{eqnarray*}
J^\eta(t,X,z):=\text{det} \nabla A_\eta(t,X,z) = 1+\eta(t,X),
\end{eqnarray*}
is well-defined as long as $\eta(t,X)>-1$ for any $X \in \Gamma$. We also define the LE velocity
\begin{eqnarray}
&\mathbf{w}^\eta: = \ddfrac{d}{dt}A_\eta = (z+1)\partial_t \eta \mathbf{e_3},&\label{ALEeq3} 
\end{eqnarray}
In order to define the weak solution on the fixed domain $\Omega$, we
need to transform the problem $\eqref{thefirstplate}$-$\eqref{initialconditions}$ to be defined on $\Omega$ by using $A_\eta$. For an arbitrary vector function  $\bb{f}$  defined  on $\Omega^\eta (t)$, we define $\bb{f}^\eta$ on $\Omega$ as pullback of $\bb{f}$ by $A_\eta$, i.e
\begin{eqnarray*}
\bb{f}^\eta(t,X,z):=\bb{f}(t,  A_\eta(t,X,z)), ~~ \text{ for } (X,z) \in \Omega 
\end{eqnarray*}
the gradient as the push forward by $A_\eta$
\begin{eqnarray}
& \nabla^\eta \mathbf{f}^\eta:=(\nabla \mathbf{f})^\eta = \nabla \mathbf{f}^\eta (\nabla A_\eta)^{-1},\label{ALEeq2}
\end{eqnarray}
with
\begin{eqnarray}
&(\nabla A_\eta)^{-1} = [\bb{e}_1, \bb{e}_2, \overline{A}_\eta ], ~~~~
\overline{A}_\eta = \ddfrac{1}{\eta+1}[-(z+1)\partial_x \eta, -(z+1)\partial_y \eta, 1]^T,
\end{eqnarray}
the symmetrized gradient as
\begin{eqnarray*}
\mathbf{D}^\eta(\mathbf{f}^\eta):=\frac{1}{2}(\nabla^\eta \mathbf{f}^\eta+
(\nabla^\eta \mathbf{f}^\eta)^\tau ),\label{ALEeq5}
\end{eqnarray*}
and the LE derivative as a time derivative on the fixed domain $\Omega$
\begin{eqnarray}
&\partial_t \mathbf{f}_{|\Omega} := \partial_t \mathbf{f} + (\mathbf{w}^\eta \cdot \nabla)\mathbf{f}. & \label{ALEeq1} 
\end{eqnarray}
Now we write the Navier-Stokes equations in LE formulation by using the LE mapping,
\begin{eqnarray}\label{NStransformed}
\partial_t \mathbf{u}_{|\Omega} + (\bb{u} - \mathbf{w}^\eta)  \cdot\nabla \mathbf{u}&=& \nabla \cdot \sigma,~~ \text{ in } \Omega^\eta(t)
\end{eqnarray}
where $\partial_t \bb{u}_{|\Omega}$ and $\bb{w}^\eta$ are composed with the $(A_\eta(t))^{-1}$, and the transformed divergence free condition
\begin{eqnarray}\label{divfree}
\nabla^\eta \cdot \bb{u}^{\eta} &=& 0, ~ \text{ in } \Omega.
\end{eqnarray}

From the energy equality $\eqref{eneq}$, the possible regularity of $\eta$ is $H_0^2(\Gamma)$, and since $H_0^2(\Gamma)$ is embedded into the H\"{o}lder space $C^{0,\alpha}$ for $\alpha<1$, $\Omega^\eta(t)$ doesn\rq{}t necessarily have the Lipschitz boundary. For this lower regularity boundary we define the \lq\lq{}Lagrangian\rq\rq{} trace operator as
\begin{eqnarray*}
\gamma|_{\Gamma(t)} :&& C^1 \overline{(\Omega^\eta(t))}\to C( \Gamma), \\
 &&f \mapsto f(t,X, \eta(t,X)).
\end{eqnarray*}
In \cite{time, igor2, Boris}, it was proved that by continuity we can extend this trace operator $\gamma$ to be a linear operator from $H^1(\Omega^\eta(t))$ to $H^s(\Gamma)$, $0 \leq s<1/2$.  We first define the fluid velocity space
\begin{eqnarray*}
V_F := \{ \mathbf{u}= (u_1, u_2,u_3) \in C^1(\overline{\Omega^\eta(t)}): \nabla \cdot \mathbf{u}= 0, ~ \mathbf{u}= 0 ~ \text{ on } \Sigma \}
\end{eqnarray*}
and its following closure
\begin{eqnarray*}
\mathcal{V}_F: = \overline{V_F}^{H^1(\Omega^\eta(t))}.
\end{eqnarray*}
It is easy to have the following characterization (see  \cite{time, igor2}):
\begin{eqnarray*}
\mathcal{V}_F = \{ \mathbf{u} = (u_1, u_2,u_3) \in H^1(\overline{\Omega^\eta(t)}): \nabla \cdot \mathbf{u}= 0, ~ \mathbf{u}= 0 ~ \text{ on } \Sigma \}.
\end{eqnarray*}
The transformation of the domain, $A_\eta$, isn\rq{}t necessarily Lipschitz, so the transformed velocity $\bb{u}^\eta$ may not be in $H^1(\Omega)$. The transformed velocity space is then defined as
\begin{eqnarray*}
\mathcal{V}_F^\eta := \{\bb{u}^\eta: \bb{u} \in \mathcal{V}_F  \}
\end{eqnarray*}
Notice that any function $\bb{u}^\eta \in \mathcal{V}_F^\eta$ satisfies the transformed divergence free condition $\eqref{divfree}$ rather than the regular divergence free condition. When the Jacobian $J = \eta+1>0$, the inner product in $\mathcal{V}_F^\eta$ is defined as:
\begin{eqnarray*}
(\bb{f}^\eta,\bb{g}^\eta)_{\mathcal{V}_F^\eta}: = (\bb{f},\bb{g})_{H^1(\Omega^\eta (t))}.
\end{eqnarray*}
Now, we define the corresponding function space for fluid velocity involving time 
\begin{eqnarray*}
\mathcal{W}_F^\eta (0,T): =  L^\infty (0,T; L^2(\Omega)) \cap
L^2 (0,T; \mathcal{V}_F^\eta),
\end{eqnarray*}
while for the structure we choose classical space
\begin{eqnarray*}
\mathcal{W}_S(0,T) := W^{1, \infty}(0,T; L^2(\Gamma))\cap
L^\infty (0,T; H_0^2(\Gamma)). 
\end{eqnarray*}
We include the kinematic condition in the solution space
\begin{eqnarray*}
\mathcal{W}^\eta (0,T) :=  \{ (\bb{u}^\eta,\eta) \in
\mathcal{W}_F^\eta (0,T) \times \mathcal{W}_S(0,T):
\mathbf{u}^\eta|_\Gamma = \partial_t \eta \mathbf{e_3} \}
\end{eqnarray*}
and similarly for the corresponding test function space
\begin{eqnarray*}
Q^\eta (0,T) :=  \{ (\mathbf{q}^\eta, \psi) \in C_c^1 ([0,T);
\mathcal{V}_F^\eta \times H_0^2(\Gamma)) : \mathbf{q}^\eta|_\Gamma=
\psi(t,z) \mathbf{e_3} \}.
\end{eqnarray*}

Before the end of this subsection, let us impose the following 
assumptions on the nonlinear term $\mathcal{F}$ of the plate equation:

\begin{enumerate}
\item[(A1)] Mapping $\mathcal{F}$ is locally Lipschitz from $H_0^{2-\epsilon}(\Gamma)$ into $H^{-2}(\Gamma)$ for some $\epsilon >0$, i.e.
\begin{eqnarray}\label{assumption0}
||\mathcal{F}(\eta_1)-\mathcal{F}(\eta_2)||_{H^{-2}} \leq
C_R || \eta_1 -\eta_2 ||_{H^{2-\epsilon}(\Gamma)},
\end{eqnarray}
for a constant $C_R>0$, for any $||\eta_i||_{H^{2-\epsilon}(\Gamma)} \leq R$ ($i=1,2$). 

\item[(A2)] Mapping $\mathcal{F}$ is locally Lipschitz from $H_0^{2}(\Gamma)$ into $H^{-a}(\Gamma)$ for some $0\leq a<2$, i.e.
\begin{eqnarray}\label{assumption1}
||\mathcal{F}(\eta_1)-\mathcal{F}(\eta_2)||_{H^{-a}} \leq
C_R || \eta_1 -\eta_2 ||_{H^{2}(\Gamma)},
\end{eqnarray}
for a constant $C_R>0$, for any $||\eta_i||_{H^2(\Gamma)} \leq R$ ($i=1,2$). 

\item[(A3)]  $\mathcal{F}(\eta)$ has a potential
in $H_0^2(\Gamma)$, i.e. there exists a Fr\'{e}chet
differentiable functional $\Pi(\eta)$ on $H_0^2(\Gamma)$ such that
$\Pi'(\eta)=\mathcal{F}(\eta)$, and there are $0<\kappa <1/2$ and  $C^* \geq 0$, such that the following inequality holds,
\begin{eqnarray}\label{assumption2}
 \kappa || \Delta
\eta||_{L^2(\Gamma)}^2+\Pi(\eta)+ C^*\geq 0, ~~ \text{for all } \eta \in H_0^2(\Gamma).
\end{eqnarray}
Moreover, potential $\Pi(\eta)$ is bounded on bounded sets in $H_0^2(\Gamma)$. We will denote this bound by $C(\Pi, \eta)$, i.e. $\Pi(\tilde{\eta})\leq C(\Pi, \eta)$ for all $\tilde{\eta} \in H_0^2(\Gamma)$ such that $||\tilde{\eta}||_{H^2(\Gamma)} \leq ||\eta||_{H^2(\Gamma)}$.
\end{enumerate}

\begin{rem} 
{\normalfont (1) The assumption (A1) will be used to pass the convergence in the nonlinear term $\mathcal{F}(\eta)$ when the bound of approximate solutions is obtained in $H_0^2(\Gamma)$.  On the other hand the condition (A2) tells us the order of nonlinearity precisely - the function $\mathcal{F}(\eta)$ may depend on the derivatives of $\eta$ up to order $2+a$, and the parameter $a$ will also affect the minimal precision in time (the number of time sub-intervals) we will require for the approximate solution convergence (see inequality $\eqref{key}$). The coercivity condition $\eqref{assumption2}$ in the assumption (A3) is used to eliminate potential in the energy as it can be negative, while the bound of the initial potential $C(\Pi, \eta_0)$ defined in (A3) is used in bounding of the initial energy of the approximate problem in a finite Galerkin basis (see section 3.1.1).}

{\normalfont (2) It was proved that in several plate models such as von K\'{a}rm\'{a}n, Kirchhoff and Berger plates, the nonlinear elastic force $\mathcal{F}$ satisfies assumptions (A1)-(A3). In Appendix A more details can be found about the explicit forms of these plate models and the proofs that they satisfy these assumptions (an interested reader can also see \cite{igor, igor2,platesproofs} and the references therein).} 
\end{rem}

\subsection{The weak solution formulation and main result}

Before defining the weak solution for the problem $\eqref{thefirstplate}$-$\eqref{initialconditions}$, we first calculate some terms. For any given $(\bb{u},\eta) \in \mathcal{W}^\eta(0,T)$ and $(\bb{q},\psi) \in Q^\eta(0,T)$, multiplying the equation $\eqref{NStransformed}$ by $\bb{q}$ and integrating over $\Omega^\eta(t)$,  the convective term can be computed in the following way:
$$
\begin{array}{ll}
\int_{\Omega^\eta(t)}((\mathbf{u} - \mathbf{w}^\eta)\cdot
\nabla)\mathbf{u} \cdot \mathbf{q} =  & \frac{1}{2}\int_{\Omega^\eta(t)}( (\mathbf{u} - \mathbf{w}^\eta)\cdot \nabla)\mathbf{u} \cdot \mathbf{q}- \frac{1}{2}\int_{\Omega^\eta(t)} ((\mathbf{u}-\mathbf{w}^\eta ) \cdot \nabla) \bb{q} \cdot
\mathbf{u} \\[2mm]
& + \frac{1}{2}\int_{\Omega^\eta(t)}(\nabla \cdot \bb{w}^\eta )\mathbf{u} \cdot \mathbf{q} +  \frac{1}{2} \int_{\Gamma^\eta(t)}(\mathbf{u} - \mathbf{w}^\eta)\cdot \bb{\nu}^\eta (\bb{u} \cdot \bb{q}).
\end{array}
$$
in which the last term vanishes due to the kinematic coupling condition $\eqref{kincond}$. The diffusive part satisfies
\begin{eqnarray*}
-\int_{\Omega^\eta(t)} (\nabla \cdot \sigma) \cdot \bb{q} = 2\mu \int_{\Omega^\eta(t)} \bb{D}(\bb{u}):\bb{D}(\bb{q}) - \int_{\partial \Omega^\eta(t)} \sigma \bb{\nu}^\eta \cdot \bb{q},
\end{eqnarray*}
where the last term can be expressed as
\begin{eqnarray*}
\int_{\partial \Omega^\eta(t)} \sigma \bb{\nu}^\eta \cdot \bb{q} = \int_{\Gamma^\eta(t)}(\sigma \nu^\eta \cdot \bb{e}_3)\bb{q}\cdot \bb{e}_3 = \int_{\Gamma} S^\eta (\sigma \nu^\eta) \cdot \bb{e}_3 \psi.
\end{eqnarray*}
The only remaining term is the one including time derivative. Since we now want to express these integrals on the fixed domain $\Omega$, we calculate:
$$
\begin{array}{ll}
\int_0^T \int_{\Omega^\eta(t)} \partial_t \bb{u}^\eta_{|\Omega} \cdot \bb{q} &=  \int_0^T\int_{\Omega} J^\eta \partial_t \bb{u}^\eta \cdot q^\eta  \\
& = -\int_0^T\int_{\Omega} \partial_t J^\eta \bb{u}^\eta \cdot \bb{q}^\eta - \int_0^T\int_{\Omega} J^\eta \bb{u}^\eta \cdot \partial_t \bb{q}^\eta - \int_0^T\int_\Omega J_0 \bb{u}_0 \cdot \bb{q}^\eta(0, \cdot). 
\end{array}
$$
As in \cite[pp. 77]{gurtin}, by a simple calculation we have
\begin{eqnarray*}
 \partial_t J^\eta =\partial_t \eta =(1+\eta )\partial_z \Big( \frac{z-\eta }{1+\eta}+1 \Big)\partial_t \eta  = J^\eta \nabla^\eta \cdot \bb{w}^\eta.
\end{eqnarray*}

We finally multiply the plate equation $\eqref{plateeq}$ with $\psi$ and integrate over $(0, T)\times \Gamma$, and use partial integration in time. Summing this with the fluid equation $\eqref{NStransformed}$ multiplied with $\bb{q}$ and integrated over $(0, T)\times \Omega$ and using the calculation we just obtained, it leads to define:

\begin{mydef}\label{weaksolution}(\textbf{Weak solution}) Under the assumptions (A1)-(A3) of $\mathcal{F}$, we say that $ (\mathbf{u} , \eta ) \in
\mathcal{W}^\eta (0,T)$ is a weak solution of the problem $\eqref{thefirstplate}$-$\eqref{initialconditions}$ defined on the reference
domain $\Omega$, if the initial data $\bb{u}_0, \eta_0$ satisfy the compatibility conditions  given in $\eqref{initialconditions}$,  the following identity  
\begin{equation}
\begin{array}{l}
\frac{1}{2} \int_0^T \int_\Omega\Big( J^\eta \big(((\mathbf{u} - \mathbf{w}^\eta)\cdot
\nabla^\eta)\mathbf{u} \cdot \mathbf{q} -((\mathbf{u}-\mathbf{w}^\eta ) \cdot \nabla^\eta) \bb{q} \cdot
\mathbf{u} \big)- \partial_t J^\eta \mathbf{q} \cdot \mathbf{u}\Big)\\[2mm]
  +\int_0^T  \int_\Omega 2\mu J^\eta \mathbf{D}^\eta (\mathbf{u}):\mathbf{D}^\eta(\mathbf{q})
-\int_0^T \int_\Omega J^\eta \mathbf{u} \cdot \partial_t \mathbf{q} \\[2mm]
 -
\int_0^T \int_\Gamma \partial_t \eta \partial_t \psi dz dt +
\int_0^T (\mathcal{F} (\eta), \psi )+\int_0^T (\Delta \eta, \Delta \psi )\\[2mm]
=  \int_{\Omega} J_0 \mathbf{u_0} \cdot q(0) + \int_\Gamma v_0 \cdot \psi(0).
\end{array}
\end{equation}
holds for every $(\mathbf{q},\psi ) \in Q^\eta (0,T)$.

\end{mydef}

The main result of this paper is stated as follows:
\begin{thm}(\textbf{Main Theorem}). Assume that the nonlinear functional $\mathcal{F}$ satisfies the conditions given in (A1)-(A3). Then, for any given initial data $\eta_0 \in H_0^2(\Gamma), v_0 \in L^2(\Gamma)$ and $u_0 \in L^2(\Omega^{\eta_0})^3$ that satisfiy the compatibility conditions given in \eqref{initialconditions}, there exists a weak solution $(\bb{u},\eta) \in \mathcal{W}^\eta (0,T)$ of the problem $\eqref{thefirstplate}$-$\eqref{initialconditions}$ in the sense of Definition $\ref{weaksolution}$, satisfying the energy inequality:
\begin{eqnarray*}\label{en}
E(t)+\int_0^t ||\bb{D}(\bb{u}(\tau))||_{L^2(\Omega^\eta(\tau))}^2 d\tau \leq C_0 =   C (E(0) + C(\Pi, \eta_0) +C^*), \text{ for all } ~t\in(0,T)
\end{eqnarray*}
where 
\begin{eqnarray*}
E(t):=\frac{1}{2}\Big(||\bb{u}(t)||_{L^2(\Omega(t))}^2+||\eta(t)||_{H^2(\Gamma)}^2+||\partial_t \eta(t)||_{L^2(\Gamma)}^2\Big)
\end{eqnarray*}
The constants $C=1/(\frac{1}{2}-\kappa)$, $C^*$ and $C(\Pi,\eta_0)$ come from the boundedness of potential function $\Pi$ on bounded sets in $H_0^2(\Gamma)$ and the coercivity estimate $\eqref{assumption2}$, (see assumption $(A3)$). Moreover, this solution is defined on the time interval $(0,T)$ with $T=\infty$, or
$T<\infty$ which is the moment when the free boundary $\{ z = \eta(t,X)\}$ touches the bottom $\{ z= - 1\}$.
\end{thm}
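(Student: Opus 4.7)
The plan is to construct approximate solutions via a hybrid scheme combining a Galerkin ansatz for the plate displacement with time-discretization and operator splitting for the fluid, and then to pass to the limit in a coordinated way. First I would fix a Galerkin basis $\{\psi_i\}_{i=1}^N$ of $H_0^2(\Gamma)$ made of eigenfunctions of the biharmonic operator with clamped boundary conditions, and partition $(0,T)$ into $M=M(N,a)$ subintervals of length $\dt=T/M$, with $M$ taken sufficiently large depending on $N$, the eigenvalues $\{\lambda_i\}$ and the nonlinearity exponent $a$ from (A2). On each subinterval $[t_n,t_{n+1}]$, the plate sub-step is a finite-dimensional Galerkin ODE for $(\eta^{n+1},v^{n+1})$ in the span of $\{\psi_i\}_{i=1}^N$, using the current fluid trace as forcing and keeping the geometry frozen at $\eta^n$. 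Then I would update the domain via the LE map $A_{\eta^{n+1}}$ and solve a piece-wise stationary Stokes-type problem for the fluid on $\Omega^{\eta^{n+1}}$ with the kinematic coupling $\bb{u}^{n+1}|_\Gamma=v^{n+1}\bb{e_3}$ imposed on the top. Each linear sub-step is solvable by Lax-Milgram on the appropriate divergence-free / Galerkin subspace.

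The next step is to derive uniform a priori estimates by testing the plate sub-step against $v^{n+1}$ and the fluid sub-step against $\bb{u}^{n+1}$. After summing the two identities, the kinematic coupling and the discrete algebraic identity $a\cdot(a-b)=\frac{1}{2}(|a|^2-|b|^2+|a-b|^2)$ produce the cancellations needed to obtain a discrete version of \eqref{eneq}, of the form $E_{\mathrm{disc}}^{n+1}-E_{\mathrm{disc}}^{n}+\mu\,\dt\,\|\bb{D}^{\eta^n}(\bb{u}^{n+1})\|_{L^2}^2+(\Pi(\eta^{n+1})-\Pi(\eta^n))\le 0$. Using the coercivity (A3) to absorb $\Pi$ through the constant $1/(\frac{1}{2}-\kappa)$ and the initial bound $C(\Pi,\eta_0)$, this telescopes and gives uniform $(N,\dt)$-independent control of $\|\bb{u}^n\|_{L^2(\Omega^{\eta^n})}$, $\|\eta^n\|_{H^2(\Gamma)}$, $\|v^n\|_{L^2(\Gamma)}$ and the dissipation integral, reproducing the energy estimate claimed in the theorem. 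A separate no-touching argument using $\bar\eta_{\dt}\in C(0,T_*;H^{2-\epsilon})\hookrightarrow C^{0,\alpha}$ ensures $\bar\eta_{\dt}>-1$ uniformly on some interval $(0,T_*)$, so that $J^{\bar\eta_{\dt}}$ stays strictly positive and the LE map is non-degenerate.

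The third step is the passage to the limit through a diagonal subsequence sending $\dt\to 0$ and $N\to\infty$ under the constraint $M\ge M(N,a)$. For the plate, an Aubin-Lions argument in $C(0,T_*;H^{2-\epsilon}(\Gamma))$ based on the $H^2$ bound on $\eta$ and the $L^2$ bound on $\partial_t\eta$ gives strong convergence, which together with assumption (A1) identifies the limit of $\mathcal{F}(\bar\eta_{\dt})$. For the fluid, I would pull everything back to the fixed reference domain $\Omega$ via $A_{\bar\eta_{\dt}}$ and combine the $L^\infty_t L^2_x\cap L^2_t H^1_x$ bounds with equi-continuity in time, obtained by testing against Galerkin test functions whose time derivatives are controlled in the finite-dimensional span, to extract strong $L^2$ convergence of $\bar{\bb{u}}^\eta_{\dt}$; this is enough to pass to the limit in the convective term and the boundary trace. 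The limit $(\bb{u},\eta)$ then satisfies the weak identity of Definition \ref{weaksolution} for test functions $\psi$ lying in some finite Galerkin span, and density of $\bigcup_N\mathrm{span}\{\psi_i\}_{i=1}^N$ in $H_0^2(\Gamma)$ closes the identification.

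The main obstacle, as indicated in the introduction, is controlling the consistency error of the operator splitting in the presence of the plate nonlinearity acting at order up to $2+a$. The discrepancy between the frozen geometry $\eta^n$ used in the plate sub-step and the evolving $\eta^{n+1}$ used in the fluid sub-step produces commutator terms whose control requires estimating $\|\eta^{n+1}-\eta^n\|_{H^{2+a}}$. Since the displacement lives in the finite-dimensional Galerkin space, inverse estimates in the span of $\{\psi_i\}_{i=1}^N$ trade regularity for powers of $\lambda_N$, so that the commutator is dominated by $\dt\cdot g(\lambda_N,a)$ for an explicit increasing function $g$. Forcing this quantity to vanish in the joint limit is precisely the sufficient condition $M\ge M(N,a)$ advertised in the abstract, and achieving this scaling while simultaneously maintaining uniform moving-domain Korn and trace inequalities, together with uniform positivity of $J^\eta$, is the technical heart of the proof.
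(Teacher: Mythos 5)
Your high-level plan matches the paper's: a hybrid scheme combining Galerkin approximation of the plate with Lie splitting and time discretization of the fluid, discrete energy estimates closed via the coercivity of $\Pi$, a no-touching argument for $J^\eta$, Aubin--Lions-type compactness for $\eta$, strong $L^2$ compactness for the fluid, and passage to the limit by density. Two things, however, are off in a way that matters.

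First, and most importantly, you misidentify the mechanism that forces the time step to depend on the Galerkin dimension. You attribute it to a ``commutator'' consistency error arising from freezing the geometry at $\eta^n$ in the plate sub-step, and claim the quantity to control is $\|\eta^{n+1}-\eta^n\|_{H^{2+a}}$ via inverse estimates. That is not the obstruction the paper faces, and controlling $\|\eta^{n+1}-\eta^n\|_{H^{2+a}}$ is neither necessary nor what is done. In the paper's scheme the plate sub-problem is a genuine ODE in the span of $\{w_1,\dots,w_k\}$ on $[n\Delta t,(n+1)\Delta t]$ producing a time-continuous $\partial_t\eta^{n+1}(t)$, while the fluid sub-problem returns a piece-wise constant trace velocity $v^{n+1}$; the issue is that the kinematic coupling $\partial_t\eta=v$ cannot hold exactly at the approximate level, and $\partial_t\eta_{\Delta t,k}$ and $v_{\Delta t,k}$ must be shown to converge to the same limit. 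The estimate the paper actually needs is
\begin{equation*}
\|\partial_t\eta^{n+1}(t)-v^n\|_{L^2(\Gamma)}^2 \le \Big(2\xi_k C_B + 4C_R^2 C_\Gamma\big(\|\mathcal{F}(0)\|_{H^{-a}}^2+C_B\big)\sum_{i=1}^k \xi_i^{a/2}\Big)(\Delta t)^2,
\end{equation*}
obtained simply by testing the plate ODE $\eqref{SSP}$ against $w_i$, summing over $i$, and using the interpolation bound $\|w_i\|_{H^a}^2\le C_\Gamma\xi_i^{a/2}$ to handle $(\mathcal{F}(\eta),w_i)$ via (A2). The constraint $\eqref{key}$ is then exactly $N(k)\gtrsim T\big(\xi_k+\sum_{i\le k}\xi_i^{a/2}\big)^{1/(2-\alpha)}$, and its role is to guarantee $\|\partial_t\eta_{\Delta t,k}-v_{\Delta t,k}\|_{L^2}=O((\Delta t)^{\alpha/2})$ so the two structure velocities share a limit. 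Your inverse-estimate heuristic lands in the right neighborhood (eigenvalue growth $\leftrightarrow$ loss of regularity in the span), but the quantity and the source of the loss are different, and without identifying $\partial_t\eta-v$ as the object to control, the rest of the limit passage cannot be closed.

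Second, ``equi-continuity in time obtained by testing against Galerkin test functions'' understates what is needed for the fluid. The approximate velocities live on domains $\Omega^{\widetilde\eta^n_{\Delta t,k}}$ that change with $n$, $\Delta t$, $k$, so the classical Aubin--Lions--Simon argument does not apply directly. The paper extends the velocities by zero to a fixed maximal domain $\Omega^M$ (with a fractional-Sobolev extension lemma, Appendix B), verifies a dual-norm bound on the discrete time increments of the form $\|P^n(\hat{\bb u}^{n+1}-\hat{\bb u}^n)/\Delta t\|_{(Q^n)'}\le C(\|\hat{\bb u}^{n+1}\|_{V^n}+1)$, and invokes the generalized compactness theorem of Muha--\v Cani\'c for moving-domain problems (with a H\"older-modified modulus of continuity because $\eta$ is only $C^{0,\alpha}$ in time). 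This block of work is not ``standard'' and your sketch does not provide a replacement for it.

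A minor scheme discrepancy: in the paper the fluid sub-problem at step $n+1$ is posed in the \emph{previous} domain $\Omega^{\widetilde\eta^n}$ (with the new $\widetilde\eta^{n+1}$ and $\widetilde{\partial}_t\eta^{n+1}$ entering only through the Jacobian and LE velocity), not on $\Omega^{\eta^{n+1}}$ as you write. This choice is what makes the discrete energy telescope cleanly in Lemma~\ref{fspsolution}.
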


The remainder of the paper is organized as follows. In section 3.1 we will formulate our approximate problems. The existence of approximate solutions will be given in section 3.2, and certain properties of these solutions will be discussed in section 3.3. In section 4,  we study the convergence of approximate solutions. The weak and strong convergences will be proved in sections 4.1 and 4.2, respectively. The proof of the main result will be given in section 5.


\section{Construction of approximate solutions}
In this section, by using the time discretization and Galerkin basis in $H_0^2(\Gamma)$, we will construct approximate solutions to the original problem $\eqref{thefirstplate}$-$\eqref{initialconditions}$, and obtain certain uniform estimates of approximate solutions. At the end of this section, the difference between $\partial_t \eta$ and the trace of the fluid velocity $v$ at $\Gamma^\eta$  (which are not equal from the approximate problems) is studied, and a sufficient condition  is introduced on the number of time sub-intervals $N=T/\Delta t$ in the time discretization in order to keep the difference smaller than $O((\Delta t)^\alpha)$ in $L^2(\Gamma)$ for some $\alpha >0$.

\subsection{Formulation of approximate problems}
For any fixed $T>0$ and $N\ge 1$, letting $\Delta t=\frac{T}{N} $, we split the time interval $[0,T]$ into $N$ equal 
sub-intervals and on each sub-interval we use the Lie operator splitting, and separate the problem into two parts -
the fluid and structure sub-problems. We rewrite the problem $\eqref{thefirstplate}$-$\eqref{initialconditions}$ as the following one:
\begin{eqnarray}
\frac{dX}{dt} &=& AX,~~ t \in (0,T), \\
X|_{t=0} &=& X^0,
\end{eqnarray}
where $X = (\mathbf{u},v,\eta)^T$, and $v =\partial_t \eta$ and
decompose $A = A_1+A_2$, where $A_1$ and $A_2$ are non-trivial and correspond to these two sub-problems. Since the sub-problems are not of the same nature, from here on we proceed to define them separately.

\subsubsection{The structure sub-problem (SSP) in the Galerkin basis}
First, we define the biharmonic eigenvalue problem as to find non-trivial $w \in H_0^2(\Gamma)$ and $\xi>0$ such that
\begin{eqnarray*}
\begin{cases}
 (\Delta w, \Delta f) = \xi (w, f),\\
w(X) = \partial_\nu w(X) = 0, \text{ on } \partial \Gamma,
\end{cases}
\end{eqnarray*}
for all $f \in H_0^2(\Gamma)$. This eigenvalue problem has a growing unbounded sequence of eigenvalues $0<\xi_1<\xi_2<...$, and corresponding smooth eigenfunctions $\{ w_i \}_{i \in \mathbb{N}}$ which form a basis of $H_0^2(\Gamma)$. The set $\{ w_i \}_{i \in \mathbb{N}}$ is called a Galerkin basis (see for example \cite[Theorem 7.22]{folland}).

Denote by $\mathcal{G}_{k}=$ span$(\{w_i\}_{ 1\leq
i \leq k})$ and the closed subspaces
\begin{eqnarray*}
&H_{0,k}^2(\Gamma): = (\mathcal{G}_k, ||\cdot||_{H^2(\Gamma)})\\
&L_k^2(\Gamma):= (\mathcal{G}_k, ||\cdot||_{L^2(\Gamma)}).
\end{eqnarray*}

\underline{The structure sub-problem (SSP):} \\

For any fixed $k \geq 1$, and $0 \leq n \leq N-1$, assume that $v^n \in L_k^2(\Gamma)$ is given already, we define $\eta^{n+1} \in C^1([n \Delta t, (n+1)\Delta t]; L_k^2)\cap C([n \Delta t, (n+1)\Delta t]; H_{0,k}^2(\Gamma))$ the
solution of the following problem inductively on $n$, 
\begin{eqnarray}\label{SSP}
\begin{cases} 
\Big(\ddfrac{\partial_t \eta^{n+1}(t) - v^n}{\Delta t},\psi
\Big)_\Gamma+(\Delta \eta^{n+1}(t), \Delta \psi)_\Gamma +
(\mathcal{F}(\eta^{n+1}(t)), \psi)_\Gamma = 0, \\ 
\eta^{n+1}(n \Delta t, X) =\eta^{n}(n \Delta t, X)
\end{cases}
\end{eqnarray}
for all $\psi \in H_{0,k}^2(\Gamma)$,  with $\eta_{\Delta t,k}^0(0,X) = \eta_{0,k}(X):= \sum_{i=1}^k (\eta_0, w_i) w_i$, 
where we have omitted $\Delta t$ and $k$ in the subscript of notation $\eta^{n+1}$.

The approximate solution $\eta_{\Delta t,k}(t,X)$ on whole
time interval $[0,T]$ is defined as the following way:
\begin{eqnarray*}
\eta_{\Delta t,k} (t) := \eta_{\Delta t,k}^{n+1}(t), ~~\text{for } t\in [n\Delta t, (n+1) \Delta t),~~ 0\leq n \leq
N-1.
\end{eqnarray*}

\subsubsection{The fluid sub-problem (FSP)}

Assume that we have $\eta_{\Delta t,k}^{n+1}$ already from the problem \eqref{SSP}, we define
following average quantity:
\begin{eqnarray*}
\widetilde{\partial}_t \eta_{\Delta t,k}^{n+1}: = \frac{1}{\Delta
t}\int_{n \Delta t}^{(n+1)\Delta t} \partial_t \eta_{\Delta t,k}^{n+1} dt= \frac{\eta_{\Delta t,k}^{n+1}((n+1) \Delta t)-\eta_{\Delta t,k}^n(n \Delta t)}{\Delta t}
\end{eqnarray*}
and the LE mapping $A_{\Delta t,k}^{n+1} = (X,
(z+1)\widetilde{\eta}_{\Delta t,k}^{n+1}+z))$, where $\widetilde{\eta}_{\Delta t,k}^{n+1} = \eta_{\Delta t,k}^{n+1}((n+1)\Delta t)$. The
discretizated Jacobian and LE velocity are given as:
\begin{eqnarray*}
 J_{\Delta t,k}^{n+1}:= \text{det} \nabla A_{\Delta t,k}^{n+1}=\widetilde{\eta}_{\Delta t,k}^{n+1}+1, ~~~
\bb{w}_{\Delta t,k}^{n+1}: =\widetilde{\partial}_t \eta_{\Delta t,k}^{n+1} \mathbf{e_3}
\end{eqnarray*}
To determine the approximate solution for the fluid part of the problem $\eqref{thefirstplate}$-$\eqref{initialconditions}$ in the weak form, we first discretize the time derivatives and obtain the following piece-wise stationary problem:
\begin{eqnarray}\label{FSPeq1}
\frac{\mathbf{u}^{n+1}-\mathbf{u}^n}{\Delta t }+((\mathbf{u}^n- \mathbf{w}^{n+1})\cdot
\nabla^{\eta^{n+1}})\mathbf{u}^{n+1} &=& \nabla^{\eta^{n+1}} \cdot
\mathbf{\sigma}^{\eta^{n+1}}(\mathbf{u}^{n+1},p^{n+1}),~ \text{ in } \Omega,~~~~ \label{FSP1}
\end{eqnarray}\

and
\begin{eqnarray}
\frac{\widetilde{\partial_t} v^{n+1}-\eta^{n+1}}{\Delta t}&=&
-S^{\eta^{n+1}} \mathbf{\sigma}^{\eta^{n+1}} (\mathbf{u}^{n+1}, p^{n+1})\cdot
\mathbf{\nu}^{\eta^{n+1}},\text{ on } \Gamma, \label{FSP2}\\
\mathbf{u}^{n+1}&=&v^{n+1} \mathbf{e_3}, \text{ on } \Gamma \label{FSPeq3},
\end{eqnarray}
with no-slip boundary condition on $\Sigma$, where we omitted $\Delta t,k$ in the subscript for simplicity.   

For a fixed basis of Galerkin functions for SSP, we
define the fluid problem function space as follows:
\begin{eqnarray*}
\mathcal{W}_k^{\tilde{\eta}^n} := \{ (\bb{u},v):  V_F^{\tilde{\eta}^n} \times
L_k^2(\Gamma), \bb{u}=v \mathbf{e_3} \text{ on } \Gamma\}.
\end{eqnarray*}
where
\begin{eqnarray*}
V_F^{\tilde{\eta}^n} := \{ \bb{u}:  \bb{u} \circ (A^{n})^{-1} \in H^1(\Omega^{\tilde{\eta}^{n}}),  \nabla^{\tilde{\eta}^n} \cdot \bb{u} = 0  \}
\end{eqnarray*}
For $\bb{u} \in V_F^{\tilde{\eta}^n}$, we will denote the corresponding mapped function $\hat{\bb{u}} := \bb{u} \circ (A^n)^{-1}$, and we introduce the inner product on $V_F^{\tilde{\eta}^n}$ as $(\bb{f},\bb{g})_{V_F^{\tilde{\eta}^n}}:= (\hat{\bb{f}},\hat{\bb{g}})_{H^1(\Omega^{\tilde{\eta}^{n}})}$.

For the initial data, we take 
$$v_{\Delta t,k}^0 =v_{0,k}:=\sum_{i=1}^k (v_0, w_i)w_i, \quad \mathbf{u}_{\Delta t,k}^0 =\mathbf{u}_{0,k}:= \mathbf{u}_0 \circ A_{\eta_0}-R_\Gamma(v_0-v_{\Delta t,k}^0)$$ 
where $R_\Gamma$ is a linear extension operator from $H^{s}(\Gamma)$, $s<1/2$, to $\mathcal{V}_F^{\eta_0}$ (see section 5.1 and extension of function $\bb{r}_1$). Notice that we had to modify $\mathbf{u_{0,k}}$ since the change of the initial displacement affects the compatibility conditions, and the above choice makes the compatibility conditions being preserved for the approximated problem.

Omitting ${\Delta t,k}$ in the subscript and simplifying the notation $\nabla^n := \nabla^{\eta^n}$, corresponding to $\eqref{FSPeq1}$-$\eqref{FSPeq3}$, we now define the following weak form of \underline{the fluid sub-problem (FSP)}:

Assume that $\widetilde{\eta}^{n+1} \in H_{0,k}^2(\Gamma)$ and $\widetilde{\partial}_t \eta^{n+1} \in L_k^2(\Gamma)$ are given already, the problem is to find $(\mathbf{u}^{n+1},v^{n+1}) \in\mathcal{W}_k^{\widetilde{\eta}^{n}}$ such that
\begin{align}
&\ddfrac{1}{2} \bint_\Omega J^n \big( ((\mathbf{u}^{n} - \mathbf{w}^{n+1})\cdot
\nabla^{n+1})\mathbf{u}^{n+1} \cdot \mathbf{q} - ((\mathbf{u}^n-\mathbf{w}^{n+1} ) \cdot
\nabla^{n+1}) \mathbf{q} \cdot \mathbf{u}^{n+1} \big) &\nonumber \\
&+\bint_{\Omega} J^n\ddfrac{\mathbf{u}^{n+1}-\mathbf{u}^n}{\Delta t} \cdot \mathbf{q} +
\frac{1}{2}\bint_\Omega \ddfrac{J^{n+1}-J^n}{\Delta t} \mathbf{u}^{n+1}
\cdot \mathbf{q}&  \nonumber  \\
&+2 \mu \int_\Omega J^{n} \mathbf{D}^{n+1} (\mathbf{u}^{n+1}):\mathbf{D}^{n+1}(\mathbf{q}) + \bint_\Gamma
\ddfrac{v^{n+1}-\widetilde{\partial}_t \eta^{n+1}}{\Delta t} \psi
=0, &\label{FSP}
\end{align}
for all $(\bb{q},\psi)\in \mathcal{W}_k^{\widetilde{\eta}^{n}}$. Notice that the solution and the test functions are defined in the space that correspond to the "previous" domain $\Omega^{\eta_{\Delta t,k}^n}$ (instead of $\Omega^{\eta_{\Delta t,k}^{n+1}}$).

We define piece-wise stationary solution on the whole time interval $[0,T]$:
\begin{eqnarray*}
\mathbf{u}_{\Delta t, k}(t):= \mathbf{u}_{\Delta t,k}^n,~~ v_{\Delta t, k}(t):= v_{\Delta t,k}^n, ~~\text{ for }
t\in [(n-1)\Delta t, n \Delta t),~~1 \leq n\leq N,
\end{eqnarray*}

We will sometimes use the notation $(\bb{u}^{n+1},v^{n+1})=$ FSP$(\widetilde{\partial}_t \eta^{n+1}, \widetilde{\eta}^{n+1}, \bb{u}^{n})$ to denote the solution of $\eqref{FSP}$ and for $N = T/\Delta t$ and $k$, we will write the solution
\begin{eqnarray*}
X_{N,k} = (\bb{u}_{\Delta t,k}, \eta_{\Delta t,k}, v_{\Delta t,k})^T
\end{eqnarray*}
determined by $\eqref{SSP}$ and $\eqref{FSP}$, inductively. \\


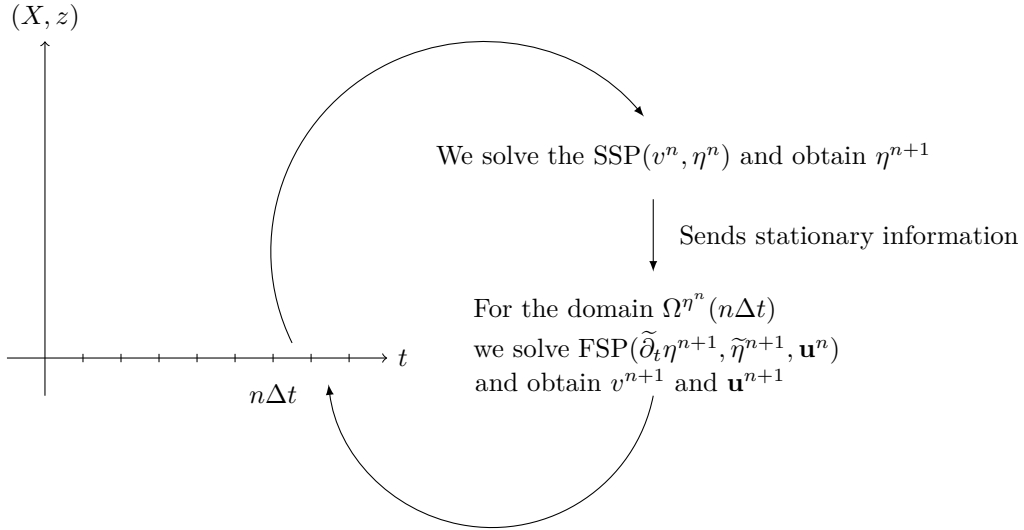
\begin{figure}[!h]
\begin{center}
\begin{tikzpicture}[domain=0:3]

      \draw[->] (-0.5,0) -- (4.5,0) node[right] {{\footnotesize $t$}};
      \draw[->] (0,-0.5) -- (0,4.2) node[above] {{\footnotesize$ (X,z)$}};

\node (v1) at (0.5,0.2) {};
\node (v2) at (0.5,-0.2) {};
\draw  (v1) edge (v2);

\node (v4) at (1,0.2) {};
\node (v3) at (1,-0.2) {};
\node (v5) at (1.5,0.2) {};
\node (v6) at (1.5,-0.2) {};
\node (v8) at (2,0.2) {};
\node (v7) at (2,-0.2) {};
\node (v10) at (2.5,0.2) {};
\node (v9) at (2.5,-0.2) {};
\node (v12) at (3,0.2) {};
\node (v11) at (3,-0.2) {};
\node (v14) at (3.5,0.2) {};
\node  (v13) at (3.5,-0.2) {};
\node (v16) at (4,0.2) {};
\node (v15) at (4,-0.2) {};
\draw  (v3) edge (v4);
\draw  (v5) edge (v6);
\draw  (v7) edge (v8);
\draw  (v9) edge (v10);
\draw  (v11) edge (v12);
\draw  (v13) edge (v14);
\draw  (v15) edge (v16);

\node [anchor=north] (v20) at (3,-0.2) {{\footnotesize $n \Delta t$}};

\node [anchor=north] at (8.4,3) {{\footnotesize We solve the SSP($v^{n},\eta^{n})$ 
and obtain $\eta^{n+1}$}};

\draw[-latex] (3.25,0.2) arc (-154.0988:-320:2.7792);

\node (v17) at (8,2.25) {};
\node [anchor=west] at (8.2,1.6) {\footnotesize Sends stationary information};
\node [anchor=north] at (7.63,1) {\footnotesize For the domain $\Omega^{\eta^{n}}(n\Delta t)$};

 \node [anchor=north] (v19) at (8.05,0.5) {{\footnotesize we solve
FSP($\widetilde{\partial}_t \eta^{n+1}, \widetilde{\eta}^{n+1}, \bb{u}^{n})$ }};
 \node [anchor=north] (v20) at (7.7,0) {{\footnotesize and obtain $v^{n+1}$ and $\bb{u}^{n+1}$}};
\node (v18) at (8,1) {};
\draw[-latex]  (v17) edge (v18);
\draw[-latex] (8,-0.5) arc (-11.0789:-173:2.1636);
\end{tikzpicture}
\caption{The diagram of the solving procedure.}
\end{center}
\end{figure}

\begin{rem} 
{\normalfont (1) We defined the structure sub-problem by using the Galerkin approximation and this way we created a hybrid approximation scheme, with fluid being piece-wise stationary and plate displacement being continuous in time (and plate displacement velocity being piece-wise continuous in time). Since these two sub-problems are communicating, we had to modify the data that structure sub-problem sends to fluid sub-problem from continuous in time to stationary, because the fluid sub-problem cannot be solved with continuous in time information. This creates a difference between functions and their averages which needs to be taken into consideration later. }

{\normalfont (2) In the original problem $\eqref{thefirstplate}$-$\eqref{initialconditions}$, the plate displacement velocity $\partial_t \eta \bb{e}_3$ and the trace of the fluid $\bb{u}(t,X,\eta(t,X))$ are equal due to the kinematic boundary condition $\eqref{kincond}$. However, in general, this is not true for approximate solutions determined from the approximate problems. In previous work [23 - 27], their difference goes to zero by an estimate derived from the discretized energy inequalities of sub-problems (see for example \cite[Proposition 4]{BorSunNavierSlip}), while in the present work the situation is more complicated. We will study this difference in detail  in section 3.3 by choosing a special discretization step depending on the number of Galerkin basis functions and some other parameters.}
\end{rem}

\subsection{Solutions of (SSP), (FSP) and discrete energy estimates}
We will first define appropriate version of energy for each
time interval and basis of functions for $n\Delta t\leq t\leq (n+1)\Delta t$:
$$
\begin{array}{ll}
S_{\Delta t, k}^n(t) =& \frac{1}{2\Delta t} \int_{n\Delta t}^{t} ||
\partial_t \eta_{\Delta t,
k}^n(\tau)||_{L^2(\Gamma)}^2 d\tau+\frac{1}{2}||\Delta \eta_{\Delta t,k}^n(t)||_{L^2(\Gamma)}^2+\Pi(\eta_{\Delta t, k}^n(t)) \\[2mm]
& +
\frac{t-n\Delta t}{2 \Delta t}\int_\Omega J^{n-1}|\mathbf{u}_{\Delta t, k}^{n-1}|^2 \\[2mm]
F_{\Delta t, k}^n(t) =& \frac{t-n\Delta t}{2\Delta t} || v_{\Delta t,
k}^n||_{L^2(\Gamma)}^2+\frac{1}{2}||\Delta \eta_{\Delta t,
k}^{n+1}(t)||_{L^2(\Gamma)}^2+\Pi(\eta_{\Delta t, k}^{n+1}(t)) \\[2mm]
&+
 \frac{t-n\Delta t}{2\Delta t} \int_\Omega J^n|\mathbf{u}_{\Delta t, k}^n|^2 \\[2mm]
D_{\Delta t,k}^n = &\Delta t \mu \int_{\Omega} J^{n-1} \mathbf{D}(\mathbf{u}_{\Delta
t,k}^n):\mathbf{D}(\mathbf{u}_{\Delta t, k}^n)
\end{array}
$$
We will omit ${\Delta t,k}$ of the subscript throughout most of this section. 

\begin{lem}\label{SSPestimate}
For a given function $v^n \in L^\infty(0,T ; L^2(\Gamma))$, the solution
$\eta^{n+1}(t) = \sum_{i=1}^k \alpha_{i,n+1}(t)w_i$ of (SSP)
satisfies the following a priori estimates:
\begin{equation}\label{lem11}
\ddfrac{1}{2\Delta t}\bint_{n \Delta t}^{t}||\partial_t
\eta^{n+1}-v^n||_{L^2(\Gamma)}^2+S^{n+1}(t) = F^n(n \Delta t),
\end{equation}
and
\begin{equation}\label{lem12}
\begin{array}{ll}
\ddfrac{t-n\Delta t}{2 \Delta t}\bint J^{n}|\mathbf{u}_{\Delta t, k}^{n}|^2 & + \frac{1}{2\Delta t}\bint_{n \Delta t}^{t} \big(||\partial_t
\eta^{n+1}-v^n||_{L^2(\Gamma)}^2+||\partial_t
\eta^{n+1}||_{L^2(\Gamma)}^2 \big) 
\\[2mm]
& + c||\Delta
\eta^{n+1}(t)||_{L^2(\Gamma)}^2  \leq C^*+ F^n(n \Delta t),
\end{array}
\end{equation}
with $n\Delta t \leq t \leq (n+1)\Delta t$, where $c = 1/2-\kappa$.

\end{lem}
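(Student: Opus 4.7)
The plan is to derive a pointwise-in-time energy identity for the SSP and then integrate it over $[n\Delta t, t]$. Since the Galerkin space $\mathcal{G}_k = H_{0,k}^2(\Gamma)$ is finite-dimensional, $\partial_t \eta^{n+1}(t) \in \mathcal{G}_k$ is an admissible test function in the weak formulation \eqref{SSP}; I take $\psi = \partial_t \eta^{n+1}(t)$. The discrete time-derivative term is handled by the polarization identity $(a - b, a) = \frac{1}{2}\|a\|^2 + \frac{1}{2}\|a - b\|^2 - \frac{1}{2}\|b\|^2$ with $a = \partial_t \eta^{n+1}$, $b = v^n$, which converts $\frac{1}{\Delta t}(\partial_t \eta^{n+1} - v^n, \partial_t \eta^{n+1})_\Gamma$ into $\frac{1}{2\Delta t}\big(\|\partial_t \eta^{n+1}\|^2 + \|\partial_t \eta^{n+1} - v^n\|^2 - \|v^n\|^2\big)$. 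Simultaneously, the biharmonic term collapses to $\tfrac{1}{2}\tfrac{d}{dt}\|\Delta \eta^{n+1}\|_{L^2(\Gamma)}^2$, and by the Fr\'echet chain rule applied to the potential $\Pi$ from (A3), the nonlinear term becomes $\tfrac{d}{dt}\Pi(\eta^{n+1}(t))$. The duality pairing $\langle \mathcal{F}(\eta^{n+1}), \partial_t \eta^{n+1} \rangle_{H^{-2},H_0^2}$ reduces to the $L^2$-inner product actually written in \eqref{SSP} because the test function sits in the smooth finite-dimensional subspace $\mathcal{G}_k$.

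Integrating the resulting pointwise identity from $n\Delta t$ to $t$ and using the initial condition $\eta^{n+1}(n\Delta t) = \eta^n(n\Delta t)$, I obtain
\begin{align*}
&\frac{1}{2\Delta t}\int_{n\Delta t}^t \|\partial_t \eta^{n+1} - v^n\|_{L^2(\Gamma)}^2 \, d\tau + \frac{1}{2\Delta t}\int_{n\Delta t}^t \|\partial_t \eta^{n+1}\|_{L^2(\Gamma)}^2 \, d\tau \\
&\qquad + \frac{1}{2}\|\Delta \eta^{n+1}(t)\|_{L^2(\Gamma)}^2 + \Pi(\eta^{n+1}(t)) \\
&\qquad = \frac{t - n\Delta t}{2\Delta t}\|v^n\|_{L^2(\Gamma)}^2 + \frac{1}{2}\|\Delta \eta^n(n\Delta t)\|_{L^2(\Gamma)}^2 + \Pi(\eta^n(n\Delta t)),
\end{align*}
which regroups to \eqref{lem11} once the left-hand side is recognized as $\frac{1}{2\Delta t}\int_{n\Delta t}^t \|\partial_t \eta^{n+1} - v^n\|^2\,d\tau + S^{n+1}(t)$ and the right-hand side as $F^n(n\Delta t)$; the fluid kinetic-energy pieces $\int_\Omega J^{n}|\mathbf{u}^{n}|^2$ entering both $S^{n+1}$ and $F^n$ are spectators, since the fluid unknowns are frozen throughout the SSP step, so identical weighted contributions appear on both sides and cancel.

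For \eqref{lem12}, the additional ingredient is the coercivity supplied by (A3): from $\kappa\|\Delta \eta\|_{L^2(\Gamma)}^2 + \Pi(\eta) + C^* \geq 0$ one has
$$\frac{1}{2}\|\Delta \eta^{n+1}(t)\|_{L^2(\Gamma)}^2 + \Pi(\eta^{n+1}(t)) \geq \Big(\frac{1}{2} - \kappa\Big)\|\Delta \eta^{n+1}(t)\|_{L^2(\Gamma)}^2 - C^* = c\,\|\Delta \eta^{n+1}(t)\|_{L^2(\Gamma)}^2 - C^*;$$
substituting this lower bound into \eqref{lem11} and moving $C^*$ to the right-hand side yields \eqref{lem12}. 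The main obstacle is not analytic but notational bookkeeping — correctly matching the time-weighted kinetic-energy terms (those carrying the factor $(t - n\Delta t)/(2\Delta t)$ in the definitions of $S^{n+1}(t)$ and $F^n(n\Delta t)$) with the raw balance produced by the testing argument, and confirming that the fluid-side quantities, which the SSP never sees, enter the two sides symmetrically and drop out, so that only the genuine structure contributions remain.
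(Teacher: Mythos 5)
Your proof follows essentially the same route as the paper's: test \eqref{SSP} with $\psi=\partial_t\eta^{n+1}(t)$ (which is exactly what the paper's choice $\psi=\alpha'_{i,n+1}(t)w_i$ summed over $i$ accomplishes), apply the algebraic identity $2(a-b)a=(a-b)^2+a^2-b^2$, use $\tfrac{d}{dt}\Pi(\eta)=(\mathcal{F}(\eta),\partial_t\eta)$, integrate over $[n\Delta t, t]$, and invoke the coercivity of $\Pi$ from (A3) for \eqref{lem12}. The one small imprecision is in your account of the fluid kinetic-energy terms: they do not appear spontaneously from testing the SSP and then ``cancel'' -- the paper's proof explicitly \emph{adds} $\int_\Omega J^n|\mathbf{u}^n|^2$ to both sides so that the left side can be read as $S^{n+1}(t)$ and the right side as $F^n(n\Delta t)$; this is bookkeeping you yourself flag, so the mathematical content of the argument is the same.
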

\begin{proof}

In $\eqref{SSP}$, by taking $\psi=\alpha_{i,n+1}'(t)w_i$ and summing over
$i=1,...,k$, it follows
\begin{eqnarray*}
\frac{1}{2 \Delta t}\big(||\partial_t
\eta^{n+1}(t)-v^n||_{L^2(\Gamma)}^2+||\partial_t
\eta^{n+1}(t)||_{L^2(\Gamma)}^2\big)\\
+\frac{1}{2}\frac{d}{dt}||\Delta 
\eta^{n+1}(t)||_{L^2(\Gamma)}^2+ \frac{d}{dt} \Pi(\eta^{n+1}(t))= 
 \frac{1}{2 \Delta t}||v^n||_{L^2(\Gamma)}^2,
\end{eqnarray*}
by using $\frac{d}{dt} \Pi(\eta)=(\mathcal{F}(\eta),\partial_t
\eta)$, and $2(a-b)a=  ((a-b)^2+a^2-b^2)$. Integrating this equality on $[n\Delta t,t]$ and adding $\int J^n |\mathbf{u}^n|^2$ on both sides, we obtain
$\eqref{lem11}$, and using the coercivity property of potential $\eqref{assumption2}$, the estimate $\eqref{lem12}$ follows immediately. 

\end{proof}

\begin{lem}\label{sspsolution}
For a given function $v^n \in L^\infty(0,T ; L^2(\Gamma))$, the problem (SSP) has a unique solution 
$$\eta^{n+1}(t)=\sum_{i=1}^k \alpha_{i,n+1}(t)w_i$$ 
in $C^1([n\Delta t, (n+1)\Delta t];L_k^2(\Gamma))\cap C([n\Delta t, (n+1)\Delta t];
H_{0,k}^2(\Gamma))$.
\end{lem}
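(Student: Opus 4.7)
The plan is to reduce the variational problem (SSP) to a finite-dimensional ODE system for the coefficients $\alpha_{i,n+1}(t)$, apply the Picard--Lindel\"of theorem to get local existence and uniqueness, and then use the a priori estimate from Lemma \ref{SSPestimate} to extend the solution to the full interval $[n\Delta t, (n+1)\Delta t]$.

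First I would assume (after an orthogonalization argument) that $\{w_i\}_{i=1}^{k}$ is orthonormal in $L^2(\Gamma)$; in particular $(\Delta w_i, \Delta w_j)_\Gamma = \xi_i \delta_{ij}$. Plugging the ansatz $\eta^{n+1}(t) = \sum_{i=1}^k \alpha_i(t) w_i$ into \eqref{SSP} and testing with $\psi = w_j$ for $j = 1,\dots,k$ reduces SSP to the ODE system
\begin{equation*}
\alpha_j'(t) = (v^n, w_j)_\Gamma - \Delta t\,\xi_j\, \alpha_j(t) - \Delta t\,\bigl\langle \mathcal{F}\bigl(\textstyle\sum_{i=1}^k \alpha_i(t) w_i\bigr), w_j\bigr\rangle_{H^{-2},H_0^2}, \qquad j=1,\dots,k,
\end{equation*}
with initial condition $\alpha_j(n \Delta t) = (\eta^n(n\Delta t), w_j)_\Gamma$. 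The key point is that the nonlinear term
\[
\alpha \mapsto G_j(\alpha) := \bigl\langle \mathcal{F}\bigl(\textstyle\sum_i \alpha_i w_i\bigr), w_j\bigr\rangle_{H^{-2},H_0^2}
\]
defines a locally Lipschitz map $\mathbb{R}^k \to \mathbb{R}^k$: by assumption (A1), $\mathcal{F}$ is locally Lipschitz $H_0^{2-\epsilon}(\Gamma) \to H^{-2}(\Gamma)$, and since $\mathcal{G}_k$ is finite-dimensional all norms on it are equivalent, so for $\alpha, \tilde\alpha$ in any bounded set of $\mathbb{R}^k$,
\[
|G_j(\alpha) - G_j(\tilde\alpha)| \leq \|\mathcal{F}(\eta_\alpha) - \mathcal{F}(\eta_{\tilde\alpha})\|_{H^{-2}} \|w_j\|_{H_0^2} \leq C_{k,R}\, |\alpha - \tilde\alpha|.
\]
Picard--Lindel\"of then yields a unique $C^1$ solution $\alpha$ on some maximal subinterval $[n\Delta t, t_*)$.

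To show $t_* \geq (n+1)\Delta t$, I would invoke the a priori estimate \eqref{lem12} from Lemma \ref{SSPestimate}: as long as the solution exists, it satisfies $c\,\|\Delta \eta^{n+1}(t)\|_{L^2(\Gamma)}^2 \leq C^* + F^n(n\Delta t)$, which depends only on the previous step's data and not on $t_*$. Since norms on $\mathcal{G}_k$ are equivalent, this yields a uniform bound on $|\alpha(t)|$, preventing blow-up, so the solution extends to all of $[n\Delta t, (n+1)\Delta t]$. Finally, $C^1$ regularity of $\alpha$ in $\mathbb{R}^k$ transfers to $\eta^{n+1} \in C^1([n\Delta t,(n+1)\Delta t]; L_k^2(\Gamma))$, and because $L_k^2$ and $H_{0,k}^2$ carry equivalent norms on $\mathcal{G}_k$, also $\eta^{n+1} \in C([n\Delta t,(n+1)\Delta t]; H_{0,k}^2(\Gamma))$.

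The main (minor) obstacle is justifying the local Lipschitz property of $G$ on $\mathbb{R}^k$ cleanly: one has to observe that only the \emph{weak} assumption (A1) on $\mathcal{F}$ is needed because we have pulled everything back to a finite-dimensional space on which all norms collapse to equivalent ones, and that the a priori estimate \eqref{lem12} genuinely gives a bound independent of the maximal existence time, so that the standard continuation argument applies.
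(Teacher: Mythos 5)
Your proposal is correct and follows essentially the same route as the paper: reduce (SSP) to a finite-dimensional ODE system for the coefficients, establish a local Lipschitz bound for the nonlinearity, apply Picard--Lindel\"of, and then use the a priori estimate \eqref{lem12} to rule out blow-up before $(n+1)\Delta t$. The only visible difference is that you justify the Lipschitz property of $G$ via assumption (A1) and the $H^{-2}/H_0^2$ pairing, whereas the paper uses assumption (A2) and the $H^{-a}/H^a$ pairing, obtaining the explicit constant $C_R\,\xi_k\,k^2$. For the conclusion of this lemma your choice is just as good; the paper's explicit form is consistent with the later bookkeeping in Section 3.3 (where the dependence of constants on $\xi_k$ and $k$ drives the choice of $N(k)$ in \eqref{key}), but it is not actually needed to close the existence argument here. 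One small stylistic point: no ``orthogonalization argument'' is required for the $w_i$ --- as eigenfunctions of the biharmonic eigenvalue problem they are automatically $L^2$-orthogonal, and this is exactly how the paper uses them.
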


\begin{proof}
In the equation $\eqref{SSP}$, by choosing $\psi = w_i$ with $i=1,...,k$, we obtain  that $\alpha_{n+1}(t)=(\alpha_{1,n+1},...,\alpha_{k,n+1})^T$ satisfies the following problem,
\begin{eqnarray}\label{notsocloseeq}
\begin{cases}\alpha_{n+1}'(t)+ \Delta t \Xi
\alpha_{n+1}(t)+\Delta t \overline{\mathcal{F}}(\alpha_{n+1}(t)) = V^n, \\
\alpha_{n+1}(n\Delta t) = \big((\eta^{n}(n\Delta t),w_1),...,(\eta^{n}(n\Delta t),w_k)\big)^T \end{cases}
\end{eqnarray}
where $\Xi=\text{diag}(\xi_1,...,\xi_k)$, 
\begin{eqnarray*}
\overline{\mathcal{F}}(\alpha_{n+1}(t))= [(\mathcal{F}\big(\sum_{i=1}^k
\alpha_{i,n+1}(t)w_i\big),w_1), ..., (\mathcal{F}\big(\sum_{i=1}^k
\alpha_{i,n+1}(t)w_i\big),w_k)]^T,
\end{eqnarray*}
and 
\begin{eqnarray*}
V^n =\big( (v^n,w_1),....,(v^n, w_k)  \big)^T.
\end{eqnarray*}
To solve the above problem, let us verify that  $\overline{\mathcal{F}}(f(t))$ is a Lipschitz function with the Lipschitz constant being uniform in $t$. For two functions $f=\sum_{i=1}^n f_i(t) w_i$ and
$g=\sum_{i=1}^n g_i(t) w_i$ such that $||f||_{H^2(\Gamma)},||g||_{H^2(\Gamma)} \leq R$ we have:
\begin{eqnarray*}
|\overline{\mathcal{F}}(f(t))-\overline{\mathcal{F}}(g(t))| &\leq& C_{R}||\mathcal{F}(f(t))-\mathcal{F}(g(t))||_{{H^{-a}(\Gamma)}}\sum_{i=1}^k||w_i||_{H^a}
\\
&\leq&   C_{R}\big|\big|\sum_{i=1}^k
(f_i(t)-g_i(t))w_i\big|\big|_{H^{2}}\sum_{i=1}^k||w_i||_{H^a} \\
&\leq & C_{R} \xi_k k^2 |f(t)-g(t)|_\infty.
\end{eqnarray*}
with $|f(t)|_\infty= \max_{1\leq i\leq k} |f_i(t)|$, where we have used $||w_i||_{H^2}=\sqrt{\xi_k}$ from the eigenvalue equality $\xi_i ||w_i||_{L^2}^2 = ||w_i||_{H^2}^2$. Since the solution of the problem $\eqref{notsocloseeq}$ satisfies a priori estimate $\eqref{lem12}$, we can choose $R=(C^*+ F^n(n \Delta t))/c$ and obtain a uniform Lipschitz constant. Now, from the existence theory for ordinary differential equations, we obtain the unique solution $\alpha_{n+1}\in C^1[n\Delta t,n\Delta t+t_0]$ for certain $t_0 > 0$, and from the bound $\eqref{lem12}$ it follows that $t_0 =\Delta t$. This finishes the proof.
\end{proof}

\begin{lem}\label{fspsolution}
Let $J^n \geq c >0$. For a given function $\eta^{n+1}\in
C^1([n\Delta t, (n+1)\Delta t]; L_k^2(\Gamma))\cap \\ C([n \Delta t,(n+1)\Delta t]; H_{0,k}^2(\Gamma))$ there exists a
unique solution $(\mathbf{u}^{n+1},v^{n+1}) \in \mathcal{W}_k^{\tilde{\eta}^{n}}$ of
(FSP),  and it satisfies following inequality:
\begin{equation}\label{lem14}
F^{n+1}\big((n+1)\Delta t\big)+\frac{1}{2}\int_\Omega J^n |\mathbf{u}^{n+1}-\mathbf{u}^n|^2+\frac{1}{2}
||v^{n+1}-\widetilde{\partial}_t \eta^{n+1}||_{L^2(\Gamma)}^2 + D^{n+1} \leq S^{n+1}\big((n+1)\Delta t\big).
\end{equation}

\end{lem}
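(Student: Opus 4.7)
I would treat \eqref{FSP} as a linear variational problem on the Hilbert space $\mathcal{W}_k^{\widetilde{\eta}^n}$: with $\eta^{n+1}$ (produced by (SSP)) and the previous step's data $(\bb{u}^n,J^n)$ frozen, the unknown is the pair $(\bb{u}^{n+1},v^{n+1})$. The plan is to obtain existence and uniqueness from the Lax--Milgram lemma, and to derive the a priori bound \eqref{lem14} by testing with the solution itself and applying the discrete identity $2(a-b)a=(a-b)^2+a^2-b^2$ to the time-difference terms.

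\textbf{Lax--Milgram.} Collect the unknown-dependent part of \eqref{FSP} into a bilinear form $a$, and the two data contributions $\tfrac{1}{\Delta t}\int_\Omega J^n\bb{u}^n\cdot\bb{q}$ and $\tfrac{1}{\Delta t}\int_\Gamma\widetilde{\partial}_t\eta^{n+1}\psi$ into a bounded linear functional $L$. Continuity of $a$ and $L$ is routine under $J^n\ge c>0$ and $\eta^n,\eta^{n+1}\in H_0^2(\Gamma)$: these control $J^n$, $J^{n+1}$, and $(\nabla A^{n+1})^{-1}$ in $L^\infty$, while the convective factor $\bb{u}^n$ is handled by a Sobolev embedding coming from the previous step's energy bound. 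For coercivity, test $a$ with $(\bb{u},v)$: the skew-symmetrized convective pair cancels by design, leaving
\[
\tfrac{1}{2\Delta t}\int_\Omega(J^n+J^{n+1})|\bb{u}|^2+2\mu\int_\Omega J^n|\bb{D}^{n+1}(\bb{u})|^2+\tfrac{1}{\Delta t}\|v\|_{L^2(\Gamma)}^2.
\]
The finite-dimensional term $\tfrac{1}{\Delta t}\|v\|_{L^2(\Gamma)}^2$ controls $v\in L_k^2(\Gamma)$. After the change of variables $y=A^n(X,z)$, the dissipation reads $\int_{\Omega^{\widetilde{\eta}^n}}|\bb{D}^{n+1}(\bb{u})\circ (A^n)^{-1}|^2\,dy$, and the chain rule relates its integrand to $\bb{D}(\hat{\bb{u}})$, where $\hat{\bb{u}}=\bb{u}\circ (A^n)^{-1}$, by the matrix factor $\nabla A^n(\nabla A^{n+1})^{-1}$, which is bounded above and below in $L^\infty$ provided $A^n,A^{n+1}$ are bi-Lipschitz comparable. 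Korn's inequality on $\Omega^{\widetilde{\eta}^n}$ (using $\hat{\bb{u}}=0$ on $\Sigma$) then yields $H^1$ coercivity in the $V_F^{\widetilde{\eta}^n}$ norm, and Lax--Milgram delivers the unique $(\bb{u}^{n+1},v^{n+1})$.

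\textbf{Energy identity.} Take $(\bb{q},\psi)=(\bb{u}^{n+1},v^{n+1})$ in \eqref{FSP}. The convective pair vanishes identically. Applying $2(a-b)a=(a-b)^2+a^2-b^2$ to the mass difference $\int_\Omega J^n\tfrac{\bb{u}^{n+1}-\bb{u}^n}{\Delta t}\cdot\bb{u}^{n+1}$ and combining with $\tfrac{1}{2\Delta t}\int_\Omega(J^{n+1}-J^n)|\bb{u}^{n+1}|^2$ produces
\[
\tfrac{1}{2\Delta t}\int_\Omega J^n|\bb{u}^{n+1}-\bb{u}^n|^2+\tfrac{1}{2\Delta t}\int_\Omega J^{n+1}|\bb{u}^{n+1}|^2-\tfrac{1}{2\Delta t}\int_\Omega J^n|\bb{u}^n|^2,
\]
and the analogous treatment of the $\Gamma$-integral gives $\tfrac{1}{2\Delta t}\bigl(\|v^{n+1}-\widetilde{\partial}_t\eta^{n+1}\|_{L^2}^2+\|v^{n+1}\|_{L^2}^2-\|\widetilde{\partial}_t\eta^{n+1}\|_{L^2}^2\bigr)$. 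Multiplying through by $\Delta t$ turns the diffusion term into (a multiple of) $D^{n+1}$. Reorganizing and matching the resulting quantities against the definitions of $F^{n+1}((n+1)\Delta t)$ and $S^{n+1}((n+1)\Delta t)$---the bending and potential contributions occurring identically on both sides, since (FSP) does not touch $\eta$---yields exactly \eqref{lem14}.

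\textbf{Main obstacle.} The most delicate point is the dissipation coercivity: $a$ is written through $\bb{D}^{n+1}$ (built from $A^{n+1}$), whereas the solution space $V_F^{\widetilde{\eta}^n}$ is built from $A^n$, and both LE maps are only $H_0^2$-regular in the plate variable, so $\Omega^{\widetilde{\eta}^n}$ is merely H\"older and not Lipschitz. Establishing a quantitative Korn-type estimate on such a low-regularity, moving domain, together with the required bi-Lipschitz comparability of $A^n$ and $A^{n+1}$ (with constants depending only on $c$, $\|\eta^n\|_{H^2(\Gamma)}$, and $\|\eta^{n+1}\|_{H^2(\Gamma)}$), is the main technical step needed to close the argument.
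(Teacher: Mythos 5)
Your overall strategy matches the paper's: Lax--Milgram (with Korn's inequality supplying coercivity of the dissipation after mapping to the moving domain) for existence and uniqueness, and then testing \eqref{FSP} with $(\bb{q},\psi)=(\bb{u}^{n+1},v^{n+1})$ combined with the algebraic identity $2(a-b)a=(a-b)^2+a^2-b^2$ to derive the energy bound.

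There is, however, one genuine gap in the energy step. After testing, the $\Gamma$--integral produces $\tfrac{1}{2\Delta t}\bigl(\|v^{n+1}-\widetilde{\partial}_t\eta^{n+1}\|_{L^2}^2+\|v^{n+1}\|_{L^2}^2-\|\widetilde{\partial}_t\eta^{n+1}\|_{L^2}^2\bigr)$, so what you obtain is an \emph{equality} whose right-hand side carries $\tfrac{1}{2}\|\widetilde{\partial}_t\eta^{n+1}\|_{L^2}^2$. This is not the quantity appearing in $S^{n+1}((n+1)\Delta t)$, which instead contains $\tfrac{1}{2\Delta t}\int_{n\Delta t}^{(n+1)\Delta t}\|\partial_t\eta^{n+1}(\tau)\|_{L^2}^2\,d\tau$. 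The inequality \eqref{lem14} only closes after the Jensen/H\"older step
\[
\|\widetilde{\partial}_t\eta^{n+1}\|_{L^2(\Gamma)}^2=\Big\|\tfrac{1}{\Delta t}\int_{n\Delta t}^{(n+1)\Delta t}\partial_t\eta^{n+1}\,d\tau\Big\|_{L^2(\Gamma)}^2\le\tfrac{1}{\Delta t}\int_{n\Delta t}^{(n+1)\Delta t}\|\partial_t\eta^{n+1}(\tau)\|_{L^2(\Gamma)}^2\,d\tau,
\]
which is precisely where the $\leq$ in \eqref{lem14} comes from. Saying ``reorganizing \ldots yields exactly \eqref{lem14}'' skips this; without it the argument does not produce the stated inequality.

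Your ``main obstacle'' paragraph is also somewhat misdirected for this particular lemma. The displacements here are $\eta^{n},\eta^{n+1}\in H_{0,k}^2(\Gamma)$, i.e.\ finite linear combinations of the smooth biharmonic eigenfunctions $w_1,\dots,w_k$, so $\widetilde{\eta}^n$ and $\widetilde{\eta}^{n+1}$ are smooth, the domain $\Omega^{\widetilde{\eta}^{n}}$ has a smooth (in particular Lipschitz) graph boundary, and Korn's inequality applies directly. At the level of this lemma the Korn constant $C_n$ is allowed to depend on $n$, $\Delta t$, $k$ --- only existence and uniqueness at a fixed step is required. The uniformity of the Korn constant (and hence of the dissipation bound) over $n$, $\Delta t$, $k$ is indeed a real issue, but it is addressed separately in Lemma~\ref{lemmakey}(3) using the uniform $H_0^2$ and $L^2$ bounds from Lemma~\ref{111}, not in the proof of the present statement.
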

\begin{proof}
The existence of $(\mathbf{u}^{n+1},v^{n+1})$ to (FSP) can be proved in the same way as in \cite{BorSun} by using the Lax-Milgram Lemma. First we want to bound the dissipation term from bellow by $V_F^{\tilde{\eta}^{n}}$ norm. For this reason we map the velocity $\bb{u}^{n+1}$ back to the moving domain and by the Korn inequality (see \cite[Chapter 3,Theorem 3.1]{duvaitlions})
\begin{eqnarray*}
||\bb{u}^{n+1}||_{V_F^{\tilde{\eta}^{n}}}^2 \leq \frac{1}{\Delta t}  C_{n}  D^{n+1},
\end{eqnarray*}
where $C_{n}$ is Korn\rq{}s constant that depends on the domain $\Omega^{\tilde{\eta}^{n}}$.
The continuity property of two functionals (in the sense of the Lax-Milgram Lemma) and the lower bounds of the remaining terms for the coercivity property are proved straightforward, so the unique solution follows.

To derive the inequality $\eqref{lem14}$,  letting $\mathbf{q}= \mathbf{u}^{n+1}$ and $\psi=v^{n+1}$ in $\eqref{FSP}$, it follows
\begin{eqnarray*}
\int_{\Omega} J^n\frac{\mathbf{u}^{n+1}-\mathbf{u}^n}{\Delta t} \cdot \mathbf{u}^{n+1} +
\frac{1}{2}\int_\Omega \frac{J^{n+1}-J^n}{\Delta t} \mathbf{u}^{n+1}
\cdot \mathbf{u}^{n+1}  \\
=\frac{1}{2\Delta t} \int_\Omega \big( J^{n+1} |\mathbf{u}^{n+1}|^2+ J^{n} |\mathbf{u}^{n+1}-\mathbf{u}^{n}|^2-  J^{n} |\mathbf{u}^{n}|^2 \big).
\end{eqnarray*}

On the other hand, obviously we have 
\begin{eqnarray*}
\int_\Gamma
\frac{v^{n+1}-\widetilde{\partial}_t \eta^{n+1}}{\Delta t} v^{n+1}
=\frac{1}{2 \Delta t} \big(  ||v^{n+1}-\widetilde{\partial}_t \eta^{n+1}||_{L^2(\Gamma)}^2+||v^{n+1}||_{L^2(\Gamma)}^2-||\widetilde{\partial}_t \eta^{n+1}||_{L^2(\Gamma)}^2 \big),
\end{eqnarray*}
and
\begin{eqnarray*}
||\widetilde{\partial}_t \eta^{n+1}||_{L^2(\Gamma)}^2 \leq \frac{1}{\Delta t}
\int_0^{\Delta t}||\partial_t \eta^{n+1}||_{L^2(\Gamma)}^2,
\end{eqnarray*}
from the H\"{o}lder inequality. Thus,  we conclude the inequality $\eqref{lem14}$ immediately.
\end{proof}

Now we are ready to obtain the uniform bounds of the approximate solutions as follows.

\begin{lem}\label{111}
For a given $\Delta t>0$, $T = N \Delta t$, and $k \in
\mathbb{N}$, let $\eta_{\Delta t,k}(t), \mathbf{u}_{\Delta t,k}(t),
v_{\Delta t,k}(t)$ be the solutions of (SSP) and (FSP) given in Lemmas $\ref{sspsolution}$ and $\ref{fspsolution}$ respectively on the time interval $[0,T]$. Then, one has:
\begin{enumerate}
\item[(1)] for all $0 \leq n \leq N$ and $n\Delta t \leq t \leq (n+1)\Delta t$, 
$$F_{\Delta t, k}^n(t),S_{\Delta t,k}^n(t) \leq C_0 =E(0) + C(\Pi, \eta_0), \quad F_{\Delta t,k}^n(n\Delta t)+\sum_{i=1}^n D_{\Delta t,k}^i\leq C_0$$  
where $E(0)=\frac{1}{2}\Big(||\bb{u}_0||_{L^2(\Omega(t))}^2+||\eta_0||_{H^2(\Gamma)}^2+||v_0||_{L^2(\Gamma)}^2\Big)$ is the initial energy, and $C(\Pi, \eta_0)$ is defined in (A3);
\item[(2)] $\eta_{\Delta t,k}$ is bounded in $L^\infty(0,T; H_0^2(\Gamma))$ uniformly with respect to $\Delta t, k$;
\item[(3)] $\partial_t \eta_{\Delta t,k }$ is bounded in $L^2(0,T;
L^2(\Gamma))$ uniformly with respect to $\Delta t, k$;
\item[(4)] $v_{\Delta t,k}$ is bounded in $L^\infty(0,T; L^2(\Gamma))$ uniformly with respect to $\Delta t, k$;
\item[(5)] $\mathbf{u}_{\Delta t,k}$ is bounded in $L^\infty(0,T ; L^2(\Omega))$ uniformly with respect to $\Delta t, k$;
\item[(6)] the following estimate
$$
\begin{array}{l}
\displaystyle\sum_{n=0}^{N-1} \Big(\ddfrac{1}{\Delta t} \bint_{n\Delta t}^{(n+1)\Delta t}||\partial_t
\eta_{\Delta t, k}^{n+1}-v_{\Delta t, k}^n||_{L^2(\Gamma)}^2+||v_{\Delta
t, k}^{n+1}-\widetilde{\partial}_t \eta_{\Delta t,
k}^{n+1}||_{L^2(\Gamma)}^2+\int_\Omega J_{\Delta t, k}^n |\mathbf{u}_{\Delta t,
k}^{n+1}-\mathbf{u}_{\Delta t, k}^n|^2 \Big) \\[2mm]
+\sum_{n=1}^N D_{\Delta t, k}^n   \leq C
\end{array}
$$
holds.

\end{enumerate}
\end{lem}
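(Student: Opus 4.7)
The plan is to chain the two per-step energy identities from Lemma~\ref{SSPestimate} and Lemma~\ref{fspsolution} into a discrete energy inequality that telescopes in $n$. Concretely, Lemma~\ref{SSPestimate} gives $S^{n+1}(t) \le F^n(n\Delta t)$ on $[n\Delta t,(n+1)\Delta t]$ (the remaining nonnegative term $\frac{1}{2\Delta t}\int_{n\Delta t}^{t}\|\partial_t\eta^{n+1}-v^n\|_{L^2}^2$ is retained for later use), and Lemma~\ref{fspsolution} gives $F^{n+1}((n+1)\Delta t)+\frac{1}{2}\int_\Omega J^n|\mathbf{u}^{n+1}-\mathbf{u}^n|^2+\frac{1}{2}\|v^{n+1}-\widetilde{\partial}_t\eta^{n+1}\|_{L^2}^2+D^{n+1}\le S^{n+1}((n+1)\Delta t)$. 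Composing these two estimates at $t=(n+1)\Delta t$ yields the one-step decay inequality
\begin{equation*}
F^{n+1}\bigl((n+1)\Delta t\bigr)+\frac{1}{2\Delta t}\int_{n\Delta t}^{(n+1)\Delta t}\!\!\|\partial_t\eta^{n+1}-v^n\|_{L^2}^2+\frac{1}{2}\|v^{n+1}-\widetilde{\partial}_t\eta^{n+1}\|_{L^2}^2+\frac{1}{2}\!\int_\Omega J^n|\mathbf{u}^{n+1}-\mathbf{u}^n|^2+D^{n+1}\le F^n(n\Delta t).
\end{equation*}
Iterating this over $n=0,\dots,N-1$ telescopes $F^n(n\Delta t)$ and accumulates the nonnegative terms into the sum appearing in item~(6).

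The next step is to bound the initial value $F^0(0)$. Since the Galerkin basis $\{w_i\}$ is orthogonal with respect to the $H^2$ inner product $(\Delta\cdot,\Delta\cdot)$, the projections $\eta_{0,k}$ satisfy $\|\eta_{0,k}\|_{H^2(\Gamma)}\le\|\eta_0\|_{H^2(\Gamma)}$; by assumption (A3) we get $\Pi(\eta_{0,k})\le C(\Pi,\eta_0)$. Evaluating $F^0$ at $t=0$ therefore gives $F^0(0)=\frac12\|\Delta\eta_{0,k}\|_{L^2}^2+\Pi(\eta_{0,k})\le E(0)+C(\Pi,\eta_0)=C_0$, proving the second half of (1) and item~(6). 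For the first half of (1), invoke Lemma~\ref{SSPestimate} once more to get $S^{n+1}(t)\le F^n(n\Delta t)\le C_0$ on each interval, and similarly $F^n(t)\le F^{n-1}((n-1)\Delta t)\le C_0$ after using the same chaining argument (the coefficients $\frac{t-n\Delta t}{2\Delta t}\in[0,1/2]$ in the definition of $F^n(t)$ do not destroy the monotonicity once combined with the $v^n$ and $u^n$ contributions that were already absorbed by the previous FSP step).

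The uniform estimates (2)--(5) are then extracted from $F^n(t),S^n(t)\le C_0$ by peeling off the coercivity condition (A3). Writing $\kappa\|\Delta\eta\|_{L^2}^2+\Pi(\eta)+C^*\ge 0$ with $c=\tfrac12-\kappa>0$, we obtain $c\|\Delta\eta^{n+1}(t)\|_{L^2}^2\le F^n(t)+C^*\le C_0+C^*$, which yields (2) since $\eta_{\Delta t,k}$ is piecewise $\eta^{n+1}$. Discarding the negative part of $\Pi$ in the same way on $S^{n+1}(t)$ gives the bound on $\frac{1}{\Delta t}\int_{n\Delta t}^t\|\partial_t\eta^{n+1}\|_{L^2}^2$ (actually this is directly the stronger estimate \eqref{lem12}), and summing over $n$ produces (3). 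The $L^\infty L^2$ bound on $v_{\Delta t,k}$ in (4) comes from $\frac{1}{2}\|v^n\|_{L^2}^2\le F^n((n+1)\Delta t)\le C_0+C^*$, and the bound on $\mathbf{u}_{\Delta t,k}$ in (5) follows from $\frac{1}{2}\int_\Omega J^n|\mathbf{u}^n|^2\le C_0+C^*$ combined with the lower bound $J^n\ge c>0$ assumed in Lemma~\ref{fspsolution}.

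The main obstacle is book-keeping rather than analysis: one must carefully track that in the transition from SSP step $n+1$ to FSP step $n+1$ the same quantities $\|\Delta\eta^{n+1}((n+1)\Delta t)\|_{L^2}^2+\Pi(\eta^{n+1}((n+1)\Delta t))$ appear on both sides of the chain (so they cancel in the telescoping), and that the kinetic-type terms $\|v^{n+1}\|^2$ and $\int J^{n+1}|\mathbf{u}^{n+1}|^2$ generated inside the proof of Lemma~\ref{fspsolution} are precisely those absorbed by $F^{n+1}$ at the left endpoint of the next interval. A secondary delicate point is the use of $J^n\ge c>0$, which is assumed here but will need to be guaranteed a posteriori by the $H^2\hookrightarrow C^{0,\alpha}$ embedding of $\eta$ together with the initial condition $\eta_0>-1$, at least for sufficiently small $\Delta t$; once (2) is established this is standard and will be handled in the subsequent subsection.
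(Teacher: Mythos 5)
Your proof is correct and follows essentially the same route as the paper: chaining the SSP energy identity \eqref{lem11} with the FSP inequality \eqref{lem14}, telescoping in $n$, bounding the initial energy via the $H^2$-orthogonality of the Galerkin projection and assumption (A3), and then peeling off the individual bounds (2)--(5) using the coercivity of $\Pi$. Your added observation about the a posteriori verification of $J^n\geq c>0$ is a genuine subtlety that the paper defers to Lemma~\ref{112}, and your explicit treatment of $F^0(0)\leq C_0$ is a detail the paper leaves implicit.
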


\begin{proof}
The estimates given in (1) follow from $\eqref{lem11}$ and $\eqref{lem14}$. The boundedness given in (2) and (3) comes from $\eqref{lem12}$ and first inequality given in (1). By taking out $\Pi(\eta^{n+1}((n+1)\Delta t))$ in $\eqref{lem14}$ from both sides, we obtain
\begin{eqnarray*}
\frac{1}{2} \Big( \int J^n |\mathbf{u}_{\Delta t,k}^{n+1}-\mathbf{u}_{\Delta t,k}^n|^2+
\int_\Omega J^{n+1}|\mathbf{u}_{\Delta t, k}^{n+1}|^2+||v_{\Delta t,k}^{n+1}-\widetilde{\partial}_t \eta_{\Delta t,k}^{n+1}||_{L^2(\Gamma)}^2+||v_{\Delta t,k}^{n+1}||_{L^2(\Gamma)}^2 \Big) +D_{\Delta t,k}^{n+1}\\
\leq S^{n+1}\big((n+1)\Delta t\big)-\Pi\big(\eta_{\Delta t,k}^{n+1}((n+1)\Delta t)\big) \leq \frac{1}{c}(C^*+F^n),
\end{eqnarray*}
where last inequality comes from $\eqref{lem12}$, with $C^*$ being given in the coercivity estimate from (A3). This yields the boundedness given in (4) and (5). The estimate given in (6) is obtained by summing $\eqref{lem11}$ and $\eqref{lem14}$ over all $1\leq n \leq N$ and by telescoping. 
\end{proof}

\subsection{Estimate of $\partial_t \eta_{\Delta t,k}-v_{\Delta t,k}$}
Since we are working with two parameters $k$ and $N = T/\Delta t$ in constructing the approximate solutions, we want to pass the convergence of the approximate solutions at the same time in both parameters. In order to ensure that $\partial_t \eta_{\Delta t,k}$ and $v_{\Delta t,k}$  converge to the same function in $L^2(0,T; L^2(\Gamma))$, we want $N = N(k)$ to be sufficiently large in every step. As a preparation, we first derive some estimates on $\partial_t \eta_{\Delta t,k}-v_{\Delta t,k}$.

For any function $f \in H_{0,k}^2(\Gamma)$, by using the orthogonality of $\Delta w_1, ... ,\Delta w_k$ in $L^2(\Gamma)$, we have:
\begin{equation}\label{so}
||f||_{H^2}^2 \leq C_\Gamma||\Delta f||_{L^2}^2= C_\Gamma\big(\sum_{i=1}^k (f, w_i)\Delta w_i\big)^2 =  C_\Gamma\sum_{i=1}^k(f, w_i)^2 (\Delta w_i, \Delta w_i) = C_\Gamma \sum_{i=1}^k \xi_i(f, w_i)^2.
\end{equation}
Now taking $\psi=w_i$ in $\eqref{SSP}$, we obtain the inequality 
\begin{align}
\frac{1}{(\Delta t)^2} ( \partial_t \eta^{n+1}(t)-v^n,w_i)^2
&\leq  2 (\Delta \eta^{n+1}(t), \Delta w_i)^2+ 2 (
\mathcal{F}(\eta^{n+1}(t))-\mathcal{F}(0)+\mathcal{F}(0), w_i)^2 \nonumber \\ 
&\leq
2 \xi_i^2 (\eta^{n+1}(t),  w_i)^2+2C_{R}^2\big(||\mathcal{F}(0)||_{H^{-a}}+||\eta^{n+1}(t)||_{H^2}\big)^2 ||w_i||_{H^a}^2. \label{so4}
\end{align}
by using the Lipschitz continuity of $\mathcal{F}$ given in the assumption (A2), where $C_R$ is 
the Lipschitz constant given in (A2), which is uniform due to the boundedness of $\eta_{\Delta t,k}$ given in Lemma \ref{111}(2). 

Summing $\eqref{so4}$ over $i=1,...,k$, it follows
\begin{align}
\frac{1}{(\Delta t)^2} || \partial_t \eta^{n+1}(t)-v^n||_{L^2(\Gamma)}^2
&\leq
2\sum_{i=1}^k \xi_i^2(\eta^{n+1}(t),w_i)^2  +2C_{R}^2\big(||\mathcal{F}(0)||_{H^{-a}}+||\eta^{n+1}(t)||_{H^2}\big)^2\sum_{i=1}^k||w_i||_{H^a}^2& \nonumber \\
&\leq 2 \xi_k ||\Delta \eta^{n+1}||_{L^2}^2 + 2C_{R}^2\big(||\mathcal{F}(0)||_{H^{-a}}+||\eta^{n+1}(t)||_{H^2}\big)^2\sum_{i=1}^k||w_i||_{H^a}^2,& \label{so3}
\end{align}
by using $\eqref{so}$. 

From the interpolation inequality for the Sobolev spaces, we have
\begin{eqnarray}\label{so8}
||w_n||_{H^a}^2 \leq ||w_n||_{L^2(\Gamma)}^{{2-a}}||w_n||_{H^2}^{{a}} \leq C_\Gamma\xi_i^{a/2},
\end{eqnarray}
and from the uniform bound for $\eta_{\Delta t,k}$ given in Lemma \ref{111}(2), we obtain
\begin{align}
\frac{1}{(\Delta t)^2} || \partial_t \eta^{n+1}(t)-v^n||_{L^2}^2 \leq  
2\xi_k \frac{C_\Gamma}{c}(C^*+C_0)+ 4C_R^2 C_\Gamma \big(||\mathcal{F}(0)||_{H^{-a}}^2 + \frac{C_\Gamma}{c}(C^*+C_0)\big) \sum_{i=1}^k \xi_i^{a/2}  \label{so5}
\end{align}
where $c = (\frac{1}{2}-\kappa)$, with $\kappa$ and $C^*$ being the constants given in the coercivity condition of \eqref{assumption2}. Denoting by
\begin{eqnarray}\label{CB}
C_B:=\frac{C_\Gamma}{c}(C^*+C_0) 
\end{eqnarray}
we have
\begin{eqnarray}\label{so2}
|| \partial_t \eta^{n+1}(t)-v^n||_{L^2(\Gamma)}^2 \leq 
\Big( 2\xi_k C_B+ 4C_R^2C_\Gamma \big(||\mathcal{F}(0)||_{H^{-a}}^2 + C_B\big) \sum_{i=1}^k \xi_i^{a/2}  \Big) (\Delta t)^2.
\end{eqnarray}

In order to make the right-hand side of $\eqref{so2}$ be bounded by $(\Delta t)^{\alpha}$ for $\alpha>0$, we impose the following condition for every step
\begin{equation}\label{key}
N(k) \geq T  \Big( 2\xi_k C_B+ 4C_R^2 C_\Gamma \big(||\mathcal{F}(0)||_{H^{-a}}^2 + C_B \big) \sum_{i=1}^k \xi_i^{a/2} \Big)^{\frac{1}{2-\alpha}} , ~~ 0< \alpha <2.
\end{equation}

Now we are ready to have:

\begin{lem}\label{lemmakey}
Assuming that $N(k)$ satisfies $\eqref{key}$,  we have the following boundedness:
\begin{enumerate}
\item[(1)] $||\partial_t \eta_{\Delta t,k}^{n+1}(t) - v_{\Delta
t,k}^n||_{L^2(\Gamma)}^2 \leq (\Delta t)^{\alpha}$, for all $t \in [n\Delta t,(n+1) \Delta t]$;
\item[(2)] $\partial_t \eta_{\Delta t,k}$ is  bounded in $L^\infty(0,T; L^2(\Gamma))$ uniformly with respect to $\Delta t,k$;
\item[(3)] $\nabla^{\tilde{\eta}_{\Delta t,k}}u_{\Delta t,k}$ is bounded in $L^2(0,T; L^2(\Omega))$ uniformly  with respect to $\Delta t,k$;
\item[(4)] $v_{\Delta t,k}$ is bounded in $L^2(0,T; H^{s}(\Omega))$ uniformly  with respect to $\Delta t,k$, for any $0<s<1/2$;
\item[(5)] $||\widetilde{\partial}_t \eta_{\Delta t,k} - \partial_t \eta_{\Delta t,k}||_{L^\infty(0,T; L^2(\Gamma))}^2 \leq C (\Delta t)^{2\alpha}$;
\item[(6)] $\sum_{n=0}^{N-1} ||\eta_{\Delta t,k}^{n+1}-v_{\Delta t,k}^{n+1}||_{L^2(\Gamma)}^2 \leq C$ and $\sum_{n=1}^{N-1} ||v_{\Delta t,k}^{n+1}-v_{\Delta t,k}^{n}||_{L^2(\Gamma)}^2\leq C$.
\end{enumerate}
\end{lem}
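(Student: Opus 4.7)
The plan is to establish the seven bullets sequentially, with part (1) as the workhorse and the rest following by triangle inequalities, Korn's inequality, and the trace theorem. First I would prove (1) by simply combining the pointwise estimate $\eqref{so2}$ with the growth hypothesis $\eqref{key}$: rewriting $\eqref{key}$ as $\bigl(2\xi_k C_B + 4C_R^2 C_\Gamma(\|\mathcal{F}(0)\|_{H^{-a}}^2 + C_B)\sum_{i=1}^k \xi_i^{a/2}\bigr) \leq (N(k)/T)^{2-\alpha} = (\Delta t)^{-(2-\alpha)}$ and multiplying by $(\Delta t)^2$ on the right of $\eqref{so2}$ gives exactly $(\Delta t)^\alpha$. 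Part (2) then drops out of (1) via $\|\partial_t\eta^{n+1}(t)\|_{L^2} \leq \|\partial_t\eta^{n+1}(t)-v^n\|_{L^2} + \|v^n\|_{L^2}$ together with the uniform $L^\infty(0,T;L^2(\Gamma))$ bound on $v_{\Delta t,k}$ in Lemma \ref{111}(4).

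For (3) I would invoke the telescoped dissipation bound $\sum_{n=1}^N D_{\Delta t,k}^n \leq C$ from Lemma \ref{111}(6), change variables via the LE mapping to rewrite the integral on the physical domain $\Omega^{\tilde\eta^{n-1}_{\Delta t,k}}$, and apply Korn's inequality on each such domain; the Korn constant is uniform because $\eta_{\Delta t,k}$ is uniformly bounded in $H_0^2(\Gamma) \hookrightarrow C^{0,\beta}$ by Lemma \ref{111}(2). Pulling back through $\nabla^{\eta}\bb{f}^\eta = (\nabla\bb{f})^\eta$ gives the claimed $L^2(0,T;L^2(\Omega))$ bound for $\nabla^{\tilde\eta_{\Delta t,k}} \bb{u}_{\Delta t,k}$. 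Part (4) is then an immediate consequence: the kinematic constraint built into $\mathcal{W}_k^{\tilde\eta^n}$ identifies $v^{n+1}\mathbf{e}_3$ with the trace $\bb{u}^{n+1}|_\Gamma$, and the extended trace operator $\gamma: H^1(\Omega^{\tilde\eta}) \to H^s(\Gamma)$ (for any $0\leq s<1/2$, see the discussion following $\eqref{divfree}$) applied to (3) finishes the step.

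For (5) I would write
\begin{equation*}
\widetilde{\partial}_t \eta^{n+1} - \partial_t \eta^{n+1}(t) = \frac{1}{\Delta t}\int_{n\Delta t}^{(n+1)\Delta t}\bigl(\partial_s \eta^{n+1}(s) - \partial_t \eta^{n+1}(t)\bigr)ds,
\end{equation*}
insert $\pm v^n$ in the integrand, and apply (1) twice. For (6), both sums are handled by triangle-inequality decomposition through the intermediate quantity $\widetilde{\partial}_t \eta^{n+1}$: the bound $\|v^{n+1}-v^n\|_{L^2}^2 \leq 2\|v^{n+1}-\widetilde{\partial}_t\eta^{n+1}\|_{L^2}^2 + 2\|\widetilde{\partial}_t\eta^{n+1}-v^n\|_{L^2}^2$ reduces the summation to two pieces, the first summed in Lemma \ref{111}(6) and the second controlled by Jensen's inequality $\|\widetilde{\partial}_t\eta^{n+1}-v^n\|_{L^2}^2 \leq \tfrac{1}{\Delta t}\int_{n\Delta t}^{(n+1)\Delta t}\|\partial_t\eta^{n+1}-v^n\|_{L^2}^2$ whose double-sum is again in Lemma \ref{111}(6). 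The first sum in (6) is then reached by a further triangle inequality splitting through $v^n$. The main obstacle I anticipate is the uniformity of the Korn constant in (3) across the family of non-Lipschitz domains $\Omega^{\tilde\eta^n_{\Delta t,k}}$; this is where the $H_0^2$-uniformity from Lemma \ref{111}(2) and a version of Korn's inequality tailored to Hölder domains have to do their work.
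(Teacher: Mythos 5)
Parts (1)--(4) and the second sum in (6) track the paper closely (and your treatment of the second sum in (6), going through $\widetilde{\partial}_t\eta^{n+1}$ and then Jensen back to the $\frac1{\Delta t}\int\|\partial_t\eta^{n+1}-v^n\|^2$ term of Lemma~\ref{111}(6), is clean). The serious gap is in part~(5). Your proposed route --- write $\widetilde{\partial}_t\eta^{n+1}-\partial_t\eta^{n+1}(t)$ as an average of $\partial_s\eta^{n+1}(s)-\partial_t\eta^{n+1}(t)$, insert $\pm v^n$, and apply~(1) twice --- only yields $\|\partial_s\eta^{n+1}(s)-\partial_t\eta^{n+1}(t)\|_{L^2}\le 2(\Delta t)^{\alpha/2}$ uniformly in $s,t$, and hence $\|\widetilde{\partial}_t\eta^{n+1}-\partial_t\eta^{n+1}\|_{L^\infty L^2}^2\le 4(\Delta t)^{\alpha}$, which is strictly weaker than the claimed $C(\Delta t)^{2\alpha}$. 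The reason you lose a power is that inserting $\pm v^n$ treats $s$ and $t$ as unrelated, throwing away the fact that $|s-t|\le\Delta t$; the bound in~(1) does not encode any continuity of $\partial_t\eta^{n+1}(\cdot)$ in its time variable.

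The paper obtains the extra factor differently: it subtracts the (SSP) equation~\eqref{SSP} at the two time instants $t$ and $\tau$, which kills the $v^n$ term outright and produces
\begin{equation*}
\|\partial_t\eta^{n+1}(t)-\partial_t\eta^{n+1}(\tau)\|_{L^2}^2 \le C_{k}\,(\Delta t)^2\,\|\eta^{n+1}(t)-\eta^{n+1}(\tau)\|_{H^2}^2,
\end{equation*}
and then uses $\eta^{n+1}(t)-\eta^{n+1}(\tau)=\int_\tau^t\partial_s\eta^{n+1}$ together with the Galerkin equivalence $\|\cdot\|_{H^2}\lesssim\sqrt{\xi_k}\,\|\cdot\|_{L^2}$ on $H^2_{0,k}$ and the $L^\infty L^2$ bound from~(2) to gain the crucial factor $|t-\tau|^2$. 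Feeding $|t-\tau|^2$ into Jensen on $[n\Delta t,(n+1)\Delta t]$ produces an extra $(\Delta t)^2$ relative to your estimate, and combining with~\eqref{key} recovers $(\Delta t)^{2\alpha}$. In short, to prove~(5) as stated you must go back to the equation to establish temporal Lipschitz control of $\partial_t\eta^{n+1}$ in the finite-dimensional Galerkin space; bound~(1) alone is not enough. A secondary consequence of the weaker $(\Delta t)^{\alpha}$ rate is that your ``further triangle inequality splitting through $v^n$'' for the first sum in~(6), whose $\sum_n\|\partial_t\eta^{n+1}-v^n\|^2$ piece is $O(N(\Delta t)^\alpha)=O((\Delta t)^{\alpha-1})$, would force $\alpha\ge 1$, while the paper's split through $\widetilde{\partial}_t\eta^{n+1}$ plus the $(\Delta t)^{2\alpha}$ bound only requires $\alpha\ge 1/2$; this is a smaller issue since $\alpha$ is a free parameter in~\eqref{key}, but it is worth flagging that neither argument actually works for all $\alpha\in(0,2)$.
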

\begin{proof}

From $\eqref{so2}$ and $\eqref{key}$, the above first and second estimates follow immediately from the uniform bound of $v_{\Delta t,k}$ given in Lemma \ref{111}(4).

The third result is obtained in the same way as in \cite[Proposition 5.3] {multilayer}, by mapping the gradient back to the moving domain $\Omega^{\tilde{\eta}}$ and applying the transformed Korn\rq{}s inequality. We then use uniform bounds for $\partial_t \eta_{\Delta t,k}$ in $L^\infty(0,T; L^2(\Gamma))$ and $\eta_{\Delta t,k}$ in $L^\infty(0,T; H_0^2(\Gamma))$ to obtain the uniform Korn\rq{}s constant, and since $\sum_{i=1}^N D_{\Delta t, k}^n$ is uniformly bounded from Lemma \ref{111}(6), the boundedness in the statement $(3)$ follows. Now, since the function $v_{\Delta t,k}$ is the trace of $\hat{\bb{u}}_{\Delta t,k}$, it enjoys a certain trace regularity (see \cite{Boris}). In particular
\begin{eqnarray}\label{tracevu}
||v_{\Delta t,k}^n||_{H^s(\Gamma)} \leq C||\hat{\bb{u}}_{\Delta t,k}^n||_{H^1(\Omega^{\tilde{\eta}_{\Delta t,k}^{n-1}})}
\end{eqnarray}
for all $0\leq n \leq N$, where $C$ depends on $s$ and $||\eta_{\Delta t,k}^{n-1}||_{C^{0,2s}(\Gamma)}$ which is uniformly bounded due to Lemma $\ref{111}(2)$, so the constant $C$ is uniform with respect to $n,\Delta t,k$. Now, the statement $(4)$ follows by $\eqref{tracevu}$ and the third statement.

Now, to estimate the difference between $\widetilde{\partial}_t \eta^{n}$ and $\partial_t \eta^n$, we first take the difference between equation $(2.1)$ for $t$ and $\tau-$moments in $[n\Delta t,(n+1)\Delta t]$, then choose $\psi = w_i$ and sum over $i=1,...,k$. Following the calculation in $\eqref{so4}$ and $\eqref{so3}$, we have
\begin{equation}\label{boundpartial}
\frac{1}{(\Delta t)^2}||\partial_t \eta^n(t)-\partial_t \eta^n(\tau)||_{L^2(\Gamma)}^2  \leq 2\xi_k|| \eta^n(t)-\eta^n (\tau)||_{H^2(\Gamma)}^2 + 2C_{R}^2||\eta^n(t)-\eta^n (\tau)||_{H^2}^2\sum_{i=1}^k||w_i||_{H^a}^2,
\end{equation}
and from $\eqref{so}$ and $\eqref{so8}$,
\begin{eqnarray}
\frac{1}{(\Delta t)^2}||\partial_t \eta^n(t)-\partial_t \eta^n(\tau)||_{L^2(\Gamma)}^2  &\leq& 2\xi_k C_\Gamma  ||\eta^n(t)-\eta^n(\tau)||_{L^2(\Gamma)}^2 \Big(  \xi_k +C_\Gamma C_R^2  \sum_{i=1}^k \xi_i^{a/2}  \Big) \nonumber \\
&\leq& 2\xi_k C_\Gamma  ||\int_{\tau}^{t} \partial_t \eta^{n}||_{L^2(\Gamma)}^2 \Big(  \xi_k +C_\Gamma C_R^2  \sum_{i=1}^k \xi_i^{a/2}  \Big)  \nonumber\\
&\leq& 2\xi_k C_\Gamma  || \partial_t \eta^{n}||_{L^\infty(n\Delta t,(n+1)\Delta t; L^2(\Gamma_)}^2 |t-\tau|^2  \Big(  \xi_k +C_\Gamma C_R^2  \sum_{i=1}^k \xi_i^{a/2}  \Big)   \nonumber\\
&\leq& 8\xi_k C_B |t-\tau|^2 \Big(  \xi_k +C_\Gamma C_R^2  \sum_{i=1}^k \xi_i^{a/2}  \Big).      \label{so7}
\end{eqnarray}
The last inequality here can be obtained by choosing, for example, $(\Delta t)^{\alpha} \leq \frac{1}{c}(C^*+C_0)$, and then by Lemma $\ref{111}(4)$, statement $(1)$ and $\eqref{CB}$, we bound
\begin{eqnarray*}
||\partial_t \eta^n||_{L^\infty((n-1)\Delta t, n\Delta t; L^2(\Gamma))}^2 \leq 2||\partial_t \eta^{n}(t) - v^{n-1}||_{L^\infty((n-1)\Delta t, n\Delta t;L^2(\Gamma))}^2+ 2||v^{n-1}||_{L^2(\Gamma)}^2 \leq \frac{4}{c}(C^*+C_0) = \frac{4C_B}{C_\Gamma}.
\end{eqnarray*}
Next, by the  H\"{o}lder inequality we have
\begin{eqnarray}
|  \widetilde{\partial}_t \eta^{n+1} - \partial_t \eta^{n+1}(\tau)|^2 &=& \Big|  \bint_{n\Delta t}^{(n+1)\Delta t} \ddfrac{\partial_t \eta^{n+1}(t)-\partial_t \eta^{n+1}(\tau)}{\Delta t} dt \Big|^2 \nonumber \\
 &\leq& \frac{1}{\Delta t}\bint_{n\Delta t}^{(n+1)\Delta t} \Big| 
\partial_t \eta^{n+1}(t)-\partial_t \eta^{n+1}(\tau) \Big|^2 dt \label{justsomeineq}
\end{eqnarray}
Now by integrating this inequality over $\Gamma$ and using the inequality $\eqref{so7}$, we obtain
\begin{equation}\label{inq}
\ddfrac{1}{\Delta t}||\widetilde{\partial}_t \eta^n - \partial_t \eta^n(\tau)||_{L^2(\Gamma)}^2 \leq 8 \xi_k C_B \Big(  \xi_k +C_\Gamma C_R^2  \sum_{i=1}^k \xi_i^{a/2}  \Big)  \int_{n\Delta t}^{(n+1)\Delta t} |\tau - t|^2 dt
\end{equation} 
From $\eqref{key}$, we can estimate 
\begin{eqnarray*}
(\Delta t)^{\alpha-2}\geq 2 \xi_k C_B\quad \text{and} \quad \frac{1}{2C_B} (\Delta t)^{\alpha-2} \geq \Big(  \xi_k +C_\Gamma C_R^2  \sum_{i=1}^k \xi_i^{a/2}  \Big)
\end{eqnarray*}
 which gives us
\begin{eqnarray}\label{anothereq}
\frac{4}{C_B} (\Delta t)^{2\alpha-4} \geq   8\xi_k C_B \Big(  \xi_k +C_\Gamma C_R^2  \sum_{i=1}^k \xi_i^{a/2}  \Big)
\end{eqnarray}
Now, combining $\eqref{inq}$ and $\eqref{anothereq}$, we conclude:
\begin{eqnarray*}
||\widetilde{\partial}_t \eta_{\Delta t,k}^n - \partial_t \eta_{\Delta t,k}^n||_{L^\infty((n-1)\Delta t, n\Delta t; L^2(\Gamma))}^2 \leq \frac{ 4(\Delta t)^{2\alpha}}{3C_B},
\end{eqnarray*}
and since $n$ was arbitrary, the statement $(5)$ follows. 

Finally, from Lemma \ref{111}(6) and the statements $(1)$ and $(5)$, we obtain:
\begin{eqnarray*}
\sum_{n=0}^{N-1} ||\eta_{\Delta t,k}^{n+1}-v_{\Delta t,k}^{n+1}||_{L^2(\Gamma)}^2  \\
\leq \sum_{n=0}^{N-1} ||v_{\Delta t,k}^{n+1}-\widetilde{\partial}_t \eta_{\Delta t,k}^{n+1}||_{L^2(\Gamma)}^2 + \sum_{n=0}^{N-1} ||\widetilde{\partial}_t\eta_{\Delta t,k}^{n+1}-\partial_t\eta_{\Delta t,k}^{n+1}||_{L^2(\Gamma)}^2 \leq C
\end{eqnarray*}
and consequently
\begin{eqnarray*}
\sum_{n=1}^{N-1} ||v_{\Delta t,k}^{n+1}-v_{\Delta t,k}^{n}||_{L^2(\Gamma)}^2 \nonumber \\
\leq \sum_{n=1}^{N-1} ||v_{\Delta t,k}^{n+1}-\partial_t \eta_{\Delta t,k}^{n+1}||_{L^2(\Gamma)}^2 +\sum_{n=1}^{N-1} ||\partial_t\eta_{\Delta t,k}^{n+1}-v_{\Delta t,k}^{n}||_{L^2(\Gamma)}^2 \leq C,
\end{eqnarray*}
so the proof is complete.

\end{proof}

\section{Convergence of the approximate solutions}
In this section, by using the estimates from section 3, we will first obtain the weak convergence of the approximate solutions. After that, by using the Theorem \cite[Theorem 3.1]{newcompact} and some standard compactness results, we will deduce the strong convergence as well.
\subsection{Weak convergence}
In order to prove the weak convergence of approximate solutions, we first need to prove that LE mapping defined in section 2.3 is well-defined independently from the choice of $\Delta t$ and $k$ on the
whole time interval $[0,T]$:

\begin{lem}\label{112}
Assuming that $N(k)$ satisfies $\eqref{key}$, we can choose  $T>0$ independent of $\Delta t$ and $k$ such that for all $t \in [0,T]$ 
\begin{eqnarray*}
0<C_{min}\leq 1+\eta_{\Delta t,k}(t,X)\leq C_{max}
\end{eqnarray*}

\end{lem}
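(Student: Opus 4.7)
The plan is to combine the two-dimensional Sobolev embedding $H^2(\Gamma) \hookrightarrow C(\overline{\Gamma})$ with the compatibility hypothesis $\eta_0 > -1$ and the uniform a priori bounds from Lemmas \ref{111} and \ref{lemmakey}. Since $\eta_0 \in H_0^2(\Gamma) \hookrightarrow C(\overline{\Gamma})$ and $\overline{\Gamma}$ is compact, there exists $\delta > 0$ such that $1 + \eta_0(X) \ge 2\delta$ on $\overline{\Gamma}$. The Galerkin initial datum $\eta_{0,k} = \sum_{i=1}^{k}(\eta_0, w_i)w_i$ converges to $\eta_0$ in $H_0^2(\Gamma)$, hence uniformly on $\overline{\Gamma}$; thus for all sufficiently large $k$ (which we may always assume since we pass $k \to \infty$) we have $1 + \eta_{0,k}(X) \ge \delta$ on $\overline{\Gamma}$.

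The main step is to bound $\|\eta_{\Delta t,k}(t) - \eta_{0,k}\|_{C(\overline{\Gamma})}$ by a positive power of $t$, with constant independent of $\Delta t$ and $k$. By Lemma \ref{sspsolution}, $\eta_{\Delta t,k}$ is continuous in $t$ with values in $H_0^2(\Gamma)$ and piecewise $C^1$ with values in $L_k^2(\Gamma)$, so in $L^2(\Gamma)$
\begin{equation*}
\eta_{\Delta t,k}(t) - \eta_{0,k} = \int_0^t \partial_t \eta_{\Delta t,k}(s)\,ds.
\end{equation*}
Lemma \ref{lemmakey}(2) gives a constant $M$, uniform in $\Delta t, k$, with
\begin{equation*}
\|\eta_{\Delta t,k}(t) - \eta_{0,k}\|_{L^2(\Gamma)} \le \int_0^t \|\partial_t \eta_{\Delta t,k}(s)\|_{L^2(\Gamma)}\,ds \le M t.
\end{equation*}
Combined with the uniform $H^2$ bound $\|\eta_{\Delta t,k}(t)\|_{H^2(\Gamma)} \le K$ (Lemma \ref{111}(2)) and the interpolation
\begin{equation*}
\|f\|_{H^{2-\epsilon}(\Gamma)} \le C\|f\|_{L^2(\Gamma)}^{\epsilon/2}\|f\|_{H^2(\Gamma)}^{1-\epsilon/2},
\end{equation*}
with $\epsilon \in (0,1)$ chosen so that $H^{2-\epsilon}(\Gamma) \hookrightarrow C(\overline{\Gamma})$ (which is automatic in two space dimensions), this yields
\begin{equation*}
\|\eta_{\Delta t,k}(t) - \eta_{0,k}\|_{C(\overline{\Gamma})} \le C_* t^{\epsilon/2},
\end{equation*}
with $C_*$ depending only on $K$, $M$ and the embedding constant, hence independent of $\Delta t$ and $k$.

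Choosing now $T > 0$ such that $C_* T^{\epsilon/2} \le \delta/2$, the lower bound follows for every $(t,X) \in [0,T] \times \overline{\Gamma}$:
\begin{equation*}
1 + \eta_{\Delta t,k}(t,X) \ge \bigl(1 + \eta_{0,k}(X)\bigr) - \tfrac{\delta}{2} \ge \tfrac{\delta}{2} =: C_{min}.
\end{equation*}
The upper bound $C_{max} := 1 + C_{\mathrm{emb}} K$ is immediate from $\|\eta_{\Delta t,k}(t)\|_{C(\overline{\Gamma})} \le C_{\mathrm{emb}}\|\eta_{\Delta t,k}(t)\|_{H^2(\Gamma)} \le C_{\mathrm{emb}} K$. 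The main obstacle is ensuring $T$ depends neither on $\Delta t$ nor on $k$; this is possible precisely because both the $L^\infty_t H^2_x$ bound on $\eta_{\Delta t,k}$ and the $L^\infty_t L^2_x$ bound on $\partial_t \eta_{\Delta t,k}$ from Lemma \ref{lemmakey}(2) are uniform in $\Delta t, k$, which itself crucially relied on the sub-step condition \eqref{key}.
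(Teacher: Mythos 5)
Your proof is correct and follows essentially the same route as the paper: a time-integral bound on $\eta_{\Delta t,k}(t)-\eta_{\Delta t,k}(0)$ in $L^2(\Gamma)$, interpolation against the uniform $H^2_0(\Gamma)$ bound, Sobolev embedding into $C(\overline{\Gamma})$, and then choosing $T$ small. You are somewhat more careful than the paper in distinguishing $\eta_{0,k}$ from $\eta_0$ (invoking $\eta_{0,k}\to\eta_0$ in $H_0^2(\Gamma)$ before applying the lower bound), which the paper's proof glosses over, but this is a minor tidying rather than a different argument.
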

\begin{proof}
Using the uniform estimate of $\partial_t \eta_{\Delta t,k}$ given in Lemma \ref{111}(3), we first obtain:
\begin{eqnarray*}
||\eta_{\Delta t,k}(t)-\eta_{\Delta t,k}(0)||_{L^2(\Gamma)} \leq
||\int_{0}^{t} \partial_t \eta_{\Delta t,k}(h) dh||_{L^2(\Gamma)}
\leq CT,
\end{eqnarray*}
and from the uniform estimate of $\eta_{\Delta t,k}$ in
$L^\infty(0,T;H_0^2(\Gamma))$ given in Lemma \ref{111}(2), we have:
\begin{eqnarray*}
||\eta_{\Delta t,k}(t)-\eta_{\Delta t,k}(0)||_{H_0^2(\Gamma)}
\leq 2C.
\end{eqnarray*}
By the interpolation inequality for Sobolev spaces we have:
\begin{eqnarray*}
||\eta_{\Delta t,k}(t)-\eta_{\Delta t,k}(0)||_{H^{3/2}(\Gamma)}
\leq 2CT^{1/4}.
\end{eqnarray*}
Since $H^{3/2}(\Gamma)$ is imbedded into
$C(\overline{\Gamma})$, we can bound
\begin{eqnarray*}
\eta_0+1 + 2CT^{1/4} \geq \eta_{\Delta t,k}(t)+1 \geq \eta_0+1- 2CT^{1/4}.
\end{eqnarray*}
Since $\eta_0+1 \geq C >0$, we can take $T$ small enough such that 
\begin{eqnarray*}
\eta_{\Delta t,k}+1 \geq C-2CT^{1/4} \geq C_{min} >0,
\end{eqnarray*}
and now the upper bound $C_{max}$ follows easily. This finishes the proof.
\end{proof}

Notice that now on the time interval $[0,T]$ the Jacobian of the LE mapping $A_{\eta}(t)$ is uniformly bounded from above and below by two positive constants. 

The following result is a direct consequence of boundedness given in Lemma $\ref{111}$:

\begin{lem}\label{weakconv}
Assuming that $N(k)$ satisfies $\eqref{key}$, there exist subsequences of $(\mathbf{u}_{\Delta t,k})_{\Delta t,k},(v_{\Delta
t,k})_{\Delta t,k}$ and $(\eta_{\Delta t,k})_{\Delta t,k}$, and the
functions $v \in L^\infty(0,T; L^2(\Omega))$, $\eta \in
W^{1, \infty}(0,T; L^2(\Gamma))\cap L^\infty(0,T;
H_0^2(\Gamma))$ and $\mathbf{u} \in L^\infty(0,T; L^2(\Omega))\cap
L^2(0,T; H^1(\Omega))$ such that:
\begin{eqnarray*}
\eta_{\Delta t,k}& \rightharpoonup &\eta \text{ weakly* in
}L^\infty(0,T; H_0^2(\Gamma)), \\
\partial_t \eta_{\Delta t,k} &\rightharpoonup& \partial_t \eta
\text{ weakly* in }L^\infty(0,T; L^2(\Gamma)) \\
v_{\Delta t,k} &\rightharpoonup &v \text{ weakly* in }
L^\infty(0,T; L^2(\Gamma)), \\
\mathbf{u}_{\Delta t,k} &\rightharpoonup &\mathbf{u} \text{ weakly* in
}L^\infty(0,T; L^2(\Omega)), \\
\nabla^{\tilde{\eta}^{n+1}}\mathbf{u}_{\Delta t,k} &\rightharpoonup & M \text{ weakly in }L^2(0,T;
L^2(\Omega)), 
\end{eqnarray*}
as $k \to +\infty$.
\end{lem}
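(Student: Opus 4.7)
The plan is to apply standard weak compactness arguments to each of the bounded sequences, using the uniform estimates already established in Lemmas \ref{111} and \ref{lemmakey}, and then perform a diagonal extraction to obtain a common subsequence.

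First, for each quantity we invoke an appropriate compactness principle. The space $L^\infty(0,T; H_0^2(\Gamma))$ is the dual of the separable Banach space $L^1(0,T; H^{-2}(\Gamma))$, so by Banach--Alaoglu the sequence $(\eta_{\Delta t,k})$, which is bounded by Lemma \ref{111}(2), admits a weakly-* convergent subsequence with some limit $\eta$. The same reasoning applied to $(\partial_t \eta_{\Delta t,k})$ in $L^\infty(0,T; L^2(\Gamma))$ (Lemma \ref{lemmakey}(2)), to $(v_{\Delta t,k})$ in $L^\infty(0,T; L^2(\Gamma))$ (Lemma \ref{111}(4)), and to $(\mathbf{u}_{\Delta t,k})$ in $L^\infty(0,T; L^2(\Omega))$ (Lemma \ref{111}(5)) produces weakly-* convergent subsequences. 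For $\nabla^{\tilde{\eta}_{\Delta t,k}^{n+1}} \mathbf{u}_{\Delta t,k}$, Lemma \ref{lemmakey}(3) supplies a uniform bound in the reflexive Hilbert space $L^2(0,T; L^2(\Omega))$, so reflexivity yields a weakly convergent subsequence with some limit $M$.

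After a diagonal extraction the same (relabeled) subsequence witnesses all six convergences. It remains to identify the weak-* limit of $\partial_t \eta_{\Delta t,k}$ with the distributional time derivative of $\eta$. For any $\varphi \in C_c^\infty((0,T) \times \Gamma)$, integration by parts (legitimate on each sub-interval $[n\Delta t,(n+1)\Delta t]$ by Lemma \ref{sspsolution}, followed by summation in $n$ with the telescoping boundary terms vanishing because $\varphi$ has compact support in time) gives
$$\int_0^T \int_\Gamma \partial_t \eta_{\Delta t,k} \, \varphi \, dX \, dt = -\int_0^T \int_\Gamma \eta_{\Delta t,k} \, \partial_t \varphi \, dX \, dt.$$
Since $\varphi \in L^1(0,T; L^2(\Gamma))$ and $\partial_t \varphi \in L^1(0,T; H^{-2}(\Gamma))$, the two weak-* convergences above let us pass to the limit in both integrals, identifying the weak-* limit of $\partial_t \eta_{\Delta t,k}$ with $\partial_t \eta$ in $\mathcal{D}'((0,T)\times\Gamma)$. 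The closedness of $L^\infty(0,T; L^2(\Gamma))$ under weak-* limits then gives $\eta \in W^{1,\infty}(0,T; L^2(\Gamma)) \cap L^\infty(0,T; H_0^2(\Gamma))$.

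I do not expect serious obstacles. The only mild subtlety is ensuring that $\partial_t \eta_{\Delta t,k}$ is a well-defined element of $L^\infty(0,T; L^2(\Gamma))$ despite the construction being piece-wise in $n$; this is immediate from Lemma \ref{sspsolution} (each $\eta_{\Delta t,k}^{n+1}$ is $C^1$ in time into $L_k^2(\Gamma)$) together with the uniform bound of Lemma \ref{lemmakey}(2). I emphasise that at this stage we do \emph{not} claim $v = \partial_t \eta$ (the trace sense) nor $M = \nabla^\eta \mathbf{u}$: these identifications require the strong convergence results developed in the next subsection, together with the smallness estimate of Lemma \ref{lemmakey}(1) for the kinematic coupling and the uniform boundedness of the Jacobian from Lemma \ref{112} for the transformed gradient.
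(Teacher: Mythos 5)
Your proof is correct and takes essentially the same approach as the paper, which offers no explicit argument and simply states the lemma follows directly from the established uniform bounds. You have filled in the standard Banach--Alaoglu/reflexivity compactness details, correctly sourced the $L^\infty$ bound on $\partial_t\eta_{\Delta t,k}$ and the $L^2$ bound on the transformed gradient from Lemma \ref{lemmakey} rather than Lemma \ref{111}, and correctly deferred the identifications $v=\partial_t\eta$ and $M=\nabla^\eta\mathbf{u}$ to the strong-convergence subsection.
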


We will later prove that $M$ is equal to $\nabla^\eta \bb{u}$. 

\subsection{Strong convergence}
In this section, we will prove the strong convergence of the approximate functions, which is essential in passing the limit in the nonlinear terms of the approximate problem. The key part is to prove the strong convergence of $\bb{u}_{\Delta t,k}$ and $v_{\Delta t,k}$ in $L^2(0,T; L^2(\Omega))$ and $L^2(0,T; L^2(\Gamma))$, respectively, which will be done by using the recent abstract result \cite[Theorem 3.1]{newcompact}. Troughout this section, we will assume that $T/\dt = N(k)$ satisfies the condition $\eqref{key}$.

\subsubsection{The strong convergence of $\bb{u}_{\Delta t,k}$ and $v_{\Delta t,k}$}
We shall apply \cite[Theorem 3.1]{newcompact} to study the sequence $(\hat{\bb{u}}_{\Delta t, k}, v_{\Delta t,k})_{k}$ (recall that $\Delta t = \Delta t(k)$ and that $\hat{\bb{u}}_{\Delta t, k}^{n} = \bb{u}_{\Delta t, k}^n \circ A_{\Delta t,k}^{n-1}$). First, we start by introducing the following spaces on the moving domain, the following test function space
\begin{eqnarray*}
Q_{\Delta t,k}^n = \{(\hat{\bb{q}},\psi) \in H^4(\Omega^{\eta_{\Delta t,k}^{n}}) \times H_{0,k}^2(\Gamma): \nabla \cdot \hat{\bb{q}} =0,~ \hat{\bb{q}}(t,X, \eta_{\Delta t,k}^n(X)) = \psi(t,X) \bb{e}_3 \}
\end{eqnarray*}
which is dense in the space of functions $(\bb{q} \circ (A_{\Delta t,k}^n)^{-1},\psi)$ such that $(\bb{q},\psi) \in \mathcal{W}_k^{\tilde{\eta}_{\Delta t,k}^{n}}$, and the following solution space
\begin{eqnarray*}
V_{\Delta t,k}^{n} =  \{(\hat{\bb{u}},v) \in H^{1}(\Omega^{\tilde{\eta}_{\Delta t,k}^{n}}) \times H_k^{\theta}(\Gamma): \nabla \cdot \hat{\bb{u}} =0,~ \hat{\bb{u}}(t,X, \widetilde{\eta}_{\Delta t,k}^{n}(X)) = v(t,X) \bb{e}_3 \}
\end{eqnarray*}
where $0<\theta<1/2$ is a parameter (which will be fixed from here onwards) and $H_k^{\theta}(\Gamma) = (\text{span}(\{w_i\}_{ 1\leq
i \leq k}), || \cdot||_{H^\theta(\Gamma)})$. Notice that if $(\bb{u}_{\Delta t,k}^{n+1}, v_{\Delta t,k}^{n+1}) \in \mathcal{W}_k^{\tilde{\eta}_{\Delta t,k}^n}$ then $(\hat{\bb{u}}_{\Delta t,k}^{n+1}, v_{\Delta t,k}^{n+1})\in V_{\Delta t,k}^{n}$ (due to the trace regularity, see Lemma $\ref{lemmakey}(4)$), so this choice of space is suitable for the solution on the moving domain.

We want to introduce function spaces which are defined on a maximal bounded domain containing all fluid domains $\Omega^{\eta_{\Delta t,k}^i}$ for $0\leq i \leq N-1$. For this reason, we define a smooth scalar function $M(X)$ on $\Gamma$ such that 
\begin{eqnarray}\label{functionM}
\begin{rcases}
&M(X)\geq \eta_{\Delta t,k}^n(X),\quad \text{for all } X\in \Gamma, k \in \mathbb{N}, T/\Delta t(k)=N \in \mathbb{N}~ \text{and } 0\leq n\leq N-1, \\
&|M(X) - \eta_0(X)|_{\infty}\leq C(T), \quad \text{where } C(T) \to 0, \text{when } T \to 0 
\end{rcases}
\end{eqnarray}
The existence of such a function follows by the fact that $\eta_{\Delta t,k}$ is uniformly bounded in $W^{1,\infty}(0,T; L^2(\Gamma))\cap L^{\infty}(0,T; H_0^2(\Gamma))$ (from Lemmas $\ref{111}$ and $\ref{lemmakey}$) which is imbedded into $C^{0,\alpha}(0,T; C^{0,1-2\alpha}(\Gamma))$, for any $0<\alpha<1/2$. Now we define the maximal domain 
\begin{eqnarray*}
\Omega^M = \{(X,z): X\in \Gamma,-1 < z < M(X)\}
\end{eqnarray*}
and the following Hilbert spaces:
\begin{eqnarray*}
&H = L^2(\Omega^M) \times L^2(\Gamma)& \\
&V = H^{\theta}(\Omega^M) \times H^{\theta}(\Gamma)&
\end{eqnarray*}
We want to prove the precompactness of $\{ (\hat{\bb{u}}_{\Delta t,k}, v_{\Delta t,k})\}_k$ in $L^2(0,T ;H)$, so we need the functions $\hat{\bb{u}}_{\Delta t, k}$ to be defined on $\Omega^M$. For that reason, we extend the functions $\hat{\bb{u}}_{\Delta t, k}$ by zero to $\Omega^M$, which we still denote as $\hat{\bb{u}}_{\Delta t, k}$ without any confusion. In Appendix B, we prove that these extended functions are indeed in the space $H^\theta(\Omega^M)$, and that this extension maps $(V_{\Delta t,k}^n, Q_{\Delta t,k}^n) \hookrightarrow V\times V$ continuously (uniformly in $n,\Delta t,k$).

In order to obtain this strong convergence, we will verify that the hypotheses in \cite[Theorem 3.1]{newcompact} hold:

\begin{lem}\label{assumab}
Let $(\bb{u}_{\Delta t,k}^{n}, v_{\Delta t,k}^n) \in \mathcal{W}_k^{\tilde{\eta}_{\Delta t,k}^{n-1}}$ be the solution of the (FSP) obtained in the Section 3 and $(\hat{\bb{u}}_{\Delta t,k}^n, v_{\Delta t,k}^n) \in V_{\Delta t,k}^{n-1} $ the corresponding solution on the moving domain. Then:
\begin{enumerate}
\item[(1)] $\sum_{n=1}^{N}||\hat{\bb{u}}_{\Delta t,k}^n ||_{H^1(\Omega^{\tilde{\eta}_{\Delta t,k}^{n-1}})}^2\Delta t \leq C$ and $\sum_{n=1}^{N}||\bb{v}_{\Delta t,k}^n ||_{H^\theta(\Gamma)}^2\Delta t \leq C$;
\item[(2)] $||\hat{\bb{u}}_{\Delta t,k}||_{L^\infty(0,T; L^2(\Omega^M))} \leq C$ and $||v_{\Delta t,k}||_{L^\infty(0,T; L^2(\Gamma))} \leq C$
\end{enumerate}
Moreover, we have
\begin{eqnarray}\label{assumb}
||P_{\Delta t,k}^n \frac{\hat{\bb{u}}_{\Delta t,k}^{n+1} - \hat{\bb{u}}_{\Delta t,k}^{n}}{\Delta t}||_{(Q_{\Delta t,k}^n)'} \leq C(|| \hat{\bb{u}}_{\Delta t,k}^{n+1}||_{V_{\Delta t,k}^n}+1)
\end{eqnarray}
where $P_{\Delta t,k}^n$ is the orthogonal projection onto $\overline{Q_{\Delta t,k}^n}^H$ and $(Q_{\Delta t,k}^n)'$ is the dual space of $Q_{\Delta t,k}^n$.
\end{lem}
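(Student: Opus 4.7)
My plan is to handle the three assertions in turn, drawing mainly on the uniform estimates collected in Lemmas \ref{111} and \ref{lemmakey}, the bi-Lipschitz bound on the Jacobian from Lemma \ref{112}, and the weak form \eqref{FSP}. Parts (1) and (2) reduce to bookkeeping with these lemmas. For the first inequality in (1), I pull the dissipation $D_{\Delta t,k}^n=\Delta t\,\mu\int_\Omega J^{n-1}\mathbf{D}(\bb{u}^n):\mathbf{D}(\bb{u}^n)$ back to $\Omega^{\tilde{\eta}^{n-1}}$ via $A^{n-1}$; the transformed Korn inequality then applies with a constant uniform in $n,\Delta t,k$ because $\eta_{\Delta t,k}$ is uniformly bounded in $W^{1,\infty}(0,T;L^2(\Gamma))\cap L^\infty(0,T;H_0^2(\Gamma))\hookrightarrow C^{0,\alpha}$ by Lemmas \ref{111}(2) and \ref{lemmakey}(2), and $J^{n-1}$ is pinned away from zero by Lemma \ref{112}. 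The $L^2$ lower-order contribution is absorbed using Lemma \ref{111}(5), and summation against Lemma \ref{111}(6) closes the bound. The second inequality in (1) is exactly Lemma \ref{lemmakey}(4) specialised to $s=\theta<\tfrac{1}{2}$. Part (2) follows from the change-of-variables identity $\|\hat{\bb{u}}^n\|_{L^2(\Omega^{\tilde{\eta}^{n-1}})}^2=\int_\Omega J^{n-1}|\bb{u}^n|^2$ combined with Lemma \ref{112} and Lemmas \ref{111}(4,5); extension by zero to $\Omega^M$ is isometric in $L^2$.

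The dual estimate \eqref{assumb} is the main technical step. For an arbitrary $(\hat{\bb{q}},\psi)\in Q_{\Delta t,k}^n$, orthogonality of the projection yields
\[
\Big\langle P_{\Delta t,k}^n\frac{\hat{\bb{u}}^{n+1}-\hat{\bb{u}}^n}{\Delta t},(\hat{\bb{q}},\psi)\Big\rangle_{(Q_{\Delta t,k}^n)',Q_{\Delta t,k}^n}=\frac{1}{\Delta t}\int_{\Omega^M}(\hat{\bb{u}}^{n+1}-\hat{\bb{u}}^n)\cdot\hat{\bb{q}}+\frac{1}{\Delta t}\int_\Gamma(v^{n+1}-v^n)\psi.
\]
I introduce the pullback $\bb{q}:=\hat{\bb{q}}\circ A^n\in V_F^{\tilde{\eta}^n}$; the trace condition built into $Q_{\Delta t,k}^n$ makes $(\bb{q},\psi)\in\mathcal{W}_k^{\tilde{\eta}^n}$ admissible in \eqref{FSP}. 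Using \eqref{FSP} to replace the time-difference on $\Omega$ by the convective, viscous, and Jacobian-time-difference terms, a direct change of variables identifies $\int_\Omega J^n\bb{u}^{n+1}\cdot\bb{q}=\int_{\Omega^M}\hat{\bb{u}}^{n+1}\cdot\hat{\bb{q}}$, while $\int_\Omega J^n\bb{u}^n\cdot\bb{q}$ differs from $\int_{\Omega^M}\hat{\bb{u}}^n\cdot\hat{\bb{q}}$ by a geometric discrepancy caused by the mismatch between $A^{n-1}$ and $A^n$. Writing $\bb{u}^n\circ(A^n)^{-1}=\hat{\bb{u}}^n\circ A^{n-1}\circ(A^n)^{-1}$ and using the smallness of $\tilde{\eta}^n-\tilde{\eta}^{n-1}$ (via Lemma \ref{lemmakey}(2) interpolated against the uniform $L^\infty H^2$ bound), this discrepancy is absorbed into the right-hand side after division by $\Delta t$.

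The plate boundary piece is handled by splitting $v^{n+1}-v^n=(v^{n+1}-\widetilde{\partial}_t\eta^{n+1})+(\widetilde{\partial}_t\eta^{n+1}-\partial_t\eta^{n+1})+(\partial_t\eta^{n+1}-v^n)$: the first piece is absorbed into \eqref{FSP}, while Lemma \ref{lemmakey}(1,5) controls the remaining two pointwise in $n$. The viscous term in \eqref{FSP} is estimated by Cauchy--Schwarz as $C\|\hat{\bb{u}}^{n+1}\|_{V_{\Delta t,k}^n}\|\hat{\bb{q}}\|_{H^1}$; the Jacobian-time-difference term by $\|(J^{n+1}-J^n)/\Delta t\|_{L^\infty}\le C$ coming from Lemma \ref{lemmakey}(2) against the uniform $L^2$ bound of $\bb{u}^{n+1}$; and the antisymmetric convective term by H\"older in $L^2$--$L^2$--$L^\infty$ using the uniform $L^\infty L^2$ control of $\bb{u}^n$, the $L^\infty$ control of $\bb{w}^{n+1}=\widetilde{\partial}_t\eta^{n+1}\bb{e}_3$ granted by Lemma \ref{lemmakey}(2), and the embedding $Q_{\Delta t,k}^n\hookrightarrow L^\infty$ with uniform dependence on the $V$-norm ensured by Appendix B. Collecting terms and taking the supremum over $(\hat{\bb{q}},\psi)$ in the $V$-unit ball of $Q_{\Delta t,k}^n$ yields \eqref{assumb}.

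The principal obstacle is the geometric correction in the second paragraph: the difference between $\bb{u}^n\circ(A^n)^{-1}$ and $\hat{\bb{u}}^n=\bb{u}^n\circ(A^{n-1})^{-1}$ must be controlled in a norm sharp enough that, after division by $\Delta t$, the contribution remains bounded uniformly in $n,\Delta t,k$. Once this is arranged carefully using the H\"older regularity of $\eta_{\Delta t,k}$ in time together with the uniform $H^1$ control of $\hat{\bb{u}}^n$ from part (1), all remaining estimates follow directly from Lemmas \ref{111}, \ref{lemmakey} and \ref{112}.
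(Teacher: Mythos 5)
Your overall architecture matches the paper's: parts (1) and (2) are bookkeeping from Lemmas \ref{111} and \ref{lemmakey}; for \eqref{assumb} you sum (SSP) with (FSP), map to $\Omega^{\tilde\eta^n}$, split the time-difference into the piece expressible via the equation and a geometric discrepancy caused by the mismatch between $A^{n-1}$ and $A^n$, and treat that discrepancy separately. This is exactly the paper's route, and your identification of the geometric correction as the principal obstacle is correct.

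However, your H\"older pairings contain a concrete error that recurs in two places. You assert $\|(J^{n+1}-J^n)/\Delta t\|_{L^\infty}\le C$ and ``$L^\infty$ control of $\bb{w}^{n+1}=\widetilde{\partial}_t\eta^{n+1}\bb{e}_3$'' from Lemma \ref{lemmakey}(2). That lemma gives $\partial_t\eta_{\Delta t,k}$ bounded in $L^\infty(0,T;L^2(\Gamma))$ --- i.e.\ $L^\infty$ in time but only $L^2(\Gamma)$ in space; $\widetilde{\partial}_t\eta^{n+1}$ lies in $\mathcal{G}_k$ and so is in $L^\infty(\Gamma)$ for each fixed $k$, but with a $k$-dependent constant, which is useless here. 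The correct pairing, and the reason the test space $Q_{\Delta t,k}^n$ is built from $H^4$, is to place $\widetilde{\partial}_t\eta^{n+1}$ in $L^2(\Gamma)$, the velocity factor in $L^2$, and the test function $\hat{\bb{q}}$ in $L^\infty$ (resp.\ $\nabla\hat{\bb{q}}$ in $L^\infty$), using $H^4(\Omega^{\tilde\eta^n})\hookrightarrow W^{1,\infty}(\Omega^{\tilde\eta^n})$; this is what the paper does for both the convective and the Jacobian-difference terms.

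The second gap is in the geometric discrepancy itself. You propose controlling $\tilde\eta^n-\tilde\eta^{n-1}$ by interpolating Lemma \ref{lemmakey}(2) against the $L^\infty H^2$ bound, but any such interpolated H\"older-in-time estimate yields $\|\tilde\eta^n-\tilde\eta^{n-1}\|_X\lesssim(\Delta t)^\alpha$ with $\alpha<1$ in a spatial norm $X$ strictly stronger than $L^2$, and dividing by $\Delta t$ then loses $(\Delta t)^{\alpha-1}\to\infty$. What actually closes the estimate is the exact algebraic identity $\tilde\eta^n-\tilde\eta^{n-1}=\widetilde{\partial}_t\eta^n\,\Delta t$, which cancels the $1/\Delta t$ outright; the paper then decomposes $\Omega^{\tilde\eta^n}=S_1\cup S_2$, applies the mean-value theorem on $S_1$ to get a bound of the form $C\|\nabla\hat{\bb{u}}^n\|_{L^2(S_1)}\|\widetilde{\partial}_t\eta^n\|_{L^2(\Gamma)}\|\hat{\bb{q}}\|_{L^\infty(S_1)}$, and uses $\hat{\bb{u}}^n=0$ on $S_2$ for the difference set. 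No interpolation, and again the $L^\infty$ factor is borne by the $H^4$ test function rather than by $\widetilde{\partial}_t\eta^n$. Your plan works once these two pairings are corrected.
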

\begin{proof}
Without any confusion, in this proof we will omit $\Delta t,k$ in the subscript of functions. First, the statement $(1)$ follows from Lemmas $\ref{111}(6)$ and $\ref{lemmakey}(4)$, while the statement $(2)$ follows from Lemma $\ref{111}(4)$ and $(5)$.

Next, to prove $\eqref{assumb}$, we want to estimate the following norm 
\begin{eqnarray*}
||P_{\Delta t,k}^n \frac{\hat{\bb{u}}^{n+1} - \hat{\bb{u}}^{n}}{\Delta t}||_{(Q_{\Delta t,k}^n)'} = \sup\limits_{||(\hat{\bb{q}},\psi)||_{Q_{\Delta t,k}^n}=1} \Big| \bint_{\Omega^{\eta^n}} \frac{\hat{\bb{u}}^{n+1} - \hat{\bb{u}}^{n}}{\Delta t} \cdot \hat{\bb{q}} + \bint_\Gamma \frac{v^{n+1} - v^{n}}{\Delta t} \cdot \psi  \Big|
\end{eqnarray*}
so we sum the sub-problem $\eqref{SSP}$ integrated over $(n\Delta t, (n+1)\Delta t)$ and divided by $\Delta t$ with the sub-problem $\eqref{FSP}$, and map it back to the moving domain $\Omega^{\tilde{\eta}^{n}}$ by using the mapping $(A^n)^{-1}$, to obtain
\begin{align}
&\ddfrac{1}{2} \bint_{\Omega^{\tilde{\eta}^{n}}}\big( ((\overline{\bb{u}}^{n} - \bb{w}^{n+1})\cdot
\nabla)\hat{\bb{u}}^{n+1} \cdot \hat{\bb{q}} - ((\overline{\bb{u}}^n-\bb{w}^{n+1} ) \cdot
\nabla) \hat{\bb{q}} \cdot \hat{\bb{u}}^{n+1} \big) &\nonumber \\
&+\bint_{\Omega^{\tilde{\eta}^{n}}}\ddfrac{\hat{\bb{u}}^{n+1}-\overline{\bb{u}}^n}{\Delta t} \cdot \hat{\bb{q}} +
\frac{1}{2}\bint_{\Omega^{\tilde{\eta}^{n}}} \ddfrac{J^{n+1}-J^n}{J^n \Delta t} \hat{\bb{u}}^{n+1}
\cdot \hat{\bb{q}}&  \nonumber  \\
&+2 \mu \int_{\Omega^{\tilde{\eta}^{n}}} \bb{D} (\hat{\bb{u}}^{n+1}):\bb{D}(\hat{\bb{q}}) + \bint_\Gamma
\ddfrac{v^{n+1}-v^n}{\Delta t} \psi \nonumber \\
&+\frac{1}{\Delta t}\int_{n\Delta t}^{(n+1)\Delta t}\int_\Gamma \Delta \eta^{n+1} \Delta \psi +
\frac{1}{\Delta t}\int_{n\Delta t}^{(n+1)\Delta t}\int_\Gamma \mathcal{F}(\eta^{n+1}(t)) \psi = 0, ~~  \label{summedupmapped}
\end{align}
for all $(\hat{\bb{q}},\psi) \in Q_{\Delta t,k}^n$, where $\overline{\bb{u}}^n = \bb{u}^n \circ (A_{\Delta t,k}^n)^{-1}$. 
Notice that in one term $1/J^n$ appeared (and in others got canceled with $J^n$) since its the Jacobian of the mapping $(A^n)^{-1}$. We kept the notation $J^n,J^{n+1}$ and $\bb{w}^{n+1}$ even though these functions are defined on $\Omega$ since these functions do not depend on the coordinate $z$, so there is no confusion. First, by the triangle inequality
\begin{eqnarray*}
 \Big| \bint_{\Omega^{\tilde{\eta}^{n}}} \frac{\hat{\bb{u}}^{n+1} - \hat{\bb{u}}^{n}}{\Delta t} \cdot \hat{\bb{q}} + \bint_\Gamma \frac{v^{n+1} - v^{n}}{\Delta t} \cdot \psi\Big| \\   \leq  \Big| \bint_{\Omega^{\tilde{\eta}^{n}}} \frac{\hat{\bb{u}}^{n+1} - \overline{\bb{u}}^{n}}{\Delta t} \cdot \hat{\bb{q}} + \bint_\Gamma \frac{v^{n+1} - v^{n}}{\Delta t} \cdot \psi  \Big| 
+ \Big| \bint_{\Omega^{\tilde{\eta}^{n}}} \frac{\hat{\bb{u}}^{n} - \overline{\bb{u}}^{n}}{\Delta t} \cdot \hat{\bb{q}}  \Big|
\end{eqnarray*}
The first term on the right-hand side is estimated from the equation $\eqref{summedupmapped}$
in the following way:
\begin{eqnarray*}
 &&\Big| \bint_{\Omega^{\eta_{\Delta t,k}^n}} \frac{\hat{\bb{u}}^{n+1} - \overline{\bb{u}}^{n}}{\Delta t} \cdot \hat{\bb{q}} + \bint_\Gamma \frac{v^{n+1} - v^{n}}{\Delta t} \cdot \psi  \Big| \\ &\leq& \big( ||\overline{\bb{u}}^n||_{L^2(\Omega^{\tilde{\eta}^{n}})}+|| \bb{w}^{n+1}||_{L^2(\Omega^{\tilde{\eta}^{n}})} \big) ||\nabla \hat{\bb{u}}^{n+1}||_{L^2(\Omega^{\tilde{\eta}^{n}})} || \hat{\bb{q}}||_{L^\infty(\Omega^{\tilde{\eta}^{n}})}\\
&+& \big( ||\overline{\bb{u}}^n||_{L^2(\Omega^{\tilde{\eta}^{n}})}+|| \bb{w}^{n+1}||_{L^2(\Omega^{\tilde{\eta}^{n}})} \big) ||\nabla  \hat{\bb{q}}||_{L^\infty(\Omega^{\tilde{\eta}^{n}})} ||\hat{\bb{u}}^{n+1}||_{L^2(\Omega^{\tilde{\eta}^{n}})} +C|| \widetilde{\partial}_t \eta^{n+1} ||_{L^2(\Gamma)}
||\hat{\bb{u}}_{\Delta t,k}^{n+1}||_{L^2(\Omega^{\tilde{\eta}^{n}})} || \hat{\bb{q}}||_{L^\infty(\Omega^{\tilde{\eta}^{n}})}\\
 &+& C || \nabla \hat{\bb{u}}_{\Delta t,k}^{n+1}||_{L^2((\Omega^{\tilde{\eta}^{n}})}|| \nabla \hat{\bb{q}}||_{L^2(\Omega^{\tilde{\eta}^{n}})} + ||\Delta \eta||_{L^\infty(n\Delta t, (n+1)\Delta t; L^2(\Gamma))} ||\Delta \psi||_{L^2(\Gamma)}\\
 &+& C( || \mathcal{F}(0)||_{H^{-2}(\Gamma)} + ||\eta||_{L^\infty(n\Delta t, (n+1)\Delta t; H^{2-\varepsilon}(\Gamma))}) ||\psi ||_{H^2(\Gamma)} \leq C(|| \hat{\bb{u}}^{n+1} ||_{V_{\Delta t,k}^n}+1) ||(\hat{\bb{q}},\psi)||_{Q_{\Delta t,k}^n}
\end{eqnarray*}
where we used the estimates from Lemmas $\ref{111}$ and $\ref{lemmakey}$, the assumption $(A1)$ for the nonlinear function $\mathcal{F}$ and the Sobolev imbeddings. To estimate the next term, we decompose $\Omega^{\tilde{\eta}^n} =S_1 \cup S_2$, where $S_1 = \Omega^{\tilde{\eta}^n} \cap \Omega^{\tilde{\eta}^{n-1}}$, $S_2 = \Omega^{\tilde{\eta}^n} \setminus \Omega^{\tilde{\eta}^{n-1}}$ and bound
\begin{eqnarray*}
\Big| \bint_{S_1} \frac{\hat{\bb{u}}^{n} - \overline{\bb{u}}^{n}}{\Delta t} \cdot \hat{\bb{q}}  \Big| = \frac{1}{\Delta t}\Big| \bint_{S_1} \hat{\bb{u}}^{n}(X,z) - \hat{\bb{u}}^{n}(X,\frac{(\eta^{n-1} +1)(z+1)}{\eta^n+1}-1) \cdot \hat{\bb{q}}  \Big| \\
\leq C ||\nabla \hat{\bb{u}}^n ||_{L^2(S_1)} || \widetilde{\eta}^{n}||_{L^2(\Gamma)} ||\hat{\bb{q}}||_{L^\infty(S_1)}
\end{eqnarray*}
by the mean-value Theorem. Next,
\begin{eqnarray*}
\Big| \bint_{S_2} \frac{\hat{\bb{u}}^{n} - \overline{\bb{u}}^{n}}{\Delta t} \cdot \hat{\bb{q}}  \Big| \leq \frac{1}{\Delta t}\bint_{\Gamma} \bint_{\tilde{\eta}^{n-1}}^{\tilde{\eta}^{n}} |\overline{\bb{u}}^{n}| |\hat{\bb{q}}|  dz dX \leq ||\hat{\bb{u}}^n ||_{L^2(S_2)}||\widetilde{\partial}_t\eta^n||_{L^2(\Gamma)}||\hat{\bb{q}}||_{L^\infty(S_2)}
\end{eqnarray*}
since $\hat{\bb{u}}_{\Delta t,k}^{n}=0$ on $S_2$. Combining the previous four inequalities, by using the imbedding of $H^4(\Omega^{\tilde{\eta}^n})$ into $W^{1,\infty}(\Omega^{\tilde{\eta}^n})$, the uniform bounds from Lemmas $\ref{111}$ and $\ref{lemmakey}$, the inequality $\eqref{assumb}$ follows.
\end{proof}

\begin{cor}\label{comcol}
Assuming that $N(k)$ satisfies $\eqref{key}$, then $\bb{u}_{\Delta t,k} \to \bb{u}$ in $L^2(0,T; L^2(\Omega))$ and $v_{\Delta t,k} \to v$ in $L^2(0,T; L^2(\Gamma))$ as $k\to \infty$.
\end{cor}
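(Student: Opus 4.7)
The plan is to apply the abstract compactness theorem \cite[Theorem 3.1]{newcompact} to the sequence $(\hat{\bb{u}}_{\Delta t,k}, v_{\Delta t,k})_k$ viewed as a sequence of piecewise constant-in-time functions with values in the Hilbert triple $V_{\Delta t,k}^n \hookrightarrow V \hookrightarrow H$ defined in Section~4.2. Lemma \ref{assumab} has been written precisely so as to verify the hypotheses of that theorem: items (1) and (2) give the required uniform boundedness in $L^2(0,T;V_{\Delta t,k}^n)$ and in $L^\infty(0,T;H)$, while \eqref{assumb} supplies the dual-space control on the discrete time difference $(\hat{\bb{u}}^{n+1}-\hat{\bb{u}}^n)/\Delta t$ by the natural norm of $V_{\Delta t,k}^n$. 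Combined with the continuous embedding of the approximation spaces into $V\subset H$ established in Appendix~B, these are exactly the ingredients of \cite[Theorem 3.1]{newcompact}.

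Invoking the theorem, up to extraction of a further subsequence we obtain a pair $(\hat{\bb{u}}, v)$ such that
\begin{equation*}
\hat{\bb{u}}_{\Delta t,k}\to \hat{\bb{u}} \ \text{in}\ L^2(0,T;L^2(\Omega^M)), \qquad
v_{\Delta t,k}\to v \ \text{in}\ L^2(0,T;L^2(\Gamma)).
\end{equation*}
The second convergence is exactly the claim for the trace. By uniqueness of weak limits (Lemma \ref{weakconv}) this $v$ coincides with the previously identified $v$, so no renaming of the limit is needed.

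It remains to transfer the first convergence, which lives on the moving max-domain $\Omega^M$, into a strong convergence of $\bb{u}_{\Delta t,k}$ on the fixed reference domain $\Omega$. The plan is to exploit the pointwise identity $\bb{u}_{\Delta t,k}(t,X,z)=\hat{\bb{u}}_{\Delta t,k}(t, A_{\Delta t,k}(t,X,z))$ and to split
\begin{equation*}
\bb{u}_{\Delta t,k}-\bb{u} \;=\; \bigl(\hat{\bb{u}}_{\Delta t,k}-\hat{\bb{u}}\bigr)\circ A_{\Delta t,k} \;+\; \hat{\bb{u}}\circ A_{\Delta t,k} - \hat{\bb{u}}\circ A_\eta.
\end{equation*}
The first summand is controlled in $L^2(0,T;L^2(\Omega))$ by $\|\hat{\bb{u}}_{\Delta t,k}-\hat{\bb{u}}\|_{L^2(0,T;L^2(\Omega^M))}$ through the change-of-variables formula, because by Lemma \ref{112} the Jacobian $J^\eta_{\Delta t,k}$ is bounded uniformly from above and below by positive constants independent of $\Delta t,k$. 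For the second summand I would first obtain the strong convergence $\eta_{\Delta t,k}\to \eta$ in $L^2(0,T;C(\overline\Gamma))$ by an Aubin--Lions argument using the uniform bounds from Lemmas \ref{111}(2)--(3) and \ref{lemmakey}(2), which gives uniform convergence of the maps $A_{\Delta t,k}\to A_\eta$; then a standard density argument combined with continuity of translations in $L^2$ yields $\hat{\bb{u}}\circ A_{\Delta t,k}\to \hat{\bb{u}}\circ A_\eta$ in $L^2(0,T;L^2(\Omega))$.

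The principal obstacle I foresee is this transfer step rather than the invocation of the compactness theorem itself: the functions $\hat{\bb{u}}_{\Delta t,k}$ live on genuinely different (moving) sub-domains of $\Omega^M$ and are extended by zero outside, so one must be careful that the extension and the composition interact correctly. Controlling $\hat{\bb{u}}\circ A_{\Delta t,k} - \hat{\bb{u}}\circ A_\eta$ requires smooth approximation of $\hat{\bb{u}}$ and relies on the uniform H\"older regularity of $\eta_{\Delta t,k}$, which is available because $L^\infty(0,T;H^2_0(\Gamma))\cap W^{1,\infty}(0,T;L^2(\Gamma))$ embeds into $C^{0,\alpha}(0,T;C^{0,1-2\alpha}(\Gamma))$ (already used in the construction of $M(X)$ via \eqref{functionM}). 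Once this is carried out, the two $L^2$ strong convergences stated in the corollary follow.
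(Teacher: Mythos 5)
Your invocation of \cite[Theorem 3.1]{newcompact} is incomplete in a way that matters. Lemma \ref{assumab} verifies only the hypotheses labelled $A$ and $B$ in that theorem (uniform bounds, dual control of the discrete time derivative); there is a further hypothesis $C$, which requires constructing connecting operators $I_{n,l,\Delta t}^i$ between the time-dependent function spaces and verifying that they satisfy a quantitative approximation property. The paper's proof devotes a whole paragraph to this: because $\eta_{\Delta t,k}$ is only uniformly bounded in $C^{0,\alpha}(0,T;C^{0,1-2\alpha}(\Gamma))$ for $\alpha<1/2$, the construction from \cite[Example 4.2]{newcompact} gives a weaker bound $C(l\Delta t)^\alpha$ rather than $C\sqrt{l\Delta t}$, and one then has to rerun \cite[Lemmas 3.1--3.2]{newcompact} and inequality $(3.23)$ with this degraded modulus. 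Asserting that Lemma \ref{assumab} plus the Appendix~B embedding ``are exactly the ingredients'' of the theorem overlooks this nontrivial verification; that is a genuine gap.

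On the transfer step from $\Omega^M$ back to $\Omega$, you take a genuinely different route. You split $\bb{u}_{\Delta t,k}-\bb{u}$ into $(\hat{\bb{u}}_{\Delta t,k}-\hat{\bb{u}})\circ A_{\Delta t,k}$ and $\hat{\bb{u}}\circ A_{\Delta t,k}-\hat{\bb{u}}\circ A_\eta$, controlling the first by change of variables with the uniform Jacobian bounds of Lemma \ref{112} and the second by smooth approximation plus continuity of translations; the paper instead shows weak convergence of $\bb{u}_{\Delta t,k}$ to $\bb{u}$ by testing against $\phi\in L^2(\Omega)$ and pulling back, then computes $\|\bb{u}_{\Delta t,k}\|^2_{L^2(0,T;L^2(\Omega))}=\int_0^T\int_{\Omega^M}\frac{1}{J_{\Delta t,k}}|\hat{\bb{u}}_{\Delta t,k}|^2\to\|\bb{u}\|^2$, and invokes weak convergence plus norm convergence $\Rightarrow$ strong convergence. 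Your decomposition can be made to work, but the paper's version is cleaner: it never needs the density/translation argument and only uses the already-established uniform convergence $J_{\Delta t,k},T_{\Delta t}J_{\Delta t,k}\to J$ in $C([0,T]\times\overline\Omega)$ from Corollary \ref{conv}. One small caution with your approach: $\hat{\bb{u}}_{\Delta t,k}$ is the composition with the lagged map $A_{\Delta t,k}^{n-1}$ on $[n\Delta t,(n+1)\Delta t)$, so your identity $\bb{u}_{\Delta t,k}=\hat{\bb{u}}_{\Delta t,k}\circ A_{\Delta t,k}$ must be understood with the time-shifted mapping, consistent with the paper's appearance of $T_{\Delta t}J_{\Delta t,k}$.
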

\begin{proof}
From Lemma $\ref{assumab}$, we have that the assumptions $A$ and $B$ given in \cite[Theorem 3.1]{newcompact} are satisfied. Notice that the assumption $A3$ given in \cite[Theorem 3.1]{newcompact} is not necessary to verify due to \cite[Theorem 3.2]{newcompact}.
 
To prove the assumption $C$ given in \cite[Theorem 3.1]{newcompact}, one can use almost the same construction as in \cite[Example 4.2]{newcompact} with the following changes. First, since $\eta_{\Delta t,k}$ is only uniformly bounded in $C^{0,\alpha}(0,T; C^{0,1-2\alpha}(\Gamma))$ for $0<\alpha<1/2$, the functions constructed in \cite[Lemma 4.5]{newcompact} would satisfy a weaker inequality - on the right-hand side of the third inequality, we would have $C(l\Delta t)^\alpha$, instead of $C\sqrt{l\Delta t}$. Consequently, the corresponding operator $I_{n,l,\Delta t}^i$ would satisfy a weaker assumption $C1$ given in \cite[Theorem 3.1]{newcompact}, where the right-hand side of the inequality $(3.3)$ is replaced by $C(l\Delta t)^\alpha$. Nevertheless, one can carry out the proof of \cite[Theorem 3.1]{newcompact} in the same way with this weaker assumption, by proving the \cite[Lemma 3.1]{newcompact} where the right-hand side of the inequality is replaced by $C(l\Delta t)^\alpha$, and the \cite[Lemma 3.2]{newcompact} where the last term on the right-hand side of the inequality is replaced by $C(\delta)C(l\Delta t)^\alpha$. Then, one can conclude that this slightly modified \cite[Theorem 3.1]{newcompact} holds from the inequality $(3.23)$ in which the last term is replaced by $C(\delta)C(l\Delta t)^\alpha$. Notice that in our case we would choose $g(h)=Ch^{\alpha/2}$.

Therefore, we apply this version of \cite[Theorem 3.1]{newcompact} for the sequence $(\hat{\bb{u}}_{\Delta t,k},v_{\Delta t,k})_k$ and obtain that $\hat{\bb{u}}_{\Delta t,k} \to \hat{\bb{u}}$ in $L^2(0,T; L^2(\Omega^M))$ and $v_{\Delta t,k} \to v$ in $L^2(0,T; L^2(\Gamma))$. It remains to to prove that $\bb{u}_{\Delta t,k} \to \bb{u}$ in $L^2(0,T; L^2(\Omega))$, where $\bb{u} = \hat{\bb{u}}\circ A_\eta^{-1}$. Let $\phi \in L^2(\Omega)$ and
\begin{eqnarray*}
\hat{\phi}(t,X,z) = \begin{cases}
\phi \circ (A_{\Delta t,k}^n)^{-1},& \quad t \in [n\Delta t, (n+1)\Delta t), (X,z)\in \Omega^{\tilde{\eta}_{\Delta t,k}^{n}} \\
0,& \text{elsewhere in }\Omega^M
\end{cases}
\end{eqnarray*}
Now, 
\begin{eqnarray*}
\lim\limits_{k\to \infty} \int_0^T\int_\Omega \bb{u}_{\Delta t,k} \phi = \lim\limits_{k\to \infty}\int_0^T\int_{\Omega^M} \frac{1}{T_{\Delta t} J_{\Delta t,k}} \hat{\bb{u}}_{\Delta t,k} \hat{\phi} =\int_0^T\int_{\Omega^M} \frac{1}{J} \hat{\bb{u}} \hat{\phi} = \int_0^T\int_\Omega \bb{u} \phi 
\end{eqnarray*}
where $T_{\Delta t} J_{\Delta t,k}(t) = J_{\Delta t,k}(t-\Delta t)$, and since $T_{\Delta t} J_{\Delta t,k},J_{\Delta t,k} \to J$ in $C([0,T] \times \overline{\Omega})$ (see Corollary $\ref{conv}$), we have that $\bb{u}_{\Delta t,k} \rightharpoonup \bb{u}$ in $L^2(0,T; L^2(\Omega))$. Also,
\begin{eqnarray*}
\lim\limits_{k\to \infty}||\bb{u}_{\Delta t,k}||_{L^2(0,T; L^2(\Omega))}^2&=&\lim\limits_{k\to \infty}\int_{0}^T \int_{\Omega} | \bb{u}_{\Delta t,k}|^2 =\lim\limits_{k\to \infty} \int_{0}^T \int_{\Omega^M} \frac{1}{J_{\Delta t,k}} |  \hat{\bb{u}}_{\Delta t,k}|^2 \\
&=& \int_{0}^T \int_{\Omega^M} \frac{1}{J} |  \hat{\bb{u}}|^2 =\int_{0}^T \int_{\Omega} | \bb{u}|^2= || \bb{u}||_{L^2(0,T; L^2(\Omega))}^2.
\end{eqnarray*}
Now, the weak convergence of $\bb{u}_{\Delta t,k}$ and the strong convergence of $||\bb{u}_{\Delta t,k}||_{L^2(0,T; L^2(\Omega))}^2$ imply that $\bb{u}_{\Delta t,k} \to \bb{u}$ in $L^2(0,T; L^2(\Omega))$, so we finish the proof.
\end{proof}

\subsubsection{Strong convergence of the approximate plate displacement}
First, since $\eta_{\Delta t,k}$ is uniformly bounded in $W^{1, \infty}(0,T; L^2(\Gamma))\cap L^\infty(0,T;H_0^2(\Gamma))$, from the continuous imbedding
\begin{eqnarray*}
W^{1, \infty}(0,T; L^2(\Gamma))\cap L^\infty(0,T; H_0^2(\Gamma))
\hookrightarrow C^{0, 1- \alpha}([0,T], H^{2\alpha}(\Gamma)), 0<
\alpha<1,
\end{eqnarray*}
we obtain uniform boundedness of $\eta_{\Delta t,k}$ in $C^{0, 1-
\alpha}([0,T], H^{2\alpha}(\Gamma))$. Since $H^{2\alpha}$ is compactly
imbedded into $H^{2\alpha-\epsilon}$ for any fixed $\epsilon >0$, and since the functions in
$C^{0,1-\alpha}(0,T; H^{2\alpha}(\Gamma))$ are uniformly
continuous in time on finite interval, by using the Arzela-Ascoli Theorem
we obtain the compactness in time as well. Therefore, we have
\begin{eqnarray*}
\eta_{\Delta t,k} \to \eta \text{ in } C([0,T]; H^{2s}(\Gamma)), 0<s<1,
\end{eqnarray*}
and
\begin{eqnarray*}
T_{\Delta t} \eta_{\Delta t,k} \to \eta \text{ in } C([0,T]; H^{2s}(\Gamma)), 0<s<1,
\end{eqnarray*}
as $\Delta t \to 0$. By the compactness we just obtained for $\eta$, since $\mathcal{F}$ is Lipschitz continuous from $H^{2-\epsilon}(\Gamma)$ to $H^{-2}(\Gamma)$ (see \eqref{assumption0}) and since constant $C_R$ is now uniformly bounded due to the uniform energy estimate, we have
\begin{eqnarray*}
||\mathcal{F}(\eta_{\Delta t,k}(t))-\mathcal{F}(\eta(t))||_{H^{-2}(\Gamma)}\leq C||\eta_{\Delta t,k}(t)-\eta(t)||_{H^{2-\epsilon}(\Gamma)},
\end{eqnarray*}
so $\mathcal{F}(\eta_{\Delta t,k}(t))\to \mathcal{F}(\eta(t))$ in $H^{-2}(\Gamma)$, for all $t \in [0,T]$. Next, from Lemma $\ref{lemmakey}(5),(6)$ and Corollary $\ref{comcol}$, when $N(k)$ satisfies $\eqref{key}$, we have that $\partial_t \eta_{\Delta t,k},\widetilde{\partial}_t \eta_{\Delta t,k} \to v$ in $L^2(0,T; L^2(\Gamma))$ and $v = \partial_t \eta$.
\begin{cor}\label{conv}
Assuming that $N(k)$ satisfies $\eqref{key}$, we have the following convergences:
\begin{enumerate}
\item[(1)] $\eta_{\Delta t,k},T_{\Delta t} \eta_{\Delta t,k}  \to \eta$, in $C([0,T] \times \overline{\Gamma})$;
\item[(2)] $\widetilde{\eta}_{\Delta t,k} \to \eta$ in
$L^\infty(0,T;C(\overline{\Gamma}))$;
\item[(3)]  $J_{\Delta t,k }$, $T_{\Delta t}J_{\Delta t,k } \to  J$ and $A_{\eta_{\Delta t,k}} \to A$, in $C([0,T] \times \overline{\Omega})$
\item[(4)] $\mathbf{w}_{\Delta t, k}\to \mathbf{w}$ in $L^2(0,T; L^2(\Omega))$;
\item[(5)] $(\nabla A_{\Delta t, k})^{-1}\to (\nabla A)^{-1}$ in
$C(0,T; H^{2s}(\Omega))$ for $s<1/2$,
\end{enumerate}
as $k \to \infty$.
\end{cor}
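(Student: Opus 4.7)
For items (1)--(3) the strategy is to leverage the uniform Hölder-in-time regularity already deduced: since $\eta_{\Delta t,k}$ is uniformly bounded in $W^{1,\infty}(0,T;L^2(\Gamma))\cap L^\infty(0,T;H_0^2(\Gamma))$, interpolation gives a uniform bound in $C^{0,1-\alpha}([0,T];H^{2\alpha}(\Gamma))$ for every $\alpha\in(0,1)$. Choosing $\alpha$ close to $1$ so that $2\alpha>1$, the Sobolev embedding $H^{2\alpha}(\Gamma)\hookrightarrow C(\overline{\Gamma})$ and Arzelà--Ascoli (the compact embedding $H^{2\alpha}\hookrightarrow H^{2\alpha-\epsilon}\hookrightarrow C(\overline\Gamma)$) yield, along the same subsequence identifying $\eta$, the convergence $\eta_{\Delta t,k}\to \eta$ in $C([0,T]\times\overline{\Gamma})$. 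The time-shifted version $T_{\Delta t}\eta_{\Delta t,k}$ differs from $\eta_{\Delta t,k}$ by at most $C(\Delta t)^{1-\alpha}$ in $C(\overline\Gamma)$ by the Hölder bound, so it has the same limit.

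For (2), note that $\widetilde{\eta}_{\Delta t,k}(t,X)=\eta_{\Delta t,k}^{n+1}((n+1)\Delta t,X)$ on $[n\Delta t,(n+1)\Delta t)$, so by the same Hölder-in-time estimate
\[
\|\widetilde{\eta}_{\Delta t,k}(t,\cdot)-\eta_{\Delta t,k}(t,\cdot)\|_{C(\overline{\Gamma})}\le C(\Delta t)^{1-\alpha},
\]
and (1) gives the conclusion via the triangle inequality. Item (3) then follows directly: $J_{\Delta t,k}=1+\widetilde{\eta}_{\Delta t,k}$, $T_{\Delta t}J_{\Delta t,k}=1+T_{\Delta t}\widetilde{\eta}_{\Delta t,k}$, and $A_{\eta_{\Delta t,k}}(t,X,z)=(X,(z+1)\widetilde{\eta}_{\Delta t,k}+z)$ are continuous, affine expressions in $\widetilde{\eta}_{\Delta t,k}$, so the limits follow from (2) and the uniform bound $1+\eta_{\Delta t,k}\ge C_{min}>0$ provided by Lemma \ref{112}.

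For (4), the LE velocity satisfies $\mathbf{w}_{\Delta t,k}=(z+1)\widetilde{\partial}_t\eta_{\Delta t,k}\mathbf{e}_3$ and $\mathbf{w}^\eta=(z+1)\partial_t\eta\mathbf{e}_3$. Lemma \ref{lemmakey}(5) gives $\widetilde{\partial}_t\eta_{\Delta t,k}-\partial_t\eta_{\Delta t,k}\to 0$ in $L^\infty(0,T;L^2(\Gamma))$, while the strong convergence $\partial_t\eta_{\Delta t,k}\to \partial_t\eta$ in $L^2(0,T;L^2(\Gamma))$ (combining Corollary \ref{comcol} with Lemma \ref{lemmakey}(5),(6) to identify $v=\partial_t\eta$) together with boundedness of the factor $(z+1)$ on $\Omega$ yield $\mathbf{w}_{\Delta t,k}\to\mathbf{w}$ in $L^2(0,T;L^2(\Omega))$.

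The main obstacle is (5), where convergence is claimed in the fractional Sobolev norm $C([0,T];H^{2s}(\Omega))$ with $s<1/2$. From the explicit formula
\[
(\nabla A_\eta)^{-1}=[\mathbf{e}_1,\mathbf{e}_2,\overline{A}_\eta],\qquad
\overline{A}_\eta=\frac{1}{\eta+1}\bigl[-(z+1)\partial_x\eta,\ -(z+1)\partial_y\eta,\ 1\bigr]^T,
\]
only the last column is nontrivial, and it depends on $\eta$ through the trivial extension (constant in $z$) of $\eta(t,X)$, $\partial_x\eta(t,X)$, $\partial_y\eta(t,X)$, multiplied by the bounded affine factor $(z+1)$. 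The plan is: first, from item (1) upgraded to $\eta_{\Delta t,k}\to\eta$ in $C([0,T];H^{2\alpha}(\Gamma))$ for every $\alpha<1$, derive $\partial_{x,y}\eta_{\Delta t,k}\to\partial_{x,y}\eta$ in $C([0,T];H^{2\alpha-1}(\Gamma))$; choosing $\alpha$ so that $2\alpha-1>2s$ and lifting trivially in $z$ gives convergence of the numerator in $C([0,T];H^{2s}(\Omega))$. Second, handle $1/(\eta+1)$ by exploiting the uniform positivity $\eta_{\Delta t,k}+1\ge C_{min}$ from Lemma \ref{112}, so $x\mapsto 1/x$ is smooth on a neighborhood of the range; the composition estimate in fractional Sobolev spaces $H^{2s}$ with $2s<1$ (for which it suffices to have Lipschitz composition, as $H^{2s}$ is an algebra-like space under bounded-below Lipschitz substitutions) converts the $C([0,T];H^{2\alpha}(\Gamma))$ convergence of $\eta_{\Delta t,k}$ into $C([0,T];H^{2s}(\Omega))$ convergence of $1/(\eta_{\Delta t,k}+1)$. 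Finally, the product rule in $H^{2s}(\Omega)$, valid for $2s<1$ provided one factor lies in $L^\infty$ and the other in $H^{2s}$, combines the two convergences. The delicate point is justifying the composition and product estimates in fractional order spaces, and it is here that the restriction $s<1/2$ is essential.
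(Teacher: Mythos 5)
The paper states this Corollary without proof, treating it as a routine consequence of the strong convergences established in the preceding paragraph (notably $\eta_{\Delta t,k}, T_{\Delta t}\eta_{\Delta t,k}\to\eta$ in $C([0,T];H^{2s}(\Gamma))$ for $0<s<1$, and $\partial_t\eta_{\Delta t,k},\widetilde{\partial}_t\eta_{\Delta t,k}\to\partial_t\eta$ in $L^2(0,T;L^2(\Gamma))$). Your items (1)--(4) are correct and simply spell out what the paper asserts: (1) needs $2\alpha>1$ to embed $H^{2\alpha}(\Gamma)\hookrightarrow C(\overline{\Gamma})$, (2) follows from the Hölder-in-time estimate, (3) follows because $J_{\Delta t,k}$ and $A_{\eta_{\Delta t,k}}$ are affine in the approximate displacement, and (4) correctly combines Lemma \ref{lemmakey}(5) with Corollary \ref{comcol}.

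For item (5), however, the product rule you invoke is not correct as stated. The inequality $\|fg\|_{H^{2s}}\lesssim\|f\|_{L^\infty}\|g\|_{H^{2s}}$ with one factor only in $L^\infty$ is false for $2s\in(0,1)$: writing $f(x)g(x)-f(y)g(y)=f(x)\bigl(g(x)-g(y)\bigr)+g(y)\bigl(f(x)-f(y)\bigr)$ and squaring against the Slobodeckij kernel, the second term requires control of the oscillation of $f$, which mere boundedness does not provide. What is true is the symmetric Leibniz bound $\|fg\|_{H^{r}}\lesssim\|f\|_{L^\infty}\|g\|_{H^{r}}+\|f\|_{H^{r}}\|g\|_{L^\infty}$, but in your setting the factor $(z+1)\partial_{x,y}\eta_{\Delta t,k}$ lies in $H^{2\alpha-1}$ and is not in $L^\infty$, so that bound does not apply either. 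The way to close (5) is to use the bilinear estimate $H^{s_1}(\Gamma)\cdot H^{s_2}(\Gamma)\subset H^{r}(\Gamma)$ valid when $\min(s_1,s_2)\geq r$ and $s_1+s_2>r+d/2$ (with $d=2$). Taking $s_1=2\alpha$ for $1/(\eta_{\Delta t,k}+1)$ (via the Lipschitz-composition argument you give) and $s_2=2\alpha-1$ for $\partial_{x,y}\eta_{\Delta t,k}$, the condition $4\alpha-1>2s+1$ (i.e.\ $\alpha>\tfrac{s+1}{2}$) together with $2\alpha-1\geq2s$ (i.e.\ $\alpha\geq s+\tfrac12$) is satisfiable for some $\alpha<1$ precisely because $s<1/2$, which explains the restriction on $s$. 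So the overall route is sound and matches the paper's intent, but the specific product estimate you cite needs to be replaced by the bilinear Sobolev estimate above.
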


\section{Proof of the main result}
In this section, we will first construct appropriate test functions in certain spaces that will converge to the test functions of the weak solution defined in Definition $\ref{weaksolution}$. After that, we will prove the convergence of approximate solutions term by term by using the convergence results obtained in section 4. At the end of the section, we will prove that the life span of the solution is either $+\infty$ or up to the moment of the free boundary touch the bottom.
Throughout this section we assume that $N(k)$ satisfies the condition $\eqref{key}$.

\subsection{Construction of test functions}
Now, for given test functions $(q,\psi) \in Q^\eta(0,T)$, where $\eta$ is the weak* limit of the sequence $\eta_{\Delta t,k}$ given in Lemma $\ref{111}$, we want to construct a sequence of test functions $\bb{q}_{\Delta t,k}$ and $\psi_{\Delta t,k}$ such that $(\bb{q}_{\Delta t,k}^n,\psi_{\Delta t,k}^n) \in \mathcal{W}_k^{\tilde{\eta}^{n-1}}$ for all $1 \leq n \leq N(k)$, and $(\bb{q}_{\Delta t,k},\psi_{\Delta t,k})\to(\bb{q},\psi)$ when $k\to \infty$ in a suitable space. 

Since the original problem and the original test functions are defined on the moving domain, we start by defining the uniform domains
\begin{eqnarray*}
\Omega_{max}= \bigcup_{\Delta t>0, n \in\mathbb{N}} \Omega^{\tilde{\eta}_{\Delta t,k}^n}, ~~\Omega_{min}= \bigcap_{\Delta t>0, n \in\mathbb{N}} \Omega^{\tilde{\eta}_{\Delta t,k}^n}
\end{eqnarray*}
We define $\chi_{max}(0,T)$ as the space of test functions $(\bb{r},\psi)$, where $\psi \in C_c^1([0,T); H_0^2(\Gamma))$ and $\bb{r} = \bb{r}_0+\bb{r}_1$ such that:
\begin{enumerate}
\item $\bb{r}_0$ is a smooth divergence free function with compact support in $\Omega^{\eta}(t)\cup \Sigma \cup \Gamma^\eta(t)$, extended by $0$ to $\Omega_{max} \setminus \Omega^{\eta}(t)$.
\item function
\begin{eqnarray}\label{ext}
\bb{r}_1= \begin{cases} 
(0,0,\psi)~~\text{ on } \Omega_{max}\setminus\Omega_{min} \\
\text{Divergence free extension to }\Omega_{min} ~ \text{(see }\cite[\text{p}. 127]{Galdi}).
\end{cases}
\end{eqnarray}
\end{enumerate} 
We then define
\begin{eqnarray*}
\chi^{\eta}(0,T)= \{ (\bb{q},\psi): \bb{q}(t ,X,z)=\bb{r} (t, X,z)_{|\Omega^{\eta}(t)}
\circ A_\eta(t), \text{ where } (\bb{r},\psi) \in \chi_{max}(0,T) \}.
\end{eqnarray*}
Notice that this function space is dense in $Q^\eta(0,T)$. \\

For approximate solution $X_{\Delta t,k}$ and given test functions $(\bb{q},\psi) \in \chi^{\eta}(0,T)$, with  
$$\bb{q}=\bb{r}_{|\Omega^{\eta}}\circ A_\eta, \quad \bb{r} = \bb{r}_0+ \bb{r}_1$$ 
being the decomposition we introduced,  define approximated test functions $(\bb{q}_{\Delta t,k}, \psi_{\Delta t,k})$ in the following way:
\begin{eqnarray*}
\psi_{\Delta t, k}(t, \cdot)=\psi_k(n\Delta t), ~~ t \in [n\Delta t, (n+1)\Delta t),
\end{eqnarray*}
where $\psi_k(t)$ is a projection of $\psi(t,\cdot )$ in $\mathcal{G}_k=\text{span}\{w_i : 1 \leq i \leq k \}$, and
\begin{eqnarray*}
\bb{q}_{\Delta t,k}(t, \cdot)= \bb{q}_{\Delta t,k}^{n}(t, \cdot) &:=&\bb{r}_k(n\Delta t,\cdot)_{|\Omega^{\tilde{\eta}_{\Delta t,k}^{n-1}}} \circ A_{\Delta t,k}^{n-1}, ~~t \in [n\Delta t, (n+1)\Delta t), \\
\bb{r}_k &=&\bb{r}_0 +\bb{r}_{1,k}
\end{eqnarray*}
where $\bb{r}_{1,k}$ is the extension of $(0,0,\psi_k)$ in the same way as given in $\eqref{ext}$.
Before we proceed to the convergence of approximate solutions, we first have some convergences for the test functions as follows: 

\begin{lem}\label{3.1}
For every $(\mathbf{q},\psi)\in \chi^\eta(0,T)$, we have:
\begin{eqnarray*} 
\bb{q}_{\Delta t,k} \to \bb{q}, ~~~\psi_{\Delta t, k} \to \psi, ~~~ \nabla \bb{q}_{\Delta t,k} \to \nabla \bb{q}, ~~ \text{uniformly on } [0,T] \times \Omega. 
\end{eqnarray*}
as $k\to \infty$.
\end{lem}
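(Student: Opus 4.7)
The plan is to handle the three uniform convergences in turn, drawing on the uniform convergences of $\eta_{\Delta t,k}$, $A_{\Delta t,k}$, $J_{\Delta t,k}$ and $(\nabla A_{\Delta t,k})^{-1}$ from Corollary~\ref{conv}, together with standard Galerkin projection estimates in $H_0^2(\Gamma)$ and the piecewise constant in time interpolation bound of order $\dt\,\|\partial_t\psi\|_\infty$.

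First, for $\psi_{\Delta t,k}\to\psi$, I would use that $\{w_i\}$ is an orthonormal basis of $H_0^2(\Gamma)$ to conclude $\psi_k(t)\to\psi(t)$ in $H_0^2(\Gamma)$ for every $t$. Since $\psi\in C_c^1([0,T);H_0^2(\Gamma))$ has a relatively compact trajectory in $H_0^2(\Gamma)$, the pointwise convergence of the Galerkin projections upgrades to uniform convergence in $t$. Combining this with the piecewise constant interpolation error at the nodes $n\dt$ yields $\psi_{\Delta t,k}\to\psi$ in $C([0,T];H_0^2(\Gamma))$, and the Sobolev embedding $H_0^2(\Gamma)\hookrightarrow C(\overline{\Gamma})$ on the planar domain $\Gamma$ turns this into uniform convergence on $[0,T]\times\overline{\Gamma}$.

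For $\bb{q}_{\Delta t,k}\to\bb{q}$ I would split $\bb{r}_k=\bb{r}_0+\bb{r}_{1,k}$. The piece $\bb{r}_0$ is smooth, $k$-independent and compactly supported, so uniform convergence of $\bb{r}_0\circ A_{\Delta t,k}^{n-1}$ to $\bb{r}_0\circ A_\eta$ follows from the uniform continuity of $\bb{r}_0$, the convergence $A_{\Delta t,k}\to A_\eta$ in $C([0,T]\times\overline{\Omega})$ from Corollary~\ref{conv}(3), and the time interpolation bound. For $\bb{r}_{1,k}$, the Bogovskii-type divergence-free extension is a bounded linear operator from the boundary datum $(0,0,\psi_k)\in H_0^2(\Gamma)$ into $H^2(\Omega_{min})$; the first step then lifts to $\bb{r}_{1,k}\to\bb{r}_1$ in $C([0,T];H^2(\Omega_{max}))$, hence uniformly on $[0,T]\times\overline{\Omega_{max}}$ via the three-dimensional embedding $H^2\hookrightarrow C^{0,1/2}$. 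Composing with $A_{\Delta t,k}\to A_\eta$ closes this step.

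The third claim is the most delicate. By the chain rule on the fixed domain $\Omega$ one has
\begin{equation*}
\nabla\bb{q}_{\Delta t,k}=\big((\nabla\bb{r}_k)\circ A_{\Delta t,k}^{n-1}\big)\,\nabla A_{\Delta t,k}^{n-1}\quad\text{on }[n\dt,(n+1)\dt),
\end{equation*}
with an analogous formula for $\nabla\bb{q}$. I would write the difference as a sum of $\big((\nabla\bb{r}_k-\nabla\bb{r})\circ A_{\Delta t,k}\big)\,\nabla A_{\Delta t,k}$, $(\nabla\bb{r}\circ A_{\Delta t,k})\,(\nabla A_{\Delta t,k}-\nabla A_\eta)$, and a composition-difference remainder of the form $(\nabla\bb{r}\circ A_{\Delta t,k}-\nabla\bb{r}\circ A_\eta)\,\nabla A_\eta$. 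The first term is controlled by upgrading the previous step one derivative via the $H_0^2(\Gamma)\to H^2$ continuity of the extension, together with the smoothness of $\bb{r}_0$ and the fact that each $\psi_k$ is a finite linear combination of smooth biharmonic eigenfunctions, so $\bb{r}_{1,k}$ is smooth for fixed $k$. The third term vanishes uniformly from the uniform continuity of $\nabla\bb{r}$ and the uniform convergence of $A_{\Delta t,k}$. The main obstacle is the second term, which requires uniform convergence of $\nabla\widetilde\eta_{\Delta t,k}\to\nabla\eta$ on $\overline{\Gamma}$; this must be extracted from $\eta_{\Delta t,k}\to\eta$ in $C([0,T];H^{2s}(\Gamma))$ for $s<1$ by taking $s$ close to $1$, interpolating against the uniform bound on $\eta_{\Delta t,k}$ in $L^\infty(0,T;H_0^2(\Gamma))$ given by Lemma~\ref{111}, and invoking the Sobolev embedding on the two-dimensional domain $\Gamma$.
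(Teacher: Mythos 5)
Your treatment of the first two convergences, $\psi_{\Delta t,k}\to\psi$ and $\bb{q}_{\Delta t,k}\to\bb{q}$, is essentially the paper's: the paper writes $\bb{q}_{\Delta t,k}(t)-\bb{q}(t)$ as the sum of a Galerkin remainder $(\bb{r}_k-\bb{r})\circ A_{\Delta t,k}^{n-1}$, a time-interpolation term controlled by the mean-value theorem, and a composition remainder controlled by the uniform convergence $A_{\Delta t,k}\to A_\eta$ from Corollary~\ref{conv}. Your explicit split $\bb{r}_k=\bb{r}_0+\bb{r}_{1,k}$ is a clean reorganization of the same decomposition.

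The third convergence is where your proposal has a genuine gap. You correctly notice that
\begin{equation*}
\nabla\bb{q}_{\Delta t,k}=\big((\nabla\bb{r}_k)\circ A_{\Delta t,k}^{n-1}\big)\,\nabla A_{\Delta t,k}^{n-1},
\end{equation*}
so the factor $\nabla A_{\Delta t,k}^{n-1}$, whose entries contain $(z+1)\nabla_X\widetilde\eta_{\Delta t,k}^{n-1}$, must be controlled --- something the paper's terse ``the same argument applies'' glosses over. But the remedy you propose does not work: interpolating the $C([0,T];H^{2s}(\Gamma))$ convergence ($s<1$) against the $L^\infty(0,T;H_0^2(\Gamma))$ bound can never take you past $H^2(\Gamma)$, so $\nabla\eta$ lives only in $H^1(\Gamma)$, and on the two-dimensional domain $\Gamma$ the embedding $H^1(\Gamma)\hookrightarrow L^\infty(\Gamma)$ fails (critical, log-borderline case). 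Hence $\nabla\widetilde\eta_{\Delta t,k}\to\nabla\eta$ cannot be made uniform; in fact $\nabla\eta$, and therefore $\nabla\bb{q}$, need not even be bounded, so the uniform convergence asserted in the lemma is out of reach at this regularity of the structure displacement. What does hold, and what the downstream limit passage actually uses, is either the uniform convergence of the moving-frame gradient $\nabla^{\widetilde\eta_{\Delta t,k}^{n-1}}\bb{q}_{\Delta t,k}=(\nabla\bb{r}_k)\circ A_{\Delta t,k}^{n-1}$ --- for which your first-step argument carries over unchanged, with no $\nabla A$ factor to control --- or convergence of the fixed-domain gradient in $L^p([0,T]\times\Omega)$ for every finite $p$, via $\nabla\widetilde\eta_{\Delta t,k}\to\nabla\eta$ in $C([0,T];L^p(\Gamma))$. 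You should replace the proposed interpolation step by one of these weaker but attainable statements.
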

\begin{proof}
For $t \in [n\Delta t, (n+1)\Delta t)$,
\begin{align}
\bb{q}_{\Delta t,k}(t) - \bb{q}(t) &=\bb{r}_k\big(n\Delta t,A_{\Delta t,k}^{n-1}\big) - \bb{r}\big(t,A_\eta(t)\big) \nonumber \\
&=\Big(\bb{r}_k\big(n\Delta t,A_{\Delta t,k}^{n-1}\big) -\bb{r}\big(n\Delta t,A_{\Delta t,k}^{n-1}\big)\Big)
 +\bb{r}\big(n\Delta t,A_{\Delta t,k}^{n-1}\big)  - \bb{r}\big(t,A_\eta(t)\big) \label{testfunctions}
\end{align}
Since $\bb{r}_k \to\bb{r}$ when $k \to \infty$,  the first difference on the right hand side of  $\eqref{testfunctions}$ goes to $0$, so we only need to study the other difference. Decompose it into
\begin{eqnarray*}
\bb{r}\big(n\Delta t,A_{\Delta t,k}^{n-1}\big)  - \bb{r}\big(t,A_\eta(t)\big) = \bb{r}\big(n \Delta t,A_\eta(t)\big) - \bb{r}\big(t,A_\eta(t) \big) +\bb{r}\big(n\Delta t,A_{\Delta t,k}^{n-1}\big)  - \bb{r}\big(n \Delta t,A_\eta(n\Delta t)\big).
\end{eqnarray*}
By the mean-value Theorem, there is $\beta \in [0,1]$ such that
\begin{eqnarray*}
 \bb{r}\big(n \Delta t,A_\eta(t)\big) - \bb{r}\big(t,A_\eta(t)\big) = \partial_t \bb{r}\big(n\Delta t + \beta (t- n\Delta t), A_\eta(t)\big) (t- n\Delta t),
\end{eqnarray*}
which converges to $0$  as $k\to \infty$, and the remaining one
\begin{eqnarray*}
\bb{r}\big(n\Delta t,A_{\Delta t,k}^{n-1}\big)  - \bb{r}\big(n \Delta t,A_\eta(n\Delta t)\big) \to 0, ~~\text{as } k\to \infty
\end{eqnarray*}
from the strong convergence of $A_{\Delta t,k} \to A$. The same argument applies  to deduce $\psi_{\Delta t, k} \to \psi$ and $\nabla \bb{q}_{\Delta t,k} \to \nabla \bb{q}$ as $k\to \infty$.
\end{proof}

\begin{lem}\label{takoto}
We have:
\begin{eqnarray*}
\frac{\psi_{\Delta t,k }^{n+1}-\psi_{\Delta t,k }^{n}}{\Delta t}=: \text{d}\psi_{\Delta t,k }^n \to \partial_t \psi \text{ in } L^2(0,T; L^2(\Omega)), ~~ \text{as } k\to +\infty,
\end{eqnarray*}
and
\begin{eqnarray*}
\frac{\bb{q}_{\Delta t,k }^{n+1}-\bb{q}_{\Delta t,k }^{n}}{\Delta t}=: \text{d}\bb{q}_{\Delta t,k }^n \to \partial_t \bb{q} \text{ in } L^2(0,T; L^2(\Omega)), ~~ \text{as } k\to + \infty.
\end{eqnarray*}
\end{lem}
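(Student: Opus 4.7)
The plan is to treat the two convergences separately, isolating in each case the effect of the time discretization from the effects of the Galerkin projection (for $\psi$) and of the LE map (for $\bb{q}$). For the plate test function, since $\psi_{\Delta t,k}^n = \psi_k(n\Delta t)$ and $\psi_k(t)$ is the $L^2$-orthogonal projection of $\psi(t,\cdot)$ onto $\mathcal{G}_k$, the projection commutes with $\partial_t$, so
\begin{eqnarray*}
\text{d}\psi_{\Delta t,k}^n = \frac{1}{\Delta t}\int_{n\Delta t}^{(n+1)\Delta t}\partial_t \psi_k(s)\,ds,
\end{eqnarray*}
where $\partial_t \psi_k$ is the projection of $\partial_t \psi$. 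Since $\psi \in C_c^1([0,T);H_0^2(\Gamma))$, $\partial_t\psi_k \to \partial_t \psi$ uniformly in $t$ and in $L^2(\Gamma)$ as $k \to \infty$, and the piecewise time-average converges to $\partial_t \psi$ as $\Delta t \to 0$ by continuity in time (alternatively, by the Lebesgue differentiation theorem). Combining these (both occur simultaneously since $\Delta t = T/N(k)$) yields the desired strong $L^2(0,T;L^2(\Gamma))$ convergence.

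For the fluid test function I would decompose the discrete derivative as
\begin{eqnarray*}
\text{d}\bb{q}_{\Delta t,k}^n = \underbrace{\frac{\bb{r}_k((n+1)\Delta t,A_{\Delta t,k}^n) - \bb{r}_k(n\Delta t,A_{\Delta t,k}^n)}{\Delta t}}_{=:I_1^n} + \underbrace{\frac{\bb{r}_k(n\Delta t,A_{\Delta t,k}^n) - \bb{r}_k(n\Delta t,A_{\Delta t,k}^{n-1})}{\Delta t}}_{=:I_2^n}.
\end{eqnarray*}
The term $I_1^n$ is handled exactly as above: writing it as an average of $\partial_t \bb{r}_k$ composed with $A_{\Delta t,k}^n$ and using uniform convergence $A_{\Delta t,k} \to A_\eta$ from Corollary \ref{conv}(3) together with $\bb{r}_k \to \bb{r}$ (smoothness of the test function) yields $I_1^n \to \partial_t \bb{r}(t,A_\eta(t))$ in $L^2(0,T;L^2(\Omega))$.

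For $I_2^n$ the key observation is that $A_{\Delta t,k}^n - A_{\Delta t,k}^{n-1} = (0,0,(z+1)(\tilde{\eta}^n_{\Delta t,k}-\tilde{\eta}^{n-1}_{\Delta t,k}))$, so by the mean-value theorem applied in the $z$-coordinate,
\begin{eqnarray*}
I_2^n = \partial_z\bb{r}_k(n\Delta t, \Theta_{\Delta t,k}^n)\,(z+1)\,\widetilde{\partial}_t\eta_{\Delta t,k}^n,
\end{eqnarray*}
for some intermediate point $\Theta_{\Delta t,k}^n$ lying between $A_{\Delta t,k}^{n-1}$ and $A_{\Delta t,k}^n$. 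Recalling that $\bb{w}^{n}_{\Delta t,k} = \widetilde{\partial}_t\eta^n_{\Delta t,k}\bb{e}_3$ and using Corollary \ref{conv}(3)--(4), together with Lemma \ref{lemmakey}(5) which identifies $\widetilde{\partial}_t\eta_{\Delta t,k}$ and $\partial_t\eta_{\Delta t,k}$ in the limit, I would pass to the limit in $L^2(0,T;L^2(\Omega))$ to obtain $I_2^n \to (z+1)\partial_t\eta\,\partial_z\bb{r}(t,A_\eta(t))$. Since $\bb{q}(t,X,z) = \bb{r}(t,X,(z+1)\eta(t,X)+z)$, the chain rule gives $\partial_t\bb{q} = \partial_t\bb{r}(t,A_\eta(t)) + (z+1)\partial_t\eta\,\partial_z\bb{r}(t,A_\eta(t))$, which matches the sum of the two limits.

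The main obstacle is the $L^2(0,T;L^2(\Omega))$ convergence of $I_2^n$: the factor $\widetilde{\partial}_t\eta_{\Delta t,k}^n$ converges only strongly in $L^2(0,T;L^2(\Gamma))$ (not uniformly), so one cannot estimate $I_2^n - $ limit in $L^\infty$. However, because $\bb{r}$ is smooth and compactly supported, $\partial_z\bb{r}_k(n\Delta t,\Theta_{\Delta t,k}^n)$ is uniformly bounded and converges uniformly to $\partial_z\bb{r}(t,A_\eta(t))$, which is enough to multiply by an $L^2$-convergent factor and still obtain $L^2$ convergence of the product. The $\psi$-part in the kinematic coupling $\bb{q}|_\Gamma = \psi\bb{e}_3$ is preserved along the approximation by construction, so no further compatibility issue arises.
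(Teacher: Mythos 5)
Your argument is correct and follows essentially the same route as the paper's proof: decompose the discrete time quotient by inserting an intermediate cross term, apply the mean-value theorem (or equivalently, write the difference as a time average of the derivative), and invoke the convergences $\bb{r}_k\to\bb{r}$, $A_{\Delta t,k}\to A_\eta$, $\bb{w}_{\Delta t,k}\to\bb{w}^\eta$ from Lemma~\ref{3.1} and Corollary~\ref{conv}. The only genuine difference is the order of your cross-splitting: you vary the time argument first at the fixed \emph{later} map $A_{\Delta t,k}^n$ and then the space argument at the fixed \emph{earlier} time $n\Delta t$, whereas the paper varies the space argument first at the later time $(n+1)\Delta t$ and then the time argument at the earlier map $A_{\Delta t,k}^{n-1}$. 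Both orderings produce the same pair of limits, $\partial_t\bb{r}(t,A_\eta(t))$ and $\bb{w}^\eta\cdot\nabla\bb{r}(t,A_\eta(t))$, and you add value by (i) observing explicitly that $A_{\Delta t,k}^n-A_{\Delta t,k}^{n-1}$ is supported only in the third coordinate, so only $\partial_z\bb{r}_k$ enters, (ii) verifying the identity $\partial_t\bb{q}=\partial_t\bb{r}+\bb{w}^\eta\cdot\nabla\bb{r}$ by the chain rule rather than merely stating it, and (iii) flagging that the $L^2$-convergence of the LE velocity factor must be paired with uniform control of the smooth test function so the product still converges in $L^2(0,T;L^2(\Omega))$ — a point the paper leaves implicit. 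One caveat: the claimed \emph{uniform} convergence of $\partial_z\bb{r}_k$ is somewhat optimistic for the $\bb{r}_{1,k}$ component, since $\psi_k\to\psi$ in $H_0^2(\Gamma)$ controls the gradient only up to $H^1\hookrightarrow L^p$, $p<\infty$, in two space dimensions; a uniform $L^\infty$-in-space bound together with $L^2$ strong convergence of $\bb{w}_{\Delta t,k}$ is in fact enough, and this is the same level of rigor the paper itself adopts, so it is not a gap relative to the target.
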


\begin{proof}
Obviously, we have
\begin{eqnarray*}
\frac{\psi_{\Delta t,k }^{n+1}-\psi_{\Delta t,k }^{n}}{\Delta t}= \frac{\psi_k((n+1)\Delta t)-\psi_{k }(n\Delta t)}{\Delta t} =\partial_t \psi(n\Delta t+\beta \Delta t) - \sum_{i=k+1}^\infty  \big(\partial_t \psi(n\Delta t+\beta \Delta t), w_i \big) w_i,
\end{eqnarray*}
for a $\beta \in (0,1)$. The last term goes to $0$ when $k \to +\infty$ by using the boundedness of $\partial_t  \psi$ in $L^\infty(0,T; L^2(\Gamma))$.

For the second convergence, we use similar argument as in the previous Lemma to get:
\begin{eqnarray*}
&\ddfrac{\bb{q}_{\Delta t,k }^{n+1}-\bb{q}_{\Delta t,k }^{n}}{\Delta t}= \ddfrac{1}{\Delta t} \Big( \bb{r}_k\big((n+1)\Delta t, A_{\Delta t,k}^{n}) - \bb{r}_k\big(n\Delta t, A_{\Delta t,k}^{n-1}\big)  \Big)& \\
&= \ddfrac{1}{\Delta t} \Big( \bb{r}_k\big((n+1)\Delta t, A_{\Delta t,k}^{n}\big) - \bb{r}_k\big((n+1)\Delta t, A_{\Delta t,k}^{n-1}\big) + \bb{r}_k\big((n+1)\Delta t, A_{\Delta t,k}^{n-1}\big) - \bb{r}_k\big(n\Delta t, A_{\Delta t,k}^{n-1}\big)  \Big) &\\
&=\nabla \bb{r}_k \big((n+1)\Delta t,\delta  \big)\ddfrac{A_{\Delta t,k}^{n}-A_{\Delta t,k}^{n-1}}{\Delta t} +\partial_t \bb{r}_k \big( (n+\beta)\Delta t , A_{\Delta t,k}^{n-1} \big),&
\end{eqnarray*}
with $\delta =A_{\Delta t,k}^{n-1}+ \gamma \big(A_{\Delta t,k}^{n}-A_{\Delta t,k}^{n-1}\big)$ for $\beta,\gamma \in [0,1]$. In the decomposition $\bb{r}_k = \bb{r}_0 +\bb{r}_{1,k}$, since term $\bb{r}_{1,k}$ is a simple extension of $\psi_k$, we have that $\nabla \bb{r}_{1,k}$ is the extension of $\nabla \psi_k$ and $\partial_t \bb{r}_{1,k}$ is the extension of $\partial_t \psi_k$. From the convergence $ \psi_k \to \psi$ in $C_c^1([0,T); H_0^2(\Gamma))$, we obtain $\nabla \bb{r}_k \to \nabla \bb{r}$ and $\partial_t \bb{r}_k \to \partial_t \bb{r}$ as $k\to +\infty$. By using the convergence of $A_{\Delta t,k} \to A_\eta$, the term $\frac{A_{\Delta t,k}^{n}-A_{\Delta t,k}^{n-1}}{\Delta t} =\bb{w}^{n}$ converges to $\bb{w}^\eta$ as $k\to +\infty$, we obtain
\begin{eqnarray*}
d \bb{q}_{\Delta t,k} \to \bb{w}^\eta \cdot \nabla \bb{r} + \partial_t \bb{r} = \partial_t \bb{q} ~~\text{ in } L^2(0,T ; L^2(\Omega)) \quad {\rm as}~k\to +\infty.
\end{eqnarray*}
\end{proof}

\subsection{Passage to the limit}

To define the approximate problem, for every $(\mathbf{q},\psi) \in \chi^\eta(0,T)$, by taking the sum $\eqref{SSP}$ and $\eqref{FSP}$ with test functions $\mathbf{q}_{\Delta t,k}$ and $\psi_{\Delta t,k}$, we get
\begin{align}
&\bint_0^T \Bigg[\ddfrac{1}{2} \bint_\Omega T_{\Delta t} J_{\Delta t, k} \big( ((T_{\Delta t}\mathbf{u}_{\Delta t,k} -\bb{w}_{\Delta t, k})\cdot
\nabla^{\eta_{\Delta t, k}})\mathbf{u}_{\Delta t, k} \cdot \mathbf{q}_{\Delta t, k} - ((T_{\Delta t}\mathbf{u}_{\Delta t,k}-\bb{w}_{\Delta t, k} ) \cdot
\nabla^{\eta_{\Delta t, k}}) \mathbf{q}_{\Delta t, k} \cdot \mathbf{u}_{\Delta t, k} \big)  \nonumber \\  &\nonumber \\
& +\bint_{\Omega} T_{\Delta t}J_{\Delta t, k} \partial_t\mathbf{u}_{\Delta t, k}^* \cdot \mathbf{q}_{\Delta t, k} +
\ddfrac{1}{2}\bint_\Omega \ddfrac{J_{\Delta t, k}-T_{\Delta t} J_{\Delta t,k}}{\Delta t} \mathbf{u}_{\Delta t, k}
\cdot \mathbf{q}_{\Delta t,k}+
(F(\eta_{\Delta t, k}), \psi_{\Delta t, k})+(\Delta \eta_{\Delta t, k}(t), \Delta \psi_{\Delta t, k}) \nonumber \\ & \nonumber \\
 &+2 \mu \bint_\Omega J^{n+1} \bb{D}^{\eta_{\Delta t, k}} (\mathbf{u}_{\Delta t, k}):\bb{D}^{\eta_{\Delta t, k}}(\mathbf{q}_{\Delta t, k})  
+\bint_\Gamma \partial_t v_{\Delta t, k}^* \cdot \psi_{\Delta t, k}  \Bigg] dt= 0, \label{thelasteq}
\end{align}
where $T_\dt f(t) = f(t-\Delta t)$ is the translation in time operator, and $\mathbf{u}_{\Delta t,k}^*$ and $v_{\Delta t,k}^*$ are the piece-wise linear approximations of $\mathbf{u}_{\Delta t,k}$ and $v_{\Delta t,k}$, i.e. for $t \in [n\Delta t, (n+1)\Delta t]$
\begin{eqnarray*}
\bb{u}_{\Delta t,k}^*(t) &=& \bb{u}_{\Delta t,k}^{n}+\ddfrac{\bb{u}_{\Delta t,k}^{n+1}-\bb{u}_{\Delta t,k}^{n}}{\Delta t} (t-n\Delta t), \\
v_{\Delta t,k}^*(t) &=& v_{\Delta t,k}^{n}+\ddfrac{v_{\Delta t,k}^{n+1}-v_{\Delta t,k}^{n}}{\Delta t} (t-n\Delta t).
\end{eqnarray*}
Notice that replacing the discretized time derivative with the time derivative of the piece-wise linear approximations does not change the value of the corresponding two integral terms. We rewrite the approximate problem $\eqref{thelasteq}$ as $\sum_{i=1}^8 I_i=0$, where $I_i$ represents each integral term on the left side of \eqref{thelasteq} for $1\le i\le 8$.

We still don\rq{}t know if the limit of $\nabla^{\tilde{\eta}_{\Delta t,k}} \mathbf{u}_{\Delta t,k}\to \nabla^\eta \mathbf{u}$. Since our space regularity on the fixed domain for $\mathbf{u}$ is less than $H^1$, this limit had to be passed on the moving domain. It was proved in \cite{multilayer,multilayer2} that:
\begin{eqnarray*}
\nabla^{\tilde{\eta}_{\Delta t,k}} \mathbf{u}_{\Delta t,k} \to \nabla^{\eta} \mathbf{u}, \text{ weakly in } L^2(0,T; L^2(\Omega)), ~~\text{as } k \to \infty.
\end{eqnarray*}

Now, by using the weak and strong convergences obtained in the section 4, we are going to prove the convergences for all the terms $I_i$
\begin{enumerate}
\item Terms $I_1$ and $I_2$: Follows from the convergences of functions $T_{\Delta t}J_{\Delta t,k}, \bb{u}_{\Delta t,k}, \bb{w}_{\Delta t,k}$ and $\bb{q}_{\Delta t,k}$ and from the weak convergence of $\nabla^{\tilde{\eta}_{\Delta t,k}} \mathbf{u}_{\Delta t,k}$ and convergence of $\nabla^{\widetilde{\eta}_{\Delta t,k}}\bb{q}$ which follows from the convergences of $\nabla \bb{q}_{\Delta t,k}$ and $A_{\Delta t,k}$. Since
\begin{eqnarray*}
||T_{\Delta t}\bb{u}_{\Delta t,k} - \bb{u}_{\Delta t,k}||_{L^2(0,T; L^2(\Omega))}^2 \leq C \sum_{n=1}^{N}||\bb{u}_{\Delta t,k}^{n+1} - \bb{u}_{\Delta t,k}^n||_{L^2(\Omega)}^2\Delta t \leq C\Delta t
\end{eqnarray*}
due to Lemma $\ref{111}$ $(6)$, function $T_{\Delta t} \bb{u}_{\Delta t,k}$ also converges to $\bb{u}$. 
\item Terms $I_3$ and $I_8$: Follows by partial integration on every sub-interval while being careful about left and right limits of $\mathbf{q}_{\Delta t,k}$ and $\psi_{\Delta t,k}$ at points $n\Delta t$ and by the mean-value Theorem (see \cite[section 7.2]{BorSun} or \cite[section 7.2]{multilayer}):
\begin{eqnarray*}
\int_0^T \int_{\Omega} T_{\Delta t} J_{\Delta t, k} \partial_t\widetilde{\mathbf{u}}_{\Delta t, k} \cdot \mathbf{q}_{\Delta t, k}   \to 
\int_0^T \int_{\Omega} \partial_t J \bb{u} \bb{q}  -\int_0^T \int_{\Omega} J_0 \mathbf{u}_{0} \partial_t \mathbf{q}(0) - \int_0^T \int_\Omega J \bb{u} \partial_t \bb{q}
\end{eqnarray*}
and
\begin{eqnarray*}
\int_0^T\int_\Gamma \widetilde{v}_{\Delta t, k} \cdot \psi_{\Delta t, k} \to    \int_{\Gamma} v_{0} \psi_{0}- \int_0^T\int_\Gamma  \partial_t \eta_{0} \cdot \psi_{0} 
\end{eqnarray*}

Here we also used the strong convergence of initial data in the corresponding norms.
\item Term $I_4$: Since 
\begin{eqnarray*}
\ddfrac{J_{\Delta t, k}-T_{\Delta t} J_{\Delta t,k}}{\Delta t} = \widetilde{\partial}_t \eta_{\Delta t,k},
\end{eqnarray*}
convergence of this term follows from the convergence of  $\widetilde{\partial}_t \eta_{\Delta t,k}$.
\item Terms $I_5,I_6$ and $I_7$: Directly from the weak convergences of the corresponding terms and the convergence of the term $\mathcal{F}(\eta_{\Delta t,k})$ in $H^{-2}(\Gamma)$.
\end{enumerate}
Notice that we had to take more regular test functions to pass the limit in the term $I_1$ since the function $\bb{w}_{\Delta t,k}$ converges only in $L^2(0,T; L^2(\Omega))$. On the other hand, since the function $\bb{w} = (0,0,\partial_t \eta) \in L^2(0,T; L^3(\Omega))$ (due to the imbedding of Sobolev spaces), and since the other limiting functions are regular enough, the limiting problem is still satisfied for the test functions from $Q^\eta(0,T)$ by the density argument.

Thus, we have proven the existence of a weak solution in the sense of the Definition 2.1. The energy estimate in the Theorem 2.1 follows from the Lemma $\ref{111}(1)$ and the coercivity estimate of the potential $\Pi$ given in assumption $(A3)$, by using the lower semicontinuity of norms. The remaining part is to study the life span of the solution. We follow the approach given in \cite[pp. 397-398]{time} (see also \cite[Theorem 7.1]{BorSun}). For initial data $X_0$ we solve the problem on the time interval $[0,T_0]$. Let 
\begin{eqnarray*}
\displaystyle\lim_{t \to T_0}\displaystyle\min_{ X \in \Gamma } \eta(t, X)+1=C_0>0
\end{eqnarray*}
Since $\eta$ is uniformly bounded in $W^{1,\infty}(0,T; L^2(\Gamma))\cap L^\infty(0,T; H_0^2(\Gamma))$ which is embedded into $C^{0, 1/4}([0,T]; H^{3/2}(\Gamma))$, we can choose $T_1>T_0$ such that
\begin{eqnarray*}
||\eta_{\Delta t,k}(T_1)-\eta_{\Delta t,k}(T_0)||_{
C(\overline{\Gamma})}\leq C||\eta_{\Delta t,k}(T_1)-\eta_{\Delta t,k}(T_0)||_{H^{3/2}(\Gamma)}
\leq 2C(T_1-T_0)^{1/4},
\end{eqnarray*}
so for $T_1-T_0 \leq \big(\frac{C_0}{2C}\big)^4$, we can prolong the solution defined in $[0,T_0]$ to be in $[0,T_1]$ and
\begin{eqnarray*}
\displaystyle\lim_{t \to T_1}\displaystyle\min_{ X \in \Gamma } \eta(t, X)+1=C_1> C_0/2.
\end{eqnarray*} 
Now we repeat this procedure and obtain a sequence of $C_n = \displaystyle\lim_{t \to T_n}\displaystyle\min_{ X \in \Gamma } \eta(t, X)+1$, and if $\displaystyle\lim_{n \to \infty} C_n = 0$, then we take $T = \displaystyle\lim_{n \to \infty} T_n$. Otherwise, there exists a $C^*>0$ such that $C_n \geq C^*$, for all $n \in \mathbb{N}$. Now, since
\begin{eqnarray*}
1+ \eta(T_{n+1},X) \geq 1+ \eta(T_n, X) - 2C(T_{n+1}-T_n)^{1/4} \geq C^* -  2C(T_{n+1}-T_n)^{1/4},
\end{eqnarray*}
we have that
\begin{eqnarray*}
T_{n+1}-T_n \geq  \Big(\frac{C^*}{1+ \eta(T_{n+1},X)}\Big)^4
\end{eqnarray*}
Since $\eta$ is uniformly bounded in $C([0,T] \times \overline{\Gamma} )$, we obtain that $T_{n+1}-T_n\geq C >0$, so $ \displaystyle\lim_{n \to \infty} T_n = \infty$. This finishes the proof of Theorem 1.1.

\subsection*{Appendix A: The plate models}
Here, we will give some concrete examples for the nonlinear elastic force $\mathcal{F}(\eta)$ that appears in the plate equation $\eqref{plateeq}$. The following models were studied in $\cite{igor,igor2,platesproofs}$.\\ \\
\textbf{The Kirchhoff model}: Here the nonlinear elastic force takes the form of the Nemytskii operator:
\begin{eqnarray*}
\mathcal{F}(\eta) = - \nu  \cdot \text{div} \big[ |\nabla \eta|^q  \nabla \eta - \mu |\nabla \eta|^r \nabla \eta \big] + f(\eta) -h(x),
\end{eqnarray*}
where $\nu \geq 0$, $q>r \geq 0$, and $\mu \in \mathbb{R}$ are parameters, $h\in L^2(\Gamma)$ and
\begin{eqnarray}\label{propertyforf}
f \in \text{Lip}_{loc}(\mathbb{R}) \quad \text{satisfies} \quad \lim\limits_{|s|\to \infty}\inf f(s)s^{-1} > -\lambda_1^2, 
\end{eqnarray}
where $\lambda_1$ is the first eigenvalue for the Laplacian with the Dirichlet boundary condition. The corresponding potential is
\begin{eqnarray}\label{potentialkirch}
\Pi(\eta) = \int_\Gamma \Phi(\eta) + \frac{\nu}{q+2} \int_{\Gamma} |\nabla \eta|^{q+2} - \frac{\nu \mu}{r+2} \int_\Gamma |\nabla \eta|^{r+2} - \int_\Gamma \eta h
\end{eqnarray}
where $\Phi(s) = \int_0^s f(x) dx$. To verify the assumptions $(A1)$ and $(A2)$, let's estimate the  difference (for $s \geq 0$)
\begin{eqnarray*}
|| \text{div} \big[ |\nabla \eta_1|^s  \nabla \eta_1 -  |\nabla \eta_2|^s  \nabla \eta_2\big] ||_{H^{-a}(\Gamma)} &\leq& C||  |\nabla \eta_1|^s  \nabla \eta_1 -  |\nabla \eta_2|^s  \nabla \eta_2 ||_{H^{1-a}(\Gamma)}  \\
 &\leq& C || \eta_1 - \eta_2||_{H^{2-\varepsilon}(\Gamma)} (||\eta_1||_{H^2(\Gamma)}^s + ||\eta_2||_{H^2(\Gamma)}^s)
\end{eqnarray*}
for $0< a \leq 1$ and $0< \varepsilon<a$ (see $\cite[\text{Section } 5.1]{platesproofs}$ for more details on the derivation of this inequality). This gives us that assumptions $(A1)-(A2)$ hold for any $a>0$ and $0<\varepsilon<a$. 

To prove the assumption $(A3)$, notice that the property $\eqref{propertyforf}$ gives us that there exists $0<\gamma<\lambda_1^2$ such that $\Phi(s) \geq -\gamma s^2/2-C$, for all $s$, and since there is a $\gamma'$ such that $\gamma< \gamma' <\lambda_1^2$ for which
\begin{eqnarray*}
- \int_\Gamma \eta h \geq - \frac{1}{2}\int_\Gamma (\gamma'-\gamma)\eta^2 - \frac{1}{ (\gamma'-\gamma)} h^2 \geq - \frac{1}{2}\int_\Gamma (\gamma'-\gamma)\eta^2  - C
\end{eqnarray*}
we obtain
\begin{eqnarray*}
\int_\Gamma \Phi(\eta)  - \int_\Gamma \eta h \geq -\frac{\gamma'}{2} \int_\Gamma \eta^2-C
\end{eqnarray*}
Noticing that the function $C_1 |x|^{q+2} - C_2|x|^{r+2}$, $C_1> 0,C_2 \in \mathbb{R}$ is bounded from below by a constant, one concludes that the two middle terms on the right hand side of $\eqref{potentialkirch}$ can be bounded from below by a constant. Next, by squaring the both sides of the harmonic eigenvalue problem, since $\lambda_1$ is its lowest eigenvalue, we can conclude that
\begin{eqnarray*}
||\Delta \eta ||_{L^2(\Gamma)}^2 \geq \lambda_1^2 || \eta ||_{L^2(\Gamma)}^2, \quad \forall \eta \in \text{H}_0^2(\Gamma)
\end{eqnarray*}
so the coercivity of the potential holds by combining the previous arguments for $\kappa = \gamma' /(2\lambda_1^2)<1/2$. The boundedness on bounded sets in $H_0^2(\Gamma)$ of potential can be proved easily by the means of the Sobolev imbedding Theorem, which gives us that the Kirchhoff model satisfies the assumption $(A3)$.\\ \\
\textbf{The von K\'{a}rm\'{a}n model}: Here, the nonlinear elastic force takes form
\begin{eqnarray*}
\mathcal{F}(\eta) = -[\eta, v(\eta)+F_0] - h
\end{eqnarray*}
where $F_0 \in H^4(\Gamma)$ and $h \in L^2(\Gamma)$ are given functions, and the von Karman bracket is defined as
\begin{eqnarray*}
[w,u] = \partial_{x_1}^2 w \cdot \partial_{x_2}^2 u+ \partial_{x_2}^2 w \cdot \partial_{x_1}^2 u - 2 \cdot \partial_{x_1 x_2}^2 w \cdot \partial_{x_1 x_2}^2 u, 
\end{eqnarray*}
where $v = v(\eta)$ satisfies the equation
\begin{eqnarray*}
\Delta v+[\eta,\eta] = 0 \text{ in } \Gamma, \quad \partial_\nu v = v =0 \text{ on } \partial\Gamma.
\end{eqnarray*}
The potential is given as
\begin{eqnarray*}
\Pi(\eta) = \frac{1}{4} \int_{\Gamma} \big[ |v(\eta)|^2 - 2([\eta,F_0] -2h)\eta \big].
\end{eqnarray*}
One can estimate the difference
\begin{eqnarray*}
|| [\eta_1 v(\eta_1)]-[\eta_2,v(\eta_2) ||_{H^{-\theta}(\Gamma)} \leq  C || \eta_1 - \eta_2||_{H^{2-\theta}(\Gamma)} (||\eta_1||_{H^2(\Gamma)}^s + ||\eta_2||_{H^2(\Gamma)}^s)
\end{eqnarray*}
for any $\theta \in [0,1]$ (see \cite[\text{Corollary 1.4.5}]{karmanplates}), so assumptions $(A1), (A2)$ hold for $a=0$ and $\varepsilon = 1$. The proof that the potential $\Pi(\eta)$ satisfies the assumption $(A3)$ can be found in $\cite[\text{Chapter 4}]{karmanplates}$. \\ \\
\textbf{The Berger model}: In this case, the nonlinear elastic force takes form
\begin{eqnarray*}
\mathcal{F}(\eta) =-\Big[ \nu \int_{\Gamma} |\nabla \eta|^2 dx - G  \Big] \Delta \eta - h,
\end{eqnarray*}
where $\nu >0$ and $G \in \mathbb{R}$ are parameters, $h\in L^2(\Gamma)$. The corresponding potential is 
\begin{eqnarray*}
\Pi(\eta) = \frac{\nu}{4} \Big[\int_\Gamma |\nabla \eta|^2 \Big]^2 - \frac{G}{2}\int_\Gamma |\nabla \eta|^2  - \int_\Gamma \eta h
\end{eqnarray*}
It is straightforward to prove that the assumptions $(A1),(A2)$ hold for say $a=0$ and $\varepsilon =1$. The coercivity of the potential can be proved by using the identity $x^2 \leq \delta x^4 + \frac{1}{4\delta}$, for any $\delta >0$, while the boundedness on the bounded sets in $H_0^2(\Gamma)$ follows easily, so the assumption $(A3)$ holds. For more details about this model, see $\cite[\text{Chapter 7}]{berger}$.\\ \\
\textbf{Other nonlinear models:} In \cite{BorSunNonLinear}, a fluid-structure interaction problem was studied between an incompressible, viscous fluid and a semilinear cylindrical Koiter membrane shell with inertia and a proof of  existence of a weak solution was given. We can easily prove that the  nonlinear elastic force in the structure equation in this model, due to its nice polynomial form (see \cite[equation (2.13)]{BorSunNonLinear}), satisfies the assumptions $(A1),(A2)$ for say $a=0$ and $\varepsilon = 1$, while its potential given in the equality \cite[equality (2.5)]{BorSunNonLinear} is equivalent to the $W^{1,4}$ norm (see \cite[section 3.1]{BorSunNonLinear}), so the assumption $(A3)$ follows immediately.\\

All the models mentioned above satisfy the assumption $(A2)$ for either $a=0$ or any $a>0$, but it is not clear if there are other relevant physical models that satisfy this assumption only for say some $a=2-\delta$ for a small $\delta >0$ (and at the same time do not satisfy it for some small $a \geq 0$). The freedom for choosing the parameter $a$ was left in the assumption since it makes the result more general but does not affect the proof of the main result in any way.

\subsection*{Appendix B}

\begin{lem}\label{extth}{\rm{(Extension by zero to $\Omega^M$)}}
Let $\eta \in C^{0,\alpha}(\Gamma)$ with $\alpha \in (0,1]$ be given such that $-1<C_1<\eta(t,X)<M(X)$, for all $X \in \Gamma$, where $M(X)$ is defined in $\eqref{functionM}$, and let $0<s\leq1$ and $s'=\frac{\alpha(3+2s)-3}{2}$. Let $A_{\eta}$ be extended by the mapping $A_{\eta}^{b} : \Omega^{b} \to \Omega^{b}$, where $\Omega^{b} = \Gamma \times (-1,b)$ and $b:=\max\limits_{X \in \Gamma}M(X)+1$, in the following way:
\begin{eqnarray*}
A_{\eta}^{b}(t,X,z) = \begin{cases}
A_\eta(t,X,z), \text{ if } (X,z) \in \Omega, \\ 
(0, \ddfrac{z}{b}(b-\eta(t,X))+(X,\eta(t,X)), \text{ elsewhere} 
\end{cases}
\end{eqnarray*}
We have the following results:
\begin{enumerate}
\item[(1)] The mapping $f \mapsto f \circ (A_{\eta}^b)^{-1}=:\hat{f}$ is continous from $W^{s,p}(\Omega^b)$ to $W^{s',p}(\Omega^b)$, where the continuity constant only depends on $||\eta(X)||_{C^{0,\alpha}(\Omega^m)}$, the size of $\Omega^b$ and lower bound of the Jacobian of the transformation $A_\eta^b$ (which can be chosen as $\min \{ C_1+1,1\}$).
\item[(2)] The extension by zero of $\tilde{f}_{|\Omega} \circ A_{\eta}^{-1}$ to $\Omega^M$ (function $(\tilde{f} \circ (A_{\eta}^b)^{-1})_{|\Omega^M}$) satisfies
\begin{eqnarray*}
||(\tilde{f} \circ (A_{\eta}^b)^{-1})_{|\Omega^M}||_{H^{s'}(\Omega^M)} \leq C||\tilde{f}||_{H^{s}(\Omega^M)} \leq C ||f||_{H^{s}(\Omega)}.
\end{eqnarray*}
\end{enumerate}
\end{lem}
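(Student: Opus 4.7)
The plan is to prove (1) via a bi-H\"older estimate for $A_\eta^b$ combined with the change-of-variables formula for the Gagliardo seminorm, and then to deduce (2) as a short corollary using the compatibility $A_\eta^b|_\Omega = A_\eta$ together with the structural observation that $A_\eta^b$ preserves the decomposition $\Omega^b = \Omega \cup (\Omega^b\setminus\Omega)$.

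For the bi-H\"older step, I write $A_\eta^b(X,z) = (X, \phi(X,z))$; on $\Omega$ we have $\phi(X,z) = (z+1)\eta(X)+z$, so
$$\phi(X_1,z_1)-\phi(X_2,z_2) = (1+\eta(X_1))(z_1-z_2) + (z_2+1)(\eta(X_1)-\eta(X_2)),$$
and an analogous affine-in-$z$ identity holds on $\Omega^b\setminus\Omega$ with $\phi(X,z) = \tfrac{z}{b}(b-\eta(X))+\eta(X)$. The upper bound $|A_\eta^b(x_1)-A_\eta^b(x_2)| \leq C|x_1-x_2|^\alpha$ is immediate from these identities and $\eta \in C^{0,\alpha}(\Gamma)$. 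For the lower bound I perform a dichotomy: if $c_1|z_1-z_2| \geq 2C_\eta|X_1-X_2|^\alpha$ with $c_1 = \inf(1+\eta)$ and $C_\eta = \|\eta\|_{C^{0,\alpha}}$, then the identity yields $|A_\eta^b(x_1)-A_\eta^b(x_2)| \geq c|z_1-z_2| \geq c'|x_1-x_2|$; otherwise $|z_1-z_2| \leq C|X_1-X_2|^\alpha$, which on the bounded domain $\Omega^b$ gives $|x_1-x_2| \leq C'|X_1-X_2|^\alpha$, and since the $X$-component of $A_\eta^b$ is the identity, $|A_\eta^b(x_1)-A_\eta^b(x_2)| \geq |X_1-X_2| \geq c|x_1-x_2|^{1/\alpha}$. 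In both cases $|A_\eta^b(x_1)-A_\eta^b(x_2)| \geq c|x_1-x_2|^{1/\alpha}$ uniformly, i.e.\ $(A_\eta^b)^{-1} \in C^{0,\alpha}(\Omega^b)$.

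With this in hand I change variables $y_i = A_\eta^b(x_i)$ in the Gagliardo seminorm and use the Jacobian lower bound $\min\{C_1+1,1\}$:
$$[\hat f]^p_{W^{s',p}(\Omega^b)} = \int\!\!\int \frac{|\hat f(y_1)-\hat f(y_2)|^p}{|y_1-y_2|^{d+s'p}}\,dy_1 dy_2 \leq C\int\!\!\int \frac{|f(x_1)-f(x_2)|^p}{|x_1-x_2|^{(d+s'p)/\alpha}}\,dx_1 dx_2.$$
Matching $(d+s'p)/\alpha = d+sp$ gives $s'p = \alpha(d+sp)-d$, which for $d=3$, $p=2$ is exactly $s' = (\alpha(3+2s)-3)/2$; the $L^p$ bound is immediate from the bounded Jacobian, finishing (1). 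For (2), since $A_\eta^b$ maps $\Omega$ onto $\Omega^\eta$ and $\Omega^b\setminus\Omega$ onto $\Omega^b\setminus\Omega^\eta$, whenever $\tilde f$ vanishes on $\Omega^b\setminus\Omega$ (for instance the zero extension of $f$ from $\Omega$) the function $\tilde f\circ(A_\eta^b)^{-1}$ vanishes on $\Omega^M\setminus\Omega^\eta$, so its restriction to $\Omega^M$ coincides with the zero-extension of $f\circ A_\eta^{-1}$ there; the first inequality then follows from (1) by restricting from $\Omega^b$ to $\Omega^M$, and the second from the standard boundedness of zero-extension $H^s(\Omega)\hookrightarrow H^s(\Omega^M)$ valid in the relevant range $s<1/2$ used later in the paper.

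The main obstacle is the lower H\"older bound: the inverse of a $C^{0,\alpha}$ homeomorphism is in general not H\"older, and extracting the sharp exponent $1/\alpha$ — which produces the precise value $s'=(\alpha(3+2s)-3)/2$ — genuinely relies on the product structure of $A_\eta^b$. The identity $X$-component furnishes the fallback estimate $|A_\eta^b(x_1)-A_\eta^b(x_2)| \geq |X_1-X_2|$ which, in the unfavorable case where $|z_1-z_2|$ is too small, absorbs the H\"older defect coming from the $z$-direction; without this structural feature one could only recover a strictly smaller value of $s'$.
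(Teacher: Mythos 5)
Your proof is correct, and on the crucial point it is more careful than the paper's own argument. Both you and the paper estimate the Gagliardo seminorm of $\hat f$ by changing variables, and both arrive at the same exponent balance $(3+2s')/\alpha = 3+2s$, which produces the stated value of $s'$. The difference is in \emph{which} H\"older bound on $A_\eta^b$ is invoked. After substituting $X = (A_\eta^b)^{-1}(x)$, $Y=(A_\eta^b)^{-1}(y)$, one has $\hat f(x)=f(X)$ and $|x-y| = |A_\eta^b(X)-A_\eta^b(Y)|$, so the factor to control is
$$\frac{|X-Y|^{3+ps}}{|A_\eta^b(X)-A_\eta^b(Y)|^{3+ps'}},$$
which demands a \emph{lower} bound $|A_\eta^b(X)-A_\eta^b(Y)|\geq c\,|X-Y|^{1/\alpha}$, i.e.\ H\"older continuity of $(A_\eta^b)^{-1}$, not of $A_\eta^b$. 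The paper bounds $\sup_{x,y}|A_\eta^b(x)-A_\eta^b(y)|^{3+ps}/|x-y|^{3+ps'}$ by $\|A_\eta^b\|_{C^{0,\alpha}}$ — that is the reciprocal ratio; together with the inconsistencies in the intermediate line (the $1/J^2$ factor and $(A_\eta^b)^{-1}$ appearing in the denominator), the paper's computation as written controls $\|f\circ A_\eta^b\|_{W^{s',p}}$ rather than $\|f\circ(A_\eta^b)^{-1}\|_{W^{s',p}}$ as the lemma claims (and as item (2) needs). Your dichotomy argument is exactly the missing ingredient: in the regime $c_1|z_1-z_2|\geq 2C_\eta|X_1-X_2|^\alpha$ the affine-in-$z$ identity gives a linear lower bound, and in the complementary regime the identity in the $X$-component plus boundedness of $\Omega^b$ gives $|A_\eta^b(x_1)-A_\eta^b(x_2)|\geq |X_1-X_2|\geq c|x_1-x_2|^{1/\alpha}$. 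This genuinely uses the product structure $(X,z)\mapsto (X,\phi(X,z))$ and would fail for a general $C^{0,\alpha}$ homeomorphism. One minor remark: you (echoing the lemma statement) invoke the Jacobian \emph{lower} bound, but for $\hat f = f\circ(A_\eta^b)^{-1}$ the change-of-variables factor that appears is $J_{A_\eta^b}(X)J_{A_\eta^b}(Y)$ and what is needed is its \emph{upper} bound; both bounds hold, so this is harmless. Your treatment of item (2) as a short corollary matches the paper's.
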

\begin{proof}
Recall the following intrinsic seminorm (see \cite{adams})
\begin{eqnarray*}
|f|_{W^{s,p}(\Omega^b)}^p = \int_{\Omega^b} \int_{\Omega^b}\frac{|f(x) - f(y)|^p}{|x-y|^{3+sp}} dx dy <\infty.
\end{eqnarray*}
Now, we prove $(1)$ in the following way
\begin{eqnarray*}
|\hat{f}|_{W^{s',p}(\Omega^b)}^p &=&  \int_{\Omega^b} \int_{\Omega^b}\frac{|\hat{f}(x) - \hat{f}(y)|^p}{|x-y|^{3+ps'}} dx dy =  \int_{\Omega^b} \int_{\Omega^b}\frac{|f(X) - f(Y)|^p}{|(A_{\eta}^b)^{-1}(X)-(A_{\eta}^b)^{-1}(Y)|^{3+ps'}} \frac{1}{J^2} dX dY \\
 &= & \int_{\Omega^b} \int_{\Omega^b}\frac{|f(X) - f(Y)|^p}{|X-Y|^{3+ps}} \frac{|X-Y|^{3+ps}}{{|(A_{\eta}^b)^{-1}(X)-(A_{\eta}^b)^{-1}(Y)|^{3+ps'}}}       \frac{1}{J^2} dXdY \\
&\leq&  \int_{\Omega^b} \int_{\Omega^b}\frac{|f(X) - f(Y)|^p}{|X-Y|^{3+ps}} \frac{1}{J^2}dX dY  \sup\limits_{x,y \in \Omega^b}  \frac{|A_\eta^b(x)-A_\eta^b(y)|^{3+ps}}{{|x-y|^{3+ps'}}}  \\
&=&C(\Omega^b,C_1)|f|_{W^{s,p}(\Omega^b)}^p  ||A_\eta^b||_{C^{0,\alpha}(\Omega^b)} \leq C\Big(C_1, ||\eta(X)||_{C^{0,\alpha}(\Omega^b)}, \Omega^b\Big) |f|_{W^{s,p}(\Omega^b)}^p 
\end{eqnarray*}
To prove the statement $(2)$, we use the inequality \cite[\text{Theorem} 1.2.16]{traces}, which gives us that the extension of $f$ by $0$ to $\mathbb{R}^3$, denoted by $\tilde{f}$, satisfies
\begin{eqnarray*}
||\tilde{f}||_{H^{s}(\mathbb{R}^3)} \leq C(\partial\Omega) ||f||_{H^{s}(\Omega)},
\end{eqnarray*}
where $C(\partial\Omega)$ depends only on the Lipschitz constant of $\partial\Omega$, so the statement $(2)$ follows by previous inequality and the statement $(1)$.
\end{proof}
\begin{cor}
Let $\eta \in H_0^2(\Gamma)$ be such that $-1<C_1<\eta(t,X)<M(X)$, for all $X \in \Gamma$, where $M(X)$ is defined in $\eqref{functionM}$, and let $f\in H^1(\Omega^\eta)$. Then, extension by $0$ of function $f$ to $\Omega^M$ is continuous from $H^1(\Omega^\eta)$ to $H^{s'}(\Omega^M)$, for any $0<s' < 1/2$. The continuity constant only depends on $||\eta||_{H^2(\Gamma)}$ and $s'$, and the extension of a divergence-free function is still divergence free.
\end{cor}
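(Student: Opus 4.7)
The plan is to cast this Corollary as a direct application of Lemma \ref{extth} by passing through the LE mapping. Since $\Gamma \subset \mathbb{R}^2$, the Sobolev embedding gives $H_0^2(\Gamma) \hookrightarrow C^{0,\alpha}(\overline{\Gamma})$ for every $\alpha \in (0,1)$, with norm controlled by $\|\eta\|_{H^2(\Gamma)}$. Hence the Hölder regularity demanded by Lemma \ref{extth} is available for any $\alpha$ strictly below $1$, with constants depending only on $\|\eta\|_{H^2(\Gamma)}$. Throughout, $C_1$ is the lower bound on $1+\eta$.

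First I would pull $f$ back to the fixed domain by setting $g := f \circ A_\eta$ on $\Omega$. Repeating the Gagliardo seminorm computation from the proof of Lemma \ref{extth}(1), but with $A_\eta : \Omega \to \Omega^\eta$ in place of $A_\eta^b$ and with the substitution $X' = A_\eta(X)$ (Jacobian $1+\eta$, bounded above and below), one obtains
\[
\|g\|_{H^{s_1}(\Omega)} \leq C\,\|f\|_{H^1(\Omega^\eta)}, \qquad s_1 := \tfrac{5\alpha - 3}{2},
\]
where the constant depends on $\|\eta\|_{C^{0,\alpha}(\Gamma)}$ and on $C_1$. The bound comes from $|A_\eta(x)-A_\eta(y)|^5 \leq C|x-y|^{5\alpha}$, which forces $3+2s_1 \leq 5\alpha$. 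Choosing $\alpha$ close to $1$ makes $s_1$ close to $1$.

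Next, fix a target $s' \in (0, 1/2)$ and choose auxiliary parameters $\alpha < 1$ and $s < 1/2$, with $s \leq s_1$, close enough to $1$ and $1/2$ respectively so that the output exponent
\[
s_2 := \tfrac{\alpha(3+2s)-3}{2}
\]
equals $s'$; this is possible since $s_2 \to \tfrac12$ as $(\alpha, s) \to (1, \tfrac12)$. Extend $g$ by zero from $\Omega$ to $\Omega^b$, obtaining $\tilde g$. Because $s < 1/2$, zero extension is bounded on $H^s$ with no trace condition, so $\|\tilde g\|_{H^s(\Omega^b)} \leq C\|g\|_{H^s(\Omega)}$. Now Lemma \ref{extth}(1) applies directly to $\tilde g$ and yields $\|\tilde g \circ (A_\eta^b)^{-1}\|_{H^{s_2}(\Omega^b)} \leq C\|\tilde g\|_{H^s(\Omega^b)}$. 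It then remains to verify that $F := \tilde g \circ (A_\eta^b)^{-1}$ restricted to $\Omega^M$ is the zero extension of $f$: on $\Omega^\eta$, $F = (f \circ A_\eta) \circ A_\eta^{-1} = f$; on $\Omega^M \setminus \Omega^\eta$, $(A_\eta^b)^{-1}$ maps into $\Omega^b \setminus \Omega$, where $\tilde g \equiv 0$, so $F = 0$ there. Chaining the four inequalities gives $\|F\|_{H^{s'}(\Omega^M)} \leq C(\|\eta\|_{H^2(\Gamma)}, s')\,\|f\|_{H^1(\Omega^\eta)}$. For the divergence-free statement: if $\nabla\cdot f = 0$ in $\Omega^\eta$ and the normal trace on the gluing interface matches (which is automatic in the applications to $\mathcal{V}_F$-type test functions), then the zero extension has no distributional surface divergence, so $F$ is divergence-free in $\mathcal{D}'(\Omega^M)$.

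The one nontrivial step is the pullback estimate $f \circ A_\eta \in H^{s_1}(\Omega)$: since $A_\eta$ is merely Hölder, no standard chain rule is available and the fractional seminorm must be bounded by hand. The argument of Lemma \ref{extth}(1) is, however, essentially symmetric under swapping $A_\eta^b \leftrightarrow (A_\eta^b)^{-1}$ and uses only Hölder regularity together with the positive Jacobian bound, so it transfers verbatim. Everything else is bookkeeping of exponents.
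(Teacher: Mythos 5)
Your proposal is correct and follows essentially the same route as the paper: pull back by $A_\eta$ using a Gagliardo-seminorm argument mirroring Lemma B.1(1), apply the zero-extension theorem on the fixed domain (Lemma B.1(2) / \cite[Theorem 1.2.16]{traces}), and push forward by $(A_\eta^b)^{-1}$. The paper compresses this into two sentences whereas you track the exponents $s_1=(5\alpha-3)/2$ and $s_2=(\alpha(3+2s)-3)/2$ explicitly; the only cosmetic point is that you do not need $s_2 = s'$ exactly, only $s_2 \ge s'$, which is easier to arrange and avoids the mild constraint $s\le s_1$ for small $s'$.
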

\begin{proof}
By Sobolev embedding, $\eta \in C^{0,\alpha}(\Gamma)$, for any $\alpha<1$, so by Lemma $\ref{extth}$ $(1)$, $f \circ A_\eta \in H^m(\Omega)$, for any $m<1$ (for example, one can obtain the same continuity result from $H^{1}(\Omega^\eta)$ to $H^{m}(\Omega)$ as in Lemma $\ref{extth}$ $(1)$ by using the mapping $A_\eta$ and the domains $\Omega^\eta$ and $\Omega$). Now, by Lemma $\ref{extth}$ $(2)$, the claim follows.
\end{proof}


\vspace{.1in}
\noindent{\bf Acknowledgments:} This research was partially supported by National Natural Science Foundation of China (NNSFC) under Grant Nos. 11631008 and 11826019. The authors would like to express their graditude to the referees for carefully reading the manuscript and for the useful suggestions and insightful comments which helped significantly in improving the quality and the clarity of this paper.

  \end{document}